\author{Yakov Berchenko-Kogan and Evan S. Gawlik}
\title[Blow-up Whitney forms]{Blow-up Whitney forms, shadow forms, and Poisson processes}
\newtheorem{theorem}{Theorem}[section]
\newtheorem{proposition}[theorem]{Proposition}
\newtheorem{corollary}[theorem]{Corollary}
\newtheorem{lemma}[theorem]{Lemma}
\newtheorem{conjecture}[theorem]{Conjecture}
\theoremstyle{definition}
\newtheorem{definition}[theorem]{Definition}
\newtheorem{notation}[theorem]{Notation}
\newtheorem{example}[theorem]{Example}
\newtheorem{remark}[theorem]{Remark}
\newtheorem{intuition}[theorem]{Intuition}
\newcommand\abs[1]{\left\lvert{#1}\right\rvert}
\newcommand\pp[2][]{\frac{\partial{#1}}{\partial{#2}}}
\renewcommand\d{\mathop{}\!d}
\newcommand\bR{\mathbb R}
\newcommand\mr{\mathring}
\newcommand\brho{{\boldsymbol\rho}}
\newcommand\btheta{{\boldsymbol\theta}}
\newcommand\blambda{{\boldsymbol\lambda}}
\newcommand\bl{\boldsymbol{l}}
\newcommand\br{\boldsymbol{r}}
\newcommand\rr[1]{\lambda_{#1}}
\newcommand\tl\tilde
\newcommand\cP{\mathcal P}
\renewcommand\epsilon\varepsilon
\renewcommand\phi\varphi
\DeclareMathOperator\id{id}
\DeclareMathOperator\Lk{Lk}
\colorlet{darkgreen}{green!50!black}
\begin{document}

\maketitle

\begin{abstract}
  The Whitney forms on a simplex $T$ admit high-order generalizations that have received a great deal of attention in numerical analysis.  Less well-known are the \emph{shadow forms} of Brasselet, Goresky, and MacPherson. These forms generalize the Whitney forms, but have rational coefficients, allowing singularities near the faces of $T$. Motivated by numerical problems that exhibit these kinds of singularities, we introduce degrees of freedom for the shadow $k$-forms that are well-suited for finite element implementations. In particular, we show that the degrees of freedom for the shadow forms are given by integration over the $k$-dimensional faces of the \emph{blow-up} $\tilde T$ of the simplex $T$. Consequently, we obtain an isomorphism between the cohomology of the complex of shadow forms and the cellular cohomology of $\tilde T$, which vanishes except in degree zero. Additionally, we discover a surprising probabilistic interpretation of shadow forms in terms of Poisson processes. This perspective simplifies several proofs and gives a way of computing bases for the shadow forms using a straightforward combinatorial calculation.  
\end{abstract}

\section{Introduction}

Since their introduction in the 1950s, the Whitney forms~\cite{whitney1957geometric} have had widespread impact on geometry, topology, and computational mathematics~\cite{dodziuk1978riemannian,muller1978analytic,bossavit1988whitney}.  These differential $k$-forms are piecewise affine forms on a simplicial triangulation and are in duality with integration over $k$-chains. As such, Whitney forms serve as the simplest example of a finite element space of differential forms arising in finite element exterior calculus~\cite{arnold2006finite,arnold2010finite}, a framework for analyzing finite element methods for partial differential equations.  They admit piecewise polynomial generalizations~\cite{raviart2006mixed,nedelec1980mixed,hiptmair2001higher,rapetti2009whitney} that are known to be well-suited for discretizing the Hodge Laplacian~\cite{arnold2006finite,arnold2010finite}. 

In this paper, we study another generalization of the Whitney forms introduced by Brasselet, Goresky, and MacPherson~\cite{brasselet1991simplicial}: the so-called \emph{shadow forms}.  These differential $k$-forms on an $n$-simplex $T$ have rational, as opposed to polynomial, coefficients, and they exhibit singular behavior near the boundary of $T$.  Taken together, the spaces of shadow $k$-forms of degree $k=0,1,\dots,n$ form an exact sequence that contains the classical Whitney forms as a subcomplex.  As we argue below, the shadow forms appear to be well-suited for constructing certain novel finite element spaces, like tangentially- and normally-continuous vector fields on triangulated surfaces.

We also remark here that, although the shadow forms are integrable, not all of them are square-integrable, so the complex of shadow forms is a \emph{Banach} complex, not a Hilbert complex. While analytic results are outside the scope of this paper, we feel that it is important to note that the problem of $L^p$ cohomology for $p\neq2$ received attention in Brasselet, Goresky, and MacPherson's paper \cite{brasselet1991simplicial}, as well as more recently; see for example \cite{guillaume2021}. It is likely that these analytic tools will be key to the eventual convergence analysis for these new finite element spaces. Perhaps they may even yield more direct proofs of existing finite element exterior calculus theorems that currently rely on smoothing, such as the bounded projection operators \cite{arnold2006finite}.

For reasons that we explain below, we will refer to the shadow $k$-forms as \emph{blow-up Whitney $k$-forms} in this paper.  This change in nomenclature highlights an important perspective we adopt when constructing degrees of freedom for this space and when proving exactness.  Following the notational conventions of~\cite{arnold2006finite,arnold2010finite}, we denote the space of blow-up Whitney $k$-forms on $T$ by $b\mathcal{P}_1^-\Lambda^k(T)$ to emphasize that it is a superspace of $\mathcal{P}_1^-\Lambda^k(T)$, the space of classical Whitney $k$-forms on $T$.

We make three main contributions.  First, we introduce a dual basis for the blow-up Whitney $k$-forms that is well-suited for finite element implementations.  The members of this dual basis are given by integration over $k$-dimensional faces of the \emph{blow-up} of $T$.  Briefly, the blow-up of $T$ is the manifold obtained from $T$ by blowing up its subsimplices in the sense of~\cite{gr01,melrose1996differential}, which is similar to the notion of blow-up in algebraic geometry.  For example, in two dimensions, we blow up the vertices of a triangle $T$ to obtain its blow-up $\tilde T$; the result is illustrated in Figure~\ref{fig:blowuptri}: It has 6 faces of dimension 1 that one can loosely think of as the 3 edges of $T$ and 3 infinitesimal arcs at the vertices of $T$. Meanwhile, in three dimensions, to obtain the blow-up $\widetilde{T}$ of a tetrahedron $T$, we first blow up its vertices, and then we blow up its edges; the result is illustrated in Figure~\ref{fig:blowuptet}.

Our second main contribution is to relate the definition of the blow-up Whitney forms, which is classically given by an integral formula, to a certain probability associated with Poisson processes.  This link with Poisson processes allows us to do several things.  It allows us to write down explicit formulas for the blow-up Whitney $k$-forms in any dimension $n$ using a straightforward combinatorial calculation.  (See~\cite{bendiffalah1995shadow} for a different explicit formula involving derivatives of a rational function of barycentric coordinates.)  It also leads to a simple proof that the classical Whitney $k$-forms are contained in the space of blow-up Whitney $k$-forms.  And it underpins our proof that the exterior derivative sends the blow-up Whitney $k$-forms to blow-up Whitney $(k+1)$-forms.  Although the latter two results are classical, we include proofs to highlight the utility of the Poisson process perspective. 

Our third main contribution is to give a new proof that the sequence of blow-up Whitney forms on a simplex $T$ is exact using the blow-up construction discussed above.  Our dual basis plays a key role in this proof, allowing us to relate the cohomology of the complex of blow-up Whitney forms to the cellular cohomology of the blow-up of $T$.

We believe that the blow-up construction and the Poisson process perspective will end up playing a role in developing higher-order generalizations of the blow-up Whitney forms. The blow-up construction suggests a natural choice for degrees of freedom in the higher-order setting: one considers higher-order moments of the form over faces of the blow-up of $T$.  Meanwhile, the Poisson process perspective provides ways of generating basis forms of higher degree.  In the lowest-order setting considered here, one generates basis forms by asking for the probability of observing a certain sequence of arrival times for particles produced by an ensemble of Poisson processes whose rates are related to the barycentric coordinates of a point in $T$.  We expect that this construction admits a generalization with more particles that would generate higher-order basis forms. At the end of this paper, we present some preliminary results on the higher-order spaces for blow-up scalar fields, but the full story of higher-order blow-up $k$-forms is still to be realized.

\subsection*{Motivation.} Our motivation for developing the blow-up forms comes from two sources.  First, there is growing evidence that tensor fields that exhibit singular behavior play an important role in the study of triangulated manifolds equipped with nonsmooth Riemannian metrics.  Specifically, when the metric is piecewise smooth and possesses single-valued tangential-tangential components along codimension-1 faces, various curvatures like the scalar curvature and the Riemann curvature tensor can be given meaning in distributional sense, often on the basis of heuristic arguments~\cite{strichartz2020defining,berchenko2023finite,gopalakrishnan2022analysis,gawlik2023finite,gawlik2023finite2,gopalakrishnan2023analysis}.  (See, however,~\cite{christiansen2024definition} for a systematic treatment of distributional scalar curvature in the piecewise flat setting.)  It turns out that one can arrive at these definitions in a systematic way (in any dimension, piecewise flat or not) by considering the action of the squared covariant exterior derivative, interpreted in a distributional sense, on vector fields with appropriate regularity.  With the right regularity hypotheses, one can perform an integration by parts calculation involving the covariant exterior derivative to arrive at the definition of distributional Riemann curvature proposed in~\cite{gopalakrishnan2023analysis} (various traces of which yield other distributional curvatures).  This integration by parts calculation takes place on the blow-up $\widetilde{T}$ of each $n$-simplex $T$, leading to integrals over components of $\partial\widetilde{T}$, just like the degrees of freedom for the blow-up forms.  The details behind this calculation are a subject of our ongoing work.

\subsection*{More motivation.} Our second source of motivation comes from a desire to construct a discretization of the vector Laplacian on triangulated surfaces that overcomes the following well-known difficulty: any piecewise smooth vector field tangent to a triangulated surface that has single-valued tangential and normal components along edges must vanish at the vertices, except in degenerate situations, like when the triangles meeting at a given vertex lie in a common plane~\cite{demlow2023tangential}.  This peculiarity is a well-known obstruction to constructing a conforming finite element method for the surface vector Laplacian, a differential operator that has received considerable attention in numerical analysis~\cite{demlow2023tangential,bonito2020divergence,olshanskii2022tangential,jankuhn2021error,lederer2020divergence,reusken2022analysis,hansbo2020analysis} owing to its role in the dynamics of fluids on surfaces~\cite{azencot2015functional}.   This operator differs from the Hodge Laplacian on one-forms, which can be discretized with standard spaces from finite element exterior calculus~\cite{arnold2006finite,arnold2010finite}. The obstruction described above is intimately related to the fact that the angles incident at a vertex $P$ on a triangulated surface generally do not sum to $2\pi$; instead, their sum generally deviates from $2\pi$ by some ``angle defect'' $\Theta \neq 0$.  Hence, any vector field $v$ with the aforementioned properties must experience a rotation by $\Theta$ under parallel transport around $P$, and this contradicts the continuity of the vector field unless it vanishes at $P$.  A seldom-used idea for sidestepping this obstruction is to relax the piecewise smoothness constraint on $v$: within each triangle incident at $P$, we allow $v$ to rapidly rotate in the vicinity of $P$, so that its value ``at'' $P$ differs along different rays emanating from $P$.  In other words, in polar coordinates $(r,\theta)$ centered at $P$, we allow $v$ to vary with $\theta$, even at $r=0$.  

\begin{figure}
  \centering
  \includegraphics[scale=0.4,clip=true,trim=0in 0in 2.2in 0in]{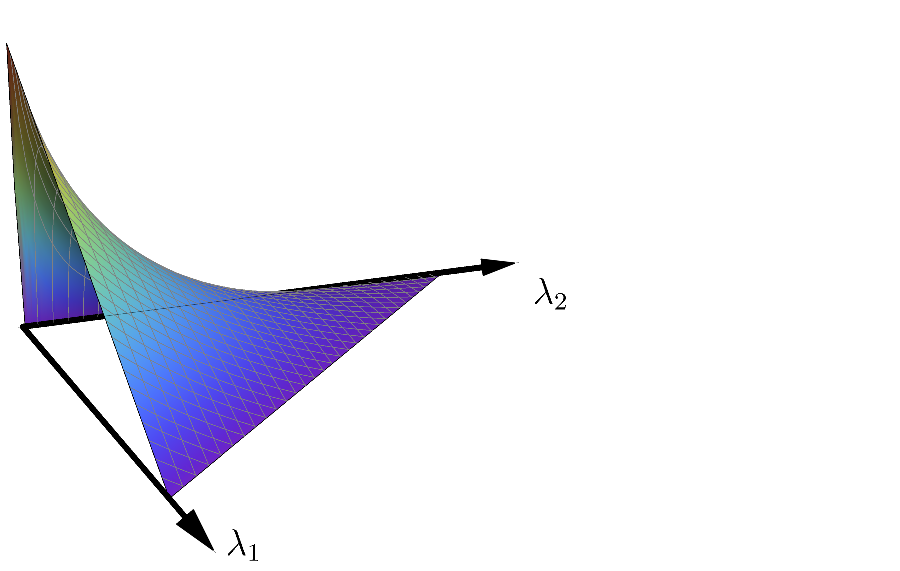}
  \includegraphics[scale=0.4,clip=true,trim=0in 0in 2.2in 0in]{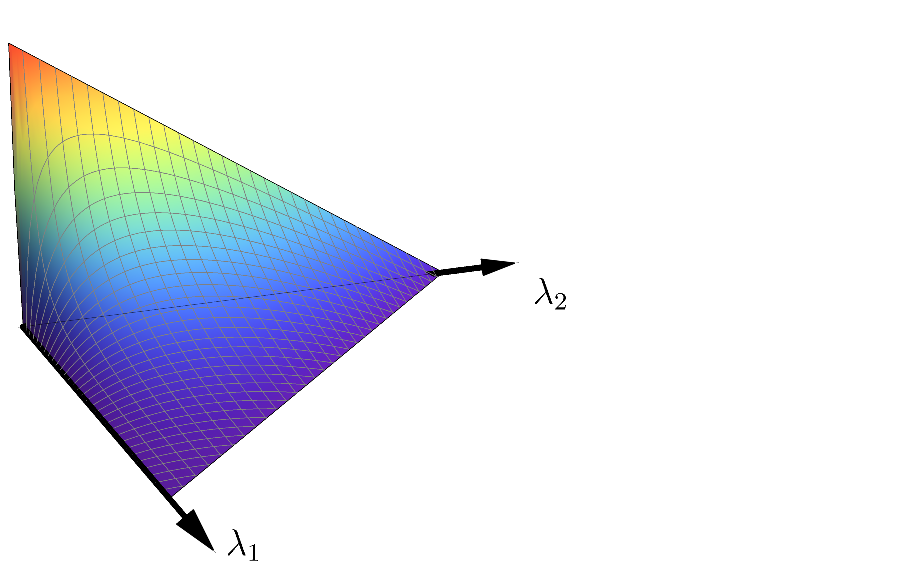}
  \includegraphics[scale=0.4,clip=true,trim=0in 0in 2.2in 0in]{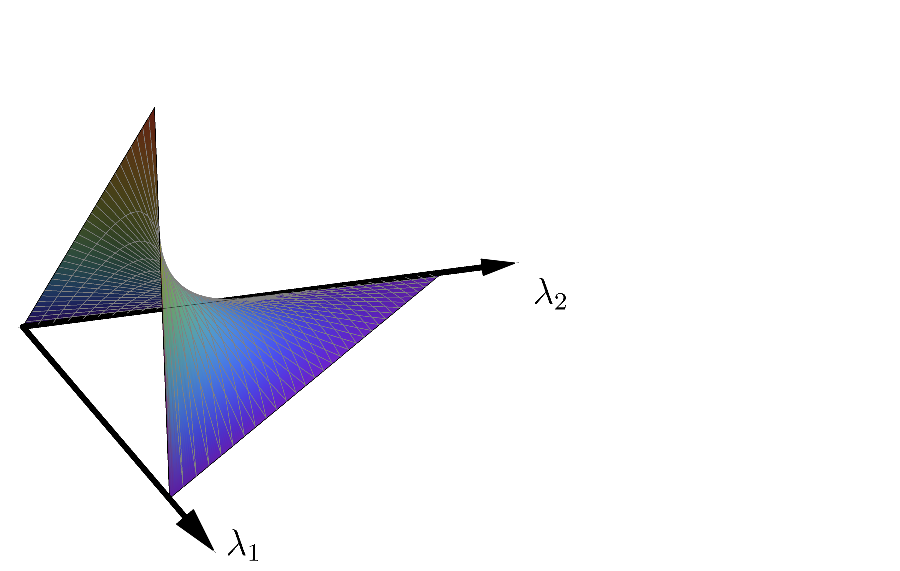} \\
  \includegraphics[scale=0.4,clip=true,trim=0in 0in 2.2in 0in]{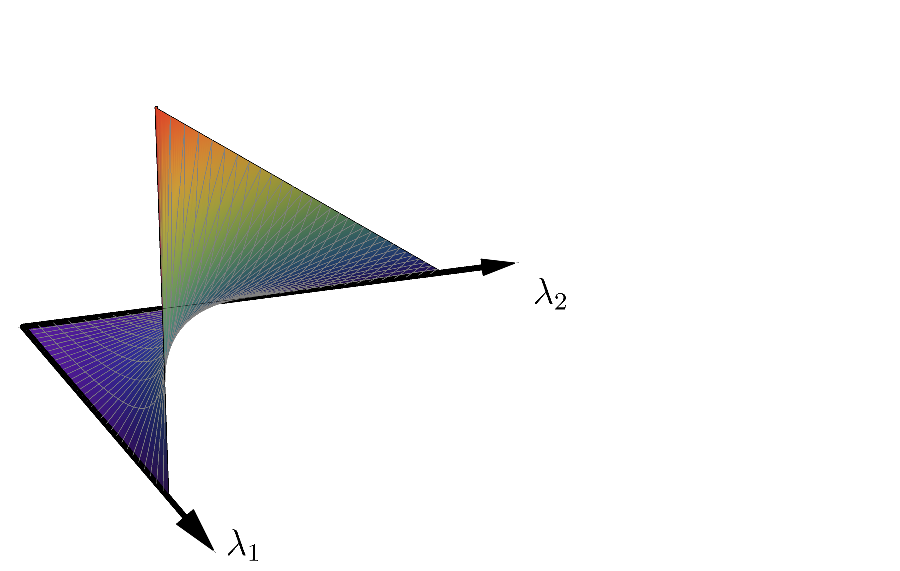}
  \includegraphics[scale=0.4,clip=true,trim=0in 0in 2.2in 0in]{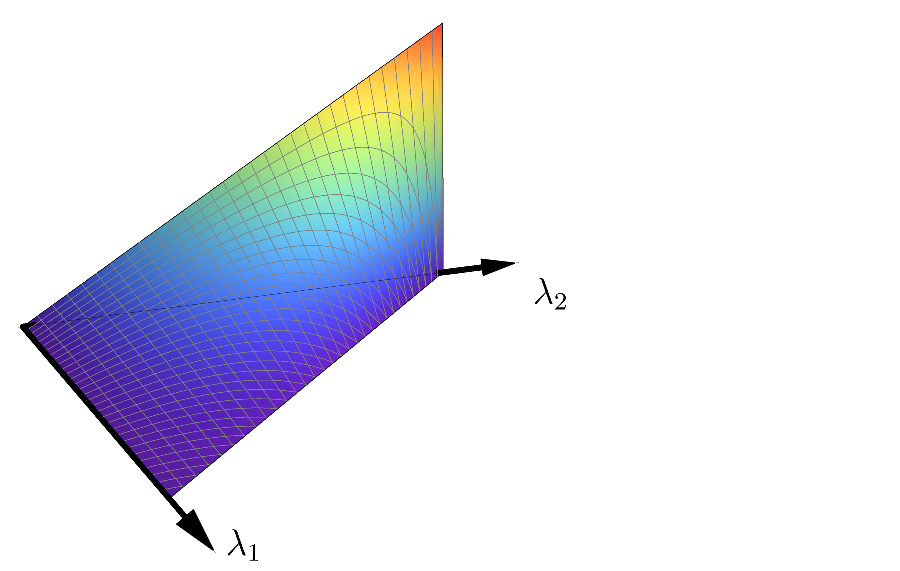}
  \includegraphics[scale=0.4,clip=true,trim=0in 0in 2.2in 0in]{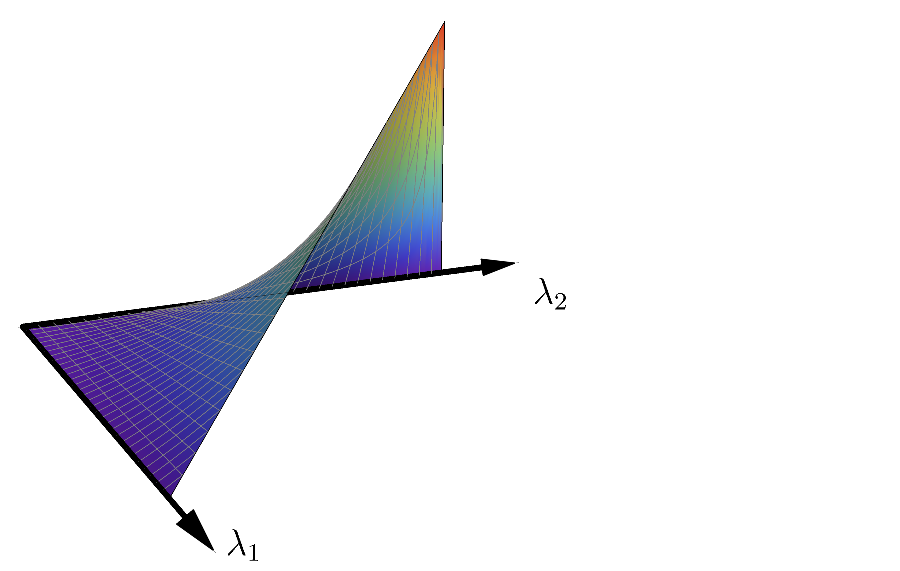} \\
  \caption{The 6 blow-up Whitney 0-forms in dimension $n=2$.  Top row: $\psi_{012},\psi_{021},\psi_{102}$.  Bottom row: $\psi_{120},\psi_{201},\psi_{210}$.}
  \label{fig:plot0form}
  \includegraphics[scale=0.5,clip=true,trim=0in 0in 0in 0in]{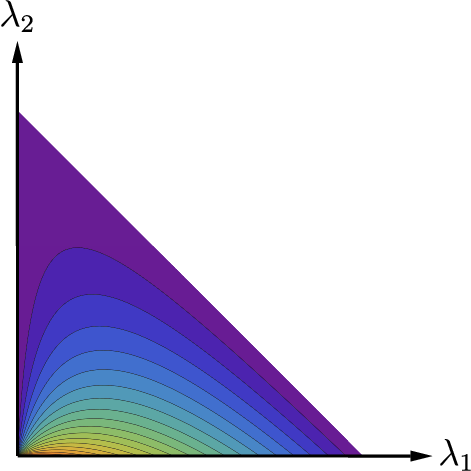}
  \includegraphics[scale=0.5,clip=true,trim=0in 0in 0in 0in]{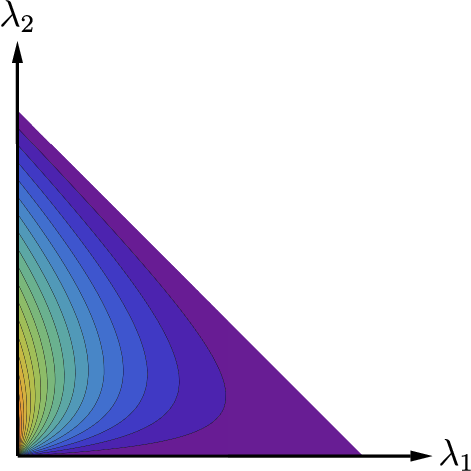}
  \includegraphics[scale=0.5,clip=true,trim=0in 0in 0in 0in]{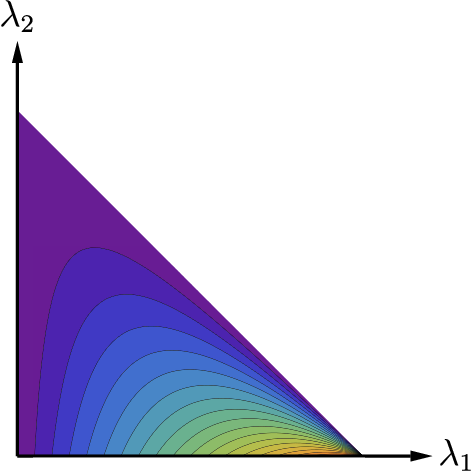} \\
  \includegraphics[scale=0.5,clip=true,trim=0in 0in 0in 0in]{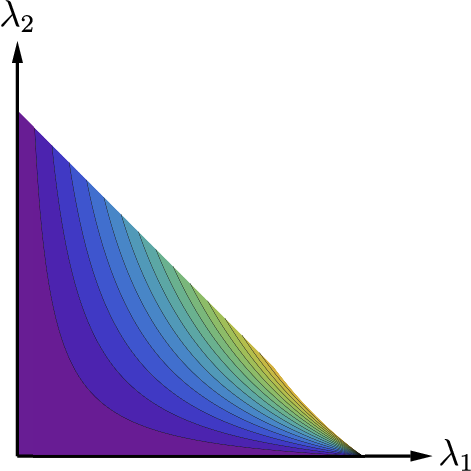}
  \includegraphics[scale=0.5,clip=true,trim=0in 0in 0in 0in]{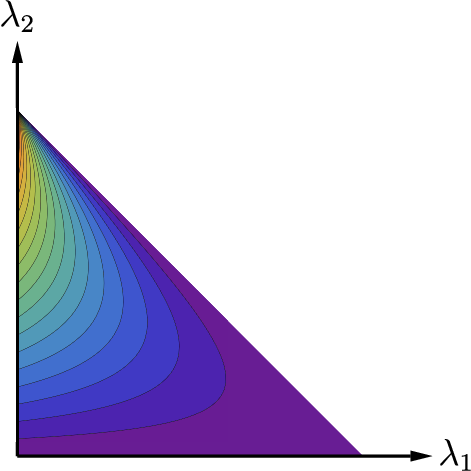}
  \includegraphics[scale=0.5,clip=true,trim=0in 0in 0in 0in]{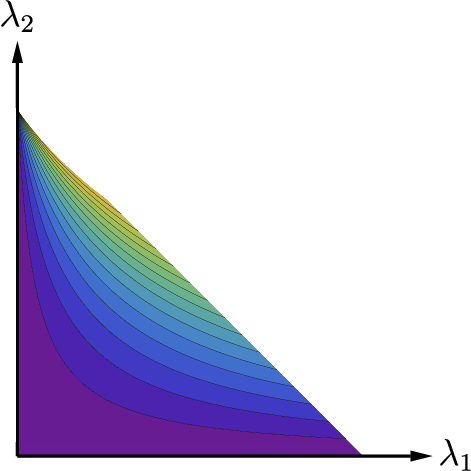}
  \caption{Contour plots of the same blow-up Whitney $0$-forms as above.}
\end{figure}

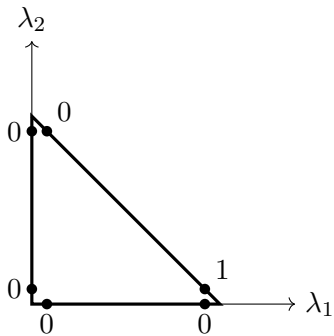
\begin{figure}
  \begin{center}
    \begin{tikzpicture}
      \draw[->] (0,0) -- (xyz cs:x=3.5) node[right] {$\lambda_1$};
      \draw[->] (0,0) -- (xyz cs:y=3.5) node[above] {$\lambda_2$};
      \draw[very thick] (0,0) node[label=$$]{$$}
      -- (2.5,0) node[anchor=north]{$$}
      -- (0,2.5) node[anchor=south]{$$}
      -- cycle;
      \fill[black] (0.2,0) circle (2pt) node[below] {0};
      \fill[black] (2.3,0) circle (2pt) node[below] {0}; 
      \fill[black] (2.3,0.2) circle (2pt) node[above right] {1};
      \fill[black] (0.2,2.3) circle (2pt) node[above right] {0}; 
      \fill[black] (0,2.3) circle (2pt) node[left] {0};
      \fill[black] (0,0.2) circle (2pt) node[left] {0}; 
    \end{tikzpicture}
  \end{center}
  \caption{Degrees of freedom for the function $\psi_{120} = \frac{\lambda_1 \lambda_2}{\lambda_2+\lambda_0} \in b\mathcal{P}_1^-\Lambda^0(T)$.  Each labelled point represents the limiting value of the function as one approaches the indicated vertex along the indicated edge.}
  \label{fig:dof}
\end{figure}

The intuition above motivates the following idea.  Let $T$ be a triangle with barycentric coordinates $\lambda_0,\lambda_1,\lambda_2$.  Consider the functions
\begin{align*}
  \psi_{012} &= \frac{\lambda_0 \lambda_1}{\lambda_1+\lambda_2}, & \psi_{021} &= \frac{\lambda_0 \lambda_2}{\lambda_2+\lambda_1}, & \psi_{102} &= \frac{\lambda_1 \lambda_0}{\lambda_0+\lambda_2}, \\ \psi_{120} &= \frac{\lambda_1 \lambda_2}{\lambda_2+\lambda_0}, & \psi_{201} &= \frac{\lambda_2 \lambda_0}{\lambda_0+\lambda_1}, & \psi_{210} &= \frac{\lambda_2 \lambda_1}{\lambda_1+\lambda_0}, 
\end{align*}
which are plotted in Figure~\ref{fig:plot0form}.  Each of these functions ``rapidly varies'' in the vicinity of one vertex.  For example, notice that the function $\psi_{120}$ vanishes on the edge $\lambda_1=0$, vanishes on the edge $\lambda_2=0$, and varies linearly along the edge $\lambda_0=0$: $\psi_{120}\big|_{\lambda_0=0} = \lambda_1$.  In particular, it is multivalued at vertex 1, in the sense that the limit of $\psi_{120}$ as we approach vertex 1 along the edge $\lambda_2=0$ is 0, but its limit as we approach vertex 1 along the edge $\lambda_0=0$ is 1.  In this sense, $\psi_{120}$ ``rapidly varies'' in the vicinity of vertex 1.  We think of $\psi_{120}$ as a basis function associated with a specific endpoint of a specific edge, namely vertex 1 of edge $(1,2)$ ($\lambda_0=0$); this is the location where $\psi_{120}$ evaluates to 1 (in a limiting sense).

The space spanned by the $\psi_{ijk}$'s is the space $b\mathcal{P}_1^-\Lambda^0(T)$ of blow-up Whitney 0-forms in dimension $n=2$. Note that it contains all affine functions, since 
\begin{equation} \label{affine}
  \psi_{012} + \psi_{021} = \lambda_0,\qquad \psi_{120}+\psi_{102} = \lambda_1,\qquad \psi_{201}+\psi_{210}=\lambda_2.
\end{equation}
Thus, $b\mathcal{P}_1^-\Lambda^0(T)$ is a superset of $\mathcal{P}_1^-\Lambda^0(T)$, the space of classical Whitney 0-forms.  The $b$ stands for ``blow-up'' and signals that these functions are not smooth on $T$, but rather they are smooth on the manifold obtained from $T$ by blowing up the vertices as we alluded to above.

To summarize: $b\mathcal{P}_1^-\Lambda^0(T)$ is a richer space than $\mathcal{P}_1^-\Lambda^0(T)$ that accommodates rapid variations near the vertices.  By considering a vector-valued version of this space, we obtain a space of vector fields that accommodates rapid rotations near the vertices, and by gluing these local spaces together appropriately on a triangulated surface $\mathcal{T}$, we obtain a global space that admits nontrivial vector fields that have single-valued normal and tangential components along edges.  Our numerical experiments suggest that this space is well-suited for discretizing the vector Laplacian on surfaces and, more generally, the Bochner Laplacian on 2-manifolds.  In particular, we observe in numerical experiments that it can be used to compute the spectrum of the Bochner Laplacian with accuracy $O(h^2)$ on triangulations with maximum element diameter $h$.  We are pursuing this in a separate paper.

A key feature of the discretization described above is that it is intrinsic, in the sense that it can be implemented without specifying coordinates for the vertices of the triangulation; only edge lengths and connectivity information are needed.  In contrast, the finite element discretizations of the surface vector Laplacian that we are aware of, such as the ones studied in in~\cite{demlow2023tangential,bonito2020divergence,olshanskii2022tangential,jankuhn2021error,lederer2020divergence,reusken2022analysis,hansbo2020analysis}, are extrinsic. That is, the discretizations rely on vertex coordinates and generally change when we move the vertices without changing the edge lengths.  This is undesirable since the surface vector Laplacian---and more generally, the Bochner Laplacian---is an intrinsic object; it does not change under isometric deformations.

The reader may notice that the blow-up Whitney forms have some commonalities with the extended finite element method (XFEM) and other enriched finite element methods~\cite{moes1999finite}, which accommodate singular solutions to partial differential equations by enriching classical finite element spaces with singular functions.  There are a few differences though.  In the model problem above involving the surface vector Laplacian, the exact solution to the underlying smooth problem is not singular; only the discrete solution is.  Moreover, the locations of the discrete solution's singularities are mesh-dependent.   In a typical application of XFEM, such as the simulation of fracture propagation, the exact solution is singular, and the location of the singularity is independent of the mesh.  This is not to say that blow-up Whitney forms could not be used for such applications; we merely wish to emphasize that our goals are rather different.  It is also worth noting that enriched finite element spaces for differential forms of degree $k>0$ are not typically considered in the XFEM literature.

\subsection*{Gluing degrees of freedom.}  The way we choose to glue the local spaces together has an impact on the global space we obtain, so let us discuss this in more depth, focusing on the (scalar-valued) $b\mathcal{P}_1^-\Lambda^0(T)$ space for simplicity. As illustrated in Figure~\ref{fig:dof}, within a single element $T$, each degree of freedom is associated to a vertex-edge pair. So, globally, prior to gluing, we have a degree of freedom for each nested triple $P<E<T$ of a vertex $P$, edge $E$, and face $T$ of the triangulation; we call such a triple a \emph{flag}. We discuss four natural choices of gluings, which are illustrated in Figure~\ref{fig:glue}.

The gluing alluded to above is obtained by equating degrees of freedom with matching vertex-edge pairs. That is, we glue together the degrees of freedom corresponding to $P<E<T_1$ and $P<E<T_2$, where $T_1$ and $T_2$ are the two elements containing edge $E$. However, importantly, we do not equate degrees of freedom associated with flags $P<E_1<T$ and $P<E_2<T$ if $E_1 \neq E_2$.  This leads to a global finite element space whose members are single-valued along edges but multi-valued at vertices. We could even take this one step further and equate all degrees of freedom associated to a particular edge. That is, for each edge $E$, we glue $P_1<E<T_1$, $P_1<E<T_2$, $P_2<E<T_1$, and $P_2<E<T_2$, where $P_1$ and $P_2$ are the two endpoints of the edge $E$, and $T_1$ and $T_2$ are the two elements containing $E$.  Then we obtain a global finite element space whose members are single-valued \emph{and constant} along edges but multi-valued at vertices.  These two spaces have some resemblance to the Crouzeix-Raviart finite element space~\cite{crouzeix1973conforming}, whose members are single-valued at the midpoint of each edge.   

Consider now the following alternative gluing: we equate degrees of freedom $P_1<E_1<T_1$ and $P_2<E_2<T_2$ whenever $P_1=P_2$. Then our global space reduces to the piecewise affine Lagrange finite element space $\mathcal{P}_1^-\Lambda^0(\mathcal{T})$.  This can be understood with the help of~\eqref{affine}.  Yet another alternative is to equate degrees of freedom $P_1<E_1<T_1$ and $P_2<E_2<T_2$ whenever both $P_1=P_2$ and $T_1=T_2$.  Then our global space reduces to the space of discontinuous piecewise affine functions.

\begin{figure}
  \centering

  \begin{tikzpicture}[scale=0.45]
    \begin{pgfonlayer}{nodelayer}
      \node [style=none] (0) at (-3, 5) {};
      \node [style=none] (1) at (0, 1) {};
      \node [style=none] (2) at (3, 5) {};
      \node [style=none] (3) at (-4, 4) {};
      \node [style=none] (4) at (-1, 0) {};
      \node [style=none] (5) at (-6, 0) {};
      \node [style=none] (6) at (-6, -1) {};
      \node [style=none] (7) at (-1, -1) {};
      \node [style=none] (8) at (-4, -5) {};
      \node [style=none] (9) at (1, 0) {};
      \node [style=none] (10) at (4, 4) {};
      \node [style=none] (11) at (6, 0) {};
      \node [style=none] (12) at (1, -1) {};
      \node [style=none] (13) at (6, -1) {};
      \node [style=none] (14) at (4, -5) {};
      \node [style=none] (15) at (0, -2) {};
      \node [style=none] (16) at (-3, -6) {};
      \node [style=none] (17) at (3, -6) {};
      \node [circle,fill=black,inner sep=2pt] (18) at (-2, 5) {};
      \node [circle,fill=black,inner sep=2pt] (19) at (-2.25, 4) {};
      \node [circle,fill=black,inner sep=2pt] (20) at (2, 5) {};
      \node [circle,fill=black,inner sep=2pt] (21) at (2.25, 4) {};
      \node [circle,fill=black,inner sep=2pt] (22) at (0.75, 2) {};
      \node [circle,fill=black,inner sep=2pt] (23) at (-0.75, 2) {};
      \node [circle,fill=black,inner sep=2pt] (24) at (-1.75, 1) {};
      \node [circle,fill=black,inner sep=2pt] (25) at (-2, 0) {};
      \node [circle,fill=black,inner sep=2pt] (26) at (-5.5, 1) {};
      \node [circle,fill=black,inner sep=2pt] (27) at (-5, 0) {};
      \node [circle,fill=black,inner sep=2pt] (28) at (-5, -1) {};
      \node [circle,fill=black,inner sep=2pt] (29) at (-5.5, -2) {};
      \node [circle,fill=black,inner sep=2pt] (30) at (-4.5, -4) {};
      \node [circle,fill=black,inner sep=2pt] (31) at (-3.25, -4) {};
      \node [circle,fill=black,inner sep=2pt] (32) at (-2, -1) {};
      \node [circle,fill=black,inner sep=2pt] (33) at (-1.75, -2) {};
      \node [circle,fill=black,inner sep=2pt] (34) at (-0.75, -3) {};
      \node [circle,fill=black,inner sep=2pt] (35) at (0.75, -3) {};
      \node [circle,fill=black,inner sep=2pt] (36) at (-2.25, -5) {};
      \node [circle,fill=black,inner sep=2pt] (37) at (-2, -6) {};
      \node [circle,fill=black,inner sep=2pt] (38) at (2, -6) {};
      \node [circle,fill=black,inner sep=2pt] (39) at (2.25, -5) {};
      \node [circle,fill=black,inner sep=2pt] (40) at (1.75, -2) {};
      \node [circle,fill=black,inner sep=2pt] (41) at (2, -1) {};
      \node [circle,fill=black,inner sep=2pt] (42) at (3.25, -4) {};
      \node [circle,fill=black,inner sep=2pt] (43) at (4.5, -4) {};
      \node [circle,fill=black,inner sep=2pt] (44) at (5, -1) {};
      \node [circle,fill=black,inner sep=2pt] (45) at (5.5, -2) {};
      \node [circle,fill=black,inner sep=2pt] (46) at (5, 0) {};
      \node [circle,fill=black,inner sep=2pt] (47) at (5.5, 1) {};
      \node [circle,fill=black,inner sep=2pt] (48) at (3.25, 3) {};
      \node [circle,fill=black,inner sep=2pt] (49) at (4.5, 3) {};
      \node [circle,fill=black,inner sep=2pt] (50) at (1.75, 1) {};
      \node [circle,fill=black,inner sep=2pt] (51) at (2, 0) {};
      \node [circle,fill=black,inner sep=2pt] (52) at (-4.5, 3) {};
      \node [circle,fill=black,inner sep=2pt] (53) at (-3.25, 3) {};
    \end{pgfonlayer}
    \begin{pgfonlayer}{edgelayer}
      \draw[very thick] (0.center) to (2.center);
      \draw[very thick] (1.center) to (0.center);
      \draw[very thick] (1.center) to (2.center);
      \draw[very thick] (3.center) to (4.center);
      \draw[very thick] (5.center) to (4.center);
      \draw[very thick] (5.center) to (3.center);
      \draw[very thick] (6.center) to (7.center);
      \draw[very thick] (6.center) to (8.center);
      \draw[very thick] (8.center) to (7.center);
      \draw[very thick] (16.center) to (15.center);
      \draw[very thick] (16.center) to (17.center);
      \draw[very thick] (17.center) to (15.center);
      \draw[very thick] (12.center) to (14.center);
      \draw[very thick] (14.center) to (13.center);
      \draw[very thick] (12.center) to (13.center);
      \draw[very thick] (9.center) to (10.center);
      \draw[very thick] (9.center) to (11.center);
      \draw[very thick] (11.center) to (10.center);
      % Join DOFs on shared edges:
      \draw[very thick,dotted] (19.center) to (53.center);
      \draw[very thick,dotted] (24.center) to (23.center);
      \draw[very thick,dotted] (25.center) to (32.center);
      \draw[very thick,dotted] (27.center) to (28.center);
      \draw[very thick,dotted] (31.center) to (36.center);
      \draw[very thick,dotted] (33.center) to (34.center);
      \draw[very thick,dotted] (35.center) to (40.center);
      \draw[very thick,dotted] (39.center) to (42.center);
      \draw[very thick,dotted] (44.center) to (46.center);
      \draw[very thick,dotted] (41.center) to (51.center);
      \draw[very thick,dotted] (22.center) to (50.center);
      \draw[very thick,dotted] (21.center) to (48.center);
    \end{pgfonlayer}
  \end{tikzpicture} \hspace{0.2in}
  \begin{tikzpicture}[scale=0.45]
    \begin{pgfonlayer}{nodelayer}
      \node [style=none] (0) at (-3, 5) {};
      \node [style=none] (1) at (0, 1) {};
      \node [style=none] (2) at (3, 5) {};
      \node [style=none] (3) at (-4, 4) {};
      \node [style=none] (4) at (-1, 0) {};
      \node [style=none] (5) at (-6, 0) {};
      \node [style=none] (6) at (-6, -1) {};
      \node [style=none] (7) at (-1, -1) {};
      \node [style=none] (8) at (-4, -5) {};
      \node [style=none] (9) at (1, 0) {};
      \node [style=none] (10) at (4, 4) {};
      \node [style=none] (11) at (6, 0) {};
      \node [style=none] (12) at (1, -1) {};
      \node [style=none] (13) at (6, -1) {};
      \node [style=none] (14) at (4, -5) {};
      \node [style=none] (15) at (0, -2) {};
      \node [style=none] (16) at (-3, -6) {};
      \node [style=none] (17) at (3, -6) {};
      \node [circle,fill=black,inner sep=2pt] (18) at (-2, 5) {};
      \node [circle,fill=black,inner sep=2pt] (19) at (-2.25, 4) {};
      \node [circle,fill=black,inner sep=2pt] (20) at (2, 5) {};
      \node [circle,fill=black,inner sep=2pt] (21) at (2.25, 4) {};
      \node [circle,fill=black,inner sep=2pt] (22) at (0.75, 2) {};
      \node [circle,fill=black,inner sep=2pt] (23) at (-0.75, 2) {};
      \node [circle,fill=black,inner sep=2pt] (24) at (-1.75, 1) {};
      \node [circle,fill=black,inner sep=2pt] (25) at (-2, 0) {};
      \node [circle,fill=black,inner sep=2pt] (26) at (-5.5, 1) {};
      \node [circle,fill=black,inner sep=2pt] (27) at (-5, 0) {};
      \node [circle,fill=black,inner sep=2pt] (28) at (-5, -1) {};
      \node [circle,fill=black,inner sep=2pt] (29) at (-5.5, -2) {};
      \node [circle,fill=black,inner sep=2pt] (30) at (-4.5, -4) {};
      \node [circle,fill=black,inner sep=2pt] (31) at (-3.25, -4) {};
      \node [circle,fill=black,inner sep=2pt] (32) at (-2, -1) {};
      \node [circle,fill=black,inner sep=2pt] (33) at (-1.75, -2) {};
      \node [circle,fill=black,inner sep=2pt] (34) at (-0.75, -3) {};
      \node [circle,fill=black,inner sep=2pt] (35) at (0.75, -3) {};
      \node [circle,fill=black,inner sep=2pt] (36) at (-2.25, -5) {};
      \node [circle,fill=black,inner sep=2pt] (37) at (-2, -6) {};
      \node [circle,fill=black,inner sep=2pt] (38) at (2, -6) {};
      \node [circle,fill=black,inner sep=2pt] (39) at (2.25, -5) {};
      \node [circle,fill=black,inner sep=2pt] (40) at (1.75, -2) {};
      \node [circle,fill=black,inner sep=2pt] (41) at (2, -1) {};
      \node [circle,fill=black,inner sep=2pt] (42) at (3.25, -4) {};
      \node [circle,fill=black,inner sep=2pt] (43) at (4.5, -4) {};
      \node [circle,fill=black,inner sep=2pt] (44) at (5, -1) {};
      \node [circle,fill=black,inner sep=2pt] (45) at (5.5, -2) {};
      \node [circle,fill=black,inner sep=2pt] (46) at (5, 0) {};
      \node [circle,fill=black,inner sep=2pt] (47) at (5.5, 1) {};
      \node [circle,fill=black,inner sep=2pt] (48) at (3.25, 3) {};
      \node [circle,fill=black,inner sep=2pt] (49) at (4.5, 3) {};
      \node [circle,fill=black,inner sep=2pt] (50) at (1.75, 1) {};
      \node [circle,fill=black,inner sep=2pt] (51) at (2, 0) {};
      \node [circle,fill=black,inner sep=2pt] (52) at (-4.5, 3) {};
      \node [circle,fill=black,inner sep=2pt] (53) at (-3.25, 3) {};
      \node [style=none] (54) at (-2.75, 3.5) {};
      \node [style=none] (55) at (-1.25, 1.5) {};
      \node [style=none] (56) at (1.25, 1.5) {};
      \node [style=none] (57) at (2.75, 3.5) {};
      \node [style=none] (58) at (2, -0.5) {};
      \node [style=none] (59) at (5, -0.5) {};
      \node [style=none] (60) at (-5, -0.5) {};
      \node [style=none] (61) at (-2, -0.5) {};
      \node [style=none] (62) at (-2.75, -4.5) {};
      \node [style=none] (63) at (-1.25, -2.5) {};
      \node [style=none] (64) at (1.25, -2.5) {};
      \node [style=none] (65) at (2.75, -4.5) {};
    \end{pgfonlayer}
    \begin{pgfonlayer}{edgelayer}
      \draw[very thick] (0.center) to (2.center);
      \draw[very thick] (1.center) to (0.center);
      \draw[very thick] (1.center) to (2.center);
      \draw[very thick] (3.center) to (4.center);
      \draw[very thick] (5.center) to (4.center);
      \draw[very thick] (5.center) to (3.center);
      \draw[very thick] (6.center) to (7.center);
      \draw[very thick] (6.center) to (8.center);
      \draw[very thick] (8.center) to (7.center);
      \draw[very thick] (16.center) to (15.center);
      \draw[very thick] (16.center) to (17.center);
      \draw[very thick] (17.center) to (15.center);
      \draw[very thick] (12.center) to (14.center);
      \draw[very thick] (14.center) to (13.center);
      \draw[very thick] (12.center) to (13.center);
      \draw[very thick] (9.center) to (10.center);
      \draw[very thick] (9.center) to (11.center);
      \draw[very thick] (11.center) to (10.center);
      % Join DOFs on shared edges:
      \draw[very thick,dotted] (19.center) to (53.center);
      \draw[very thick,dotted] (24.center) to (23.center);
      \draw[very thick,dotted] (25.center) to (32.center);
      \draw[very thick,dotted] (27.center) to (28.center);
      \draw[very thick,dotted] (31.center) to (36.center);
      \draw[very thick,dotted] (33.center) to (34.center);
      \draw[very thick,dotted] (35.center) to (40.center);
      \draw[very thick,dotted] (39.center) to (42.center);
      \draw[very thick,dotted] (44.center) to (46.center);
      \draw[very thick,dotted] (41.center) to (51.center);
      \draw[very thick,dotted] (22.center) to (50.center);
      \draw[very thick,dotted] (21.center) to (48.center);
      % Join centers of dashed edges:
      \draw[very thick,dotted] (54.center) to (55.center);
      \draw[very thick,dotted] (56.center) to (57.center);
      \draw[very thick,dotted] (58.center) to (59.center);
      \draw[very thick,dotted] (64.center) to (65.center);
      \draw[very thick,dotted] (62.center) to (63.center);
      \draw[very thick,dotted] (60.center) to (61.center);
    \end{pgfonlayer}
  \end{tikzpicture} \\\vspace{0.2in}
  \begin{tikzpicture}[scale=0.45]
    \begin{pgfonlayer}{nodelayer}
      \node [style=none] (0) at (-3, 5) {};
      \node [style=none] (1) at (0, 1) {};
      \node [style=none] (2) at (3, 5) {};
      \node [style=none] (3) at (-4, 4) {};
      \node [style=none] (4) at (-1, 0) {};
      \node [style=none] (5) at (-6, 0) {};
      \node [style=none] (6) at (-6, -1) {};
      \node [style=none] (7) at (-1, -1) {};
      \node [style=none] (8) at (-4, -5) {};
      \node [style=none] (9) at (1, 0) {};
      \node [style=none] (10) at (4, 4) {};
      \node [style=none] (11) at (6, 0) {};
      \node [style=none] (12) at (1, -1) {};
      \node [style=none] (13) at (6, -1) {};
      \node [style=none] (14) at (4, -5) {};
      \node [style=none] (15) at (0, -2) {};
      \node [style=none] (16) at (-3, -6) {};
      \node [style=none] (17) at (3, -6) {};
      \node [circle,fill=black,inner sep=2pt] (18) at (-2, 5) {};
      \node [circle,fill=black,inner sep=2pt] (19) at (-2.25, 4) {};
      \node [circle,fill=black,inner sep=2pt] (20) at (2, 5) {};
      \node [circle,fill=black,inner sep=2pt] (21) at (2.25, 4) {};
      \node [circle,fill=black,inner sep=2pt] (22) at (0.75, 2) {};
      \node [circle,fill=black,inner sep=2pt] (23) at (-0.75, 2) {};
      \node [circle,fill=black,inner sep=2pt] (24) at (-1.75, 1) {};
      \node [circle,fill=black,inner sep=2pt] (25) at (-2, 0) {};
      \node [circle,fill=black,inner sep=2pt] (26) at (-5.5, 1) {};
      \node [circle,fill=black,inner sep=2pt] (27) at (-5, 0) {};
      \node [circle,fill=black,inner sep=2pt] (28) at (-5, -1) {};
      \node [circle,fill=black,inner sep=2pt] (29) at (-5.5, -2) {};
      \node [circle,fill=black,inner sep=2pt] (30) at (-4.5, -4) {};
      \node [circle,fill=black,inner sep=2pt] (31) at (-3.25, -4) {};
      \node [circle,fill=black,inner sep=2pt] (32) at (-2, -1) {};
      \node [circle,fill=black,inner sep=2pt] (33) at (-1.75, -2) {};
      \node [circle,fill=black,inner sep=2pt] (34) at (-0.75, -3) {};
      \node [circle,fill=black,inner sep=2pt] (35) at (0.75, -3) {};
      \node [circle,fill=black,inner sep=2pt] (36) at (-2.25, -5) {};
      \node [circle,fill=black,inner sep=2pt] (37) at (-2, -6) {};
      \node [circle,fill=black,inner sep=2pt] (38) at (2, -6) {};
      \node [circle,fill=black,inner sep=2pt] (39) at (2.25, -5) {};
      \node [circle,fill=black,inner sep=2pt] (40) at (1.75, -2) {};
      \node [circle,fill=black,inner sep=2pt] (41) at (2, -1) {};
      \node [circle,fill=black,inner sep=2pt] (42) at (3.25, -4) {};
      \node [circle,fill=black,inner sep=2pt] (43) at (4.5, -4) {};
      \node [circle,fill=black,inner sep=2pt] (44) at (5, -1) {};
      \node [circle,fill=black,inner sep=2pt] (45) at (5.5, -2) {};
      \node [circle,fill=black,inner sep=2pt] (46) at (5, 0) {};
      \node [circle,fill=black,inner sep=2pt] (47) at (5.5, 1) {};
      \node [circle,fill=black,inner sep=2pt] (48) at (3.25, 3) {};
      \node [circle,fill=black,inner sep=2pt] (49) at (4.5, 3) {};
      \node [circle,fill=black,inner sep=2pt] (50) at (1.75, 1) {};
      \node [circle,fill=black,inner sep=2pt] (51) at (2, 0) {};
      \node [circle,fill=black,inner sep=2pt] (52) at (-4.5, 3) {};
      \node [circle,fill=black,inner sep=2pt] (53) at (-3.25, 3) {};
    \end{pgfonlayer}
    \begin{pgfonlayer}{edgelayer}
      \draw[very thick] (0.center) to (2.center);
      \draw[very thick] (1.center) to (0.center);
      \draw[very thick] (1.center) to (2.center);
      \draw[very thick] (3.center) to (4.center);
      \draw[very thick] (5.center) to (4.center);
      \draw[very thick] (5.center) to (3.center);
      \draw[very thick] (6.center) to (7.center);
      \draw[very thick] (6.center) to (8.center);
      \draw[very thick] (8.center) to (7.center);
      \draw[very thick] (16.center) to (15.center);
      \draw[very thick] (16.center) to (17.center);
      \draw[very thick] (17.center) to (15.center);
      \draw[very thick] (12.center) to (14.center);
      \draw[very thick] (14.center) to (13.center);
      \draw[very thick] (12.center) to (13.center);
      \draw[very thick] (9.center) to (10.center);
      \draw[very thick] (9.center) to (11.center);
      \draw[very thick] (11.center) to (10.center);
      % Join DOFs to make continuous Lagrange element:
      \draw[very thick,dotted] (18.center) to (19.center);
      \draw[very thick,dotted] (20.center) to (21.center);
      \draw[very thick,dotted] (23.center) to (22.center);
      \draw[very thick,dotted] (52.center) to (53.center);
      \draw[very thick,dotted] (26.center) to (27.center);
      \draw[very thick,dotted] (24.center) to (25.center);
      \draw[very thick,dotted] (28.center) to (29.center);
      \draw[very thick,dotted] (30.center) to (31.center);
      \draw[very thick,dotted] (32.center) to (33.center);
      \draw[very thick,dotted] (36.center) to (37.center);
      \draw[very thick,dotted] (34.center) to (35.center);
      \draw[very thick,dotted] (39.center) to (38.center);
      \draw[very thick,dotted] (42.center) to (43.center);
      \draw[very thick,dotted] (40.center) to (41.center);
      \draw[very thick,dotted] (44.center) to (45.center);
      \draw[very thick,dotted] (50.center) to (51.center);
      \draw[very thick,dotted] (48.center) to (49.center);
      \draw[very thick,dotted] (46.center) to (47.center);
      \draw[very thick,dotted] (53.center) to (19.center);
      \draw[very thick,dotted] (24.center) to (23.center);
      \draw[very thick,dotted] (22.center) to (50.center);
      \draw[very thick,dotted] (25.center) to (32.center);
      \draw[very thick,dotted] (33.center) to (34.center);
      \draw[very thick,dotted] (35.center) to (40.center);
      \draw[very thick,dotted] (51.center) to (41.center);
      \draw[very thick,dotted] (21.center) to (48.center);
      \draw[very thick,dotted] (46.center) to (44.center);
      \draw[very thick,dotted] (39.center) to (42.center);
      \draw[very thick,dotted] (36.center) to (31.center);
      \draw[very thick,dotted] (28.center) to (27.center);
    \end{pgfonlayer}
  \end{tikzpicture} \hspace{0.2in}
  \begin{tikzpicture}[scale=0.45]
    \begin{pgfonlayer}{nodelayer}
      \node [style=none] (0) at (-3, 5) {};
      \node [style=none] (1) at (0, 1) {};
      \node [style=none] (2) at (3, 5) {};
      \node [style=none] (3) at (-4, 4) {};
      \node [style=none] (4) at (-1, 0) {};
      \node [style=none] (5) at (-6, 0) {};
      \node [style=none] (6) at (-6, -1) {};
      \node [style=none] (7) at (-1, -1) {};
      \node [style=none] (8) at (-4, -5) {};
      \node [style=none] (9) at (1, 0) {};
      \node [style=none] (10) at (4, 4) {};
      \node [style=none] (11) at (6, 0) {};
      \node [style=none] (12) at (1, -1) {};
      \node [style=none] (13) at (6, -1) {};
      \node [style=none] (14) at (4, -5) {};
      \node [style=none] (15) at (0, -2) {};
      \node [style=none] (16) at (-3, -6) {};
      \node [style=none] (17) at (3, -6) {};
      \node [circle,fill=black,inner sep=2pt] (18) at (-2, 5) {};
      \node [circle,fill=black,inner sep=2pt] (19) at (-2.25, 4) {};
      \node [circle,fill=black,inner sep=2pt] (20) at (2, 5) {};
      \node [circle,fill=black,inner sep=2pt] (21) at (2.25, 4) {};
      \node [circle,fill=black,inner sep=2pt] (22) at (0.75, 2) {};
      \node [circle,fill=black,inner sep=2pt] (23) at (-0.75, 2) {};
      \node [circle,fill=black,inner sep=2pt] (24) at (-1.75, 1) {};
      \node [circle,fill=black,inner sep=2pt] (25) at (-2, 0) {};
      \node [circle,fill=black,inner sep=2pt] (26) at (-5.5, 1) {};
      \node [circle,fill=black,inner sep=2pt] (27) at (-5, 0) {};
      \node [circle,fill=black,inner sep=2pt] (28) at (-5, -1) {};
      \node [circle,fill=black,inner sep=2pt] (29) at (-5.5, -2) {};
      \node [circle,fill=black,inner sep=2pt] (30) at (-4.5, -4) {};
      \node [circle,fill=black,inner sep=2pt] (31) at (-3.25, -4) {};
      \node [circle,fill=black,inner sep=2pt] (32) at (-2, -1) {};
      \node [circle,fill=black,inner sep=2pt] (33) at (-1.75, -2) {};
      \node [circle,fill=black,inner sep=2pt] (34) at (-0.75, -3) {};
      \node [circle,fill=black,inner sep=2pt] (35) at (0.75, -3) {};
      \node [circle,fill=black,inner sep=2pt] (36) at (-2.25, -5) {};
      \node [circle,fill=black,inner sep=2pt] (37) at (-2, -6) {};
      \node [circle,fill=black,inner sep=2pt] (38) at (2, -6) {};
      \node [circle,fill=black,inner sep=2pt] (39) at (2.25, -5) {};
      \node [circle,fill=black,inner sep=2pt] (40) at (1.75, -2) {};
      \node [circle,fill=black,inner sep=2pt] (41) at (2, -1) {};
      \node [circle,fill=black,inner sep=2pt] (42) at (3.25, -4) {};
      \node [circle,fill=black,inner sep=2pt] (43) at (4.5, -4) {};
      \node [circle,fill=black,inner sep=2pt] (44) at (5, -1) {};
      \node [circle,fill=black,inner sep=2pt] (45) at (5.5, -2) {};
      \node [circle,fill=black,inner sep=2pt] (46) at (5, 0) {};
      \node [circle,fill=black,inner sep=2pt] (47) at (5.5, 1) {};
      \node [circle,fill=black,inner sep=2pt] (48) at (3.25, 3) {};
      \node [circle,fill=black,inner sep=2pt] (49) at (4.5, 3) {};
      \node [circle,fill=black,inner sep=2pt] (50) at (1.75, 1) {};
      \node [circle,fill=black,inner sep=2pt] (51) at (2, 0) {};
      \node [circle,fill=black,inner sep=2pt] (52) at (-4.5, 3) {};
      \node [circle,fill=black,inner sep=2pt] (53) at (-3.25, 3) {};
    \end{pgfonlayer}
    \begin{pgfonlayer}{edgelayer}
      \draw[very thick] (0.center) to (2.center);
      \draw[very thick] (1.center) to (0.center);
      \draw[very thick] (1.center) to (2.center);
      \draw[very thick] (3.center) to (4.center);
      \draw[very thick] (5.center) to (4.center);
      \draw[very thick] (5.center) to (3.center);
      \draw[very thick] (6.center) to (7.center);
      \draw[very thick] (6.center) to (8.center);
      \draw[very thick] (8.center) to (7.center);
      \draw[very thick] (16.center) to (15.center);
      \draw[very thick] (16.center) to (17.center);
      \draw[very thick] (17.center) to (15.center);
      \draw[very thick] (12.center) to (14.center);
      \draw[very thick] (14.center) to (13.center);
      \draw[very thick] (12.center) to (13.center);
      \draw[very thick] (9.center) to (10.center);
      \draw[very thick] (9.center) to (11.center);
      \draw[very thick] (11.center) to (10.center);
      % Join DOFs to make DG element:
      \draw[very thick,dotted] (18.center) to (19.center);
      \draw[very thick,dotted] (20.center) to (21.center);
      \draw[very thick,dotted] (23.center) to (22.center);
      \draw[very thick,dotted] (52.center) to (53.center);
      \draw[very thick,dotted] (26.center) to (27.center);
      \draw[very thick,dotted] (24.center) to (25.center);
      \draw[very thick,dotted] (28.center) to (29.center);
      \draw[very thick,dotted] (30.center) to (31.center);
      \draw[very thick,dotted] (32.center) to (33.center);
      \draw[very thick,dotted] (36.center) to (37.center);
      \draw[very thick,dotted] (34.center) to (35.center);
      \draw[very thick,dotted] (39.center) to (38.center);
      \draw[very thick,dotted] (42.center) to (43.center);
      \draw[very thick,dotted] (40.center) to (41.center);
      \draw[very thick,dotted] (44.center) to (45.center);
      \draw[very thick,dotted] (50.center) to (51.center);
      \draw[very thick,dotted] (48.center) to (49.center);
      \draw[very thick,dotted] (46.center) to (47.center);
    \end{pgfonlayer}
  \end{tikzpicture}
  \caption{Four ways of gluing together the local spaces $b\mathcal{P}_1^-\Lambda^0(T)$.  Top left: Degrees of freedom on shared edges are equated, leading to a space of functions whose members are single-valued along edges but multi-valued at vertices.  Top right: In addition to equating degrees of freedom on shared edges, degrees of freedom on opposite endpoints of each edge are equated, leading to a space of functions whose members are single-valued \emph{and constant} along edges but multi-valued at vertices. Bottom left: Degrees of freedom at each vertex are equated, leading to the piecewise affine Lagrange finite element space.  Bottom right: Degrees of freedom at each vertex are equated only within individual triangles, leading to the space of discontinuous piecewise affine functions.}
  \label{fig:glue}
\end{figure}
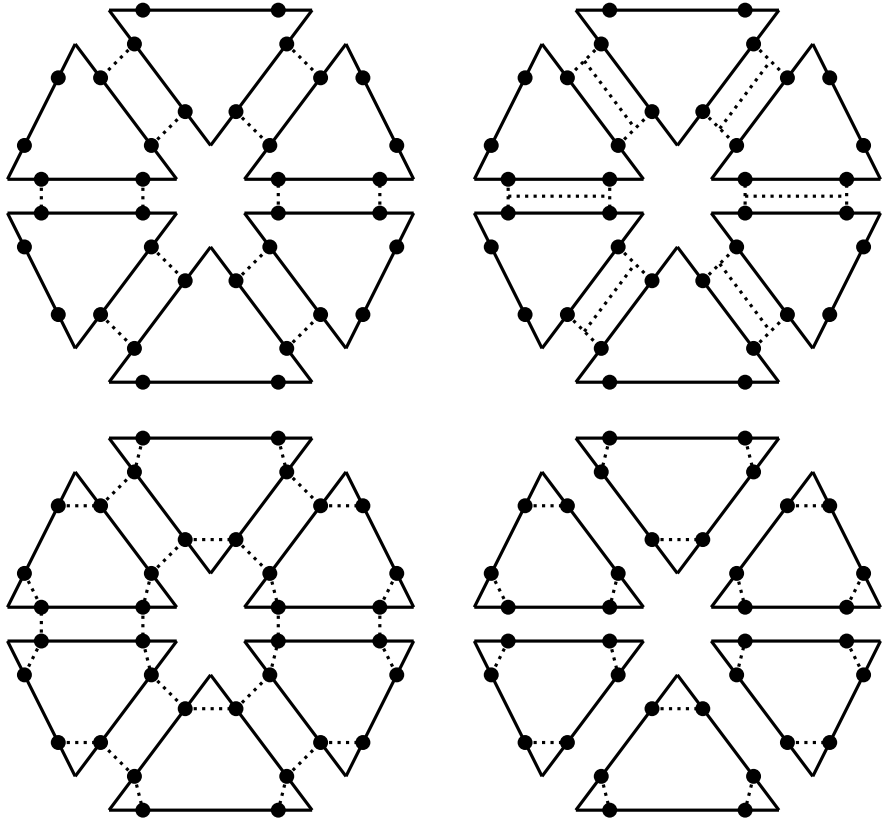

\begin{table}
  \begin{tabular}{c|c|c}
    $k$ & $F$ & $\psi_F$ \\
    \hline & & \\[-0.8em]
    0 & $012$ & $\displaystyle\frac{\lambda_0 \lambda_1}{\rr{012}\rr{12}}$ \\[1em]
    1 & $\{01\}2$ & $\displaystyle\frac{\varphi_{01}}{\rr{012}^2}$ \\[1em]
    1 & $0\{12\}$ & $\displaystyle\frac{\lambda_0 \phi_{12}}{\rr{012}\rr{12}} \left( \frac{1}{\rr{012}} + \frac{1}{\rr{12}} \right)$ \\[1em]
    2 & $\{012\}$ & $\displaystyle\frac{\phi_{012}}{\rr{012}^3}$ \\[1em]
  \end{tabular}
  \caption{Blow-up Whitney $k$-forms in dimension $n=2$.  Here, $\lambda_i$ denotes the barycentric coordinate function associated with vertex $i$, $\phi_{ij}$ denotes the classical Whitney 1-form associated with the oriented edge $(i,j)$, and $\phi_{ijk}$ denotes the classical Whitney 2-form associated with the oriented triangle $(i,j,k)$.  The symbols $\rr{12}$ and $\rr{012}$ are shorthand for $\lambda_1+\lambda_2$ and $\lambda_0+\lambda_1+\lambda_2$, respectively.  Note that $\lambda_{012}=1$; it is included in the formulas to homogenize the fractions.  In the left column, shorthand notation is used for flags.  For example, $\{01\}2$ is shorthand for the flag $(\{0,1\},\{2\})$. Only some flags are listed; the rest may be obtained by permuting the vertices of the triangle.} \label{tab:2d}
\end{table}

\begin{table}
  \begin{tabular}{c|c|c}
    $k$ & $F$ & $\psi_F$ \\
    \hline & & \\[-0.8em]
    0 & $0123$ & $\displaystyle\frac{\lambda_0 \lambda_1 \lambda_2}{\rr{0123}\rr{123}\rr{23}}$ \\[1em]
    1 & $\{01\}23$ & $\displaystyle\frac{\phi_{01} \lambda_2}{\rr{0123}^2 \rr{23}}$ \\[1em]
    1 & $0\{12\}3$ & $\displaystyle\frac{ \lambda_0 \phi_{12} }{ \rr{0123} \rr{123} } \left( \frac{1}{\rr{0123}} + \frac{1}{\rr{123}} \right)$ \\[1em]
    1 & $01\{23\}$ & $\displaystyle\frac{ \lambda_0 \lambda_1 \phi_{23} }{ \rr{0123} \rr{123} \rr{23} } \left( \frac{1}{\rr{0123}} + \frac{1}{\rr{123}} + \frac{1}{\rr{23}} \right)$ \\[1em]
    2 & $\{01\}\{23\}$ & $\displaystyle\frac{\phi_{01} \wedge \phi_{23}}{\rr{0123}^2 \rr{23}} \left( \frac{2}{\rr{0123}} + \frac{1}{\rr{23}} \right)$ \\[1em]
    2 & $\{012\}3$ & $\displaystyle\frac{\phi_{012}}{\rr{0123}^3}$ \\[1em]
    2 & $0\{123\}$ & $\displaystyle\frac{\lambda_0\phi_{123}}{\rr{0123}\rr{123}} \left( \frac{1}{\rr{0123}^2} + \frac{1}{\rr{0123}\rr{123}} + \frac{1}{\rr{123}^2} \right)$ \\[1em]
    3 & $\{0123\}$ & $\displaystyle\frac{\phi_{0123}}{\rr{0123}^4}$ \\[1em]
  \end{tabular}
  \caption{Blow-up Whitney $k$-forms in dimension $n=3$.  The notation here is similar to that in Table~\ref{tab:2d}.  For example, $\rr{123}=\lambda_1+\lambda_2+\lambda_3$, $\phi_{123}$ is the classical Whitney 2-form associated with the oriented triangle $(1,2,3)$, and $0\{12\}3$ is shorthand for the flag $(\{0\},\{1,2\},\{3\})$. Factors of $\rr{0123}=1$ are included to homogenize the fractions. Only some flags are listed; the rest may be obtained by permuting the vertices of the tetrahedron.} \label{tab:3d}
\end{table}

\subsection*{Bases for blow-up Whitney forms.}  Tables~\ref{tab:2d} and~\ref{tab:3d} list bases for the blow-up Whitney forms on an $n$-simplex $T$ in dimensions $n=2$ and $n=3$, respectively. To understand these tables, a few comments are in order. As we have alluded to above, in any dimension $n$, the degrees of freedom can be conveniently indexed by \emph{flags}: nested sequences of faces of $T$. However, it turns out to be more convenient to view a flag as an ordered partition of the vertices of $T$; from such an ordered partition, we can recover the nested sequence of faces by taking the span of the cumulative union of the sets of the partition. For example, if a triangle $T$ has vertices labeled $0,1,2$, then the flag $P_0<E_{01}<T$ is given by the ordered partition $(\{0\},\{1\},\{2\})$, the flag $P_0<T$ is given by the partition $(\{0\},\{1,2\})$, and the flag $E_{01}<T$ is given by the ordered partition $(\{0,1\},\{2\})$. Additionally, to make the notation more concise, we denote these flags with the shorthand $012$, $0\{12\}$, and $\{01\}2$, respectively.

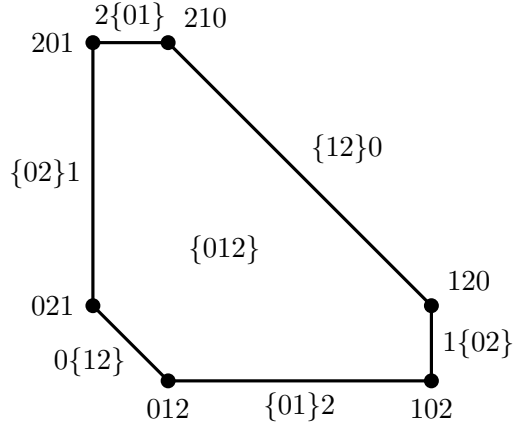
\begin{figure}
  \begin{tikzpicture}[scale=0.5]
    \begin{pgfonlayer}{nodelayer}
      \node [fill,circle,inner sep=2pt,label=left:$021$] (0) at (0, 2) {};
      \node [fill,circle,inner sep=2pt,label=below:$012$] (1) at (2, 0) {};
      \node [fill,circle,inner sep=2pt,label=below:$102$] (2) at (9, 0) {};
      \node [fill,circle,inner sep=2pt,label=above right:$120$] (3) at (9, 2) {};
      \node [fill,circle,inner sep=2pt,label=above right:$210$] (4) at (2, 9) {};
      \node [fill,circle,inner sep=2pt,label=left:$201$] (5) at (0, 9) {};
      \node [] at (3.5,3.5) {$\{012\}$};
    \end{pgfonlayer}
    \begin{pgfonlayer}{edgelayer}
      \draw[very thick] (5.center) to[edge label'=$\{02\}1$] (0.center);
      \draw[very thick] (0.center) to[edge label'=$0\{12\}$,inner sep=1pt]  (1.center);
      \draw[very thick] (1.center) to[edge label'=$\{01\}2$] (2.center);
      \draw[very thick] (2.center) to[edge label'=$1\{02\}$] (3.center);
      \draw[very thick] (3.center) to[edge label'=$\{12\}0$] (4.center);
      \draw[very thick] (4.center) to[edge label'=$2\{01\}$] (5.center);
    \end{pgfonlayer}
  \end{tikzpicture}
  \caption{The blow-up $\widetilde{T}$ of a triangle $T$ has 6 faces of dimension 1 that one can loosely think of as the 3 edges of $T$ and 3 infinitesimal arcs at the vertices of $T$. Here, the arcs are depicted as straight lines parallel to the opposite side.  This reflects the fact that we can parametrize a point on the arc by specifying a ray from the vertex through the point and seeing where it meets the opposite side.  The faces of $\widetilde{T}$ are labelled using shorthand notation for flags as described in the text.} \label{fig:blowuptri}
\end{figure}

As we discussed above, the degrees of freedom for blow-up Whitney $k$-forms correspond to integration over the $k$-dimensional faces of the blow-up $\widetilde{T}$, so, more properly, it is these faces that are indexed by flags. For example, in two dimensions, the blow-up $\widetilde{T}$ of a triangle $T$ has $6$ faces of dimension $1$ that one can loosely think of as the $3$ edges of $T$ and $3$ infinitesimal arcs at the vertices of $T$. The face corresponding to the edge $E_{01}$ is then given by the flag $E_{01}<T$, or $\{01\}2$; its endpoints are given by the flags $012$ and $102$. Meanwhile, the face corresponding to the arc at vertex $2$ corresponds to the flag $V_2<T$ or $2\{01\}$; its endpoints are given by the flags $201$ and $210$. See Figure~\ref{fig:blowuptri} for an illustration, and see Section~\ref{sec:cohomology} and Figure~\ref{fig:blowuptet} for information about how this extends to higher dimensions. 

Correspondingly, the dual basis we construct for the $6$-dimensional space of blow-up Whitney 1-forms on $T$ is given by integration over these 6 unidimensional faces of $\widetilde{T}$.  Point evaluations at the 6 endpoints of these unidimensional faces serve as the degrees of freedom for the $6$-dimensional space of blow-up Whitney 0-forms on $T$.  Integration over all of $\tl T$, which is equivalent to integration over $T$, serves as the degree of freedom for the $1$-dimensional space of blow-up Whitney 2-forms on $T$.  Table~\ref{tab:2d} lists bases for the blow-up Whitney forms on the triangle $T$ that are dual to these degrees of freedom, and Table~\ref{tab:3d} lists analogous bases in dimension $n=3$.  In other words, for a flag $F$, we list $\psi_F$, the unique blow-up Whitney form that evaluates to $1$ on the degree of freedom corresponding to the flag $F$ and $0$ on all others. (Please note that our Definition~\ref{def:whitney} of the classical Whitney $k$-forms differs from the usual one by a factor of $k!$.) Only a few of the basis forms are listed in the tables since all others can be obtained by permuting indices.

\subsection*{Organization}  
This paper is organized as follows.  In Section~\ref{sec:blowupWhitney}, we define the blow-up Whitney forms, introduce degrees of freedom for them, and point out a link with Poisson processes that allows us to quickly write down bases that are dual to those degrees of freedom.  In Section~\ref{sec:complex}, we use the Poisson process perspective to prove that the blow-up Whitney forms on a simplex $T$ form a complex, and we study its cohomology by defining the blow-up $\widetilde{T}$ of $T$ and showing that the complex of blow-up Whitney forms is isomorphic to the cellular cochain complex of $\widetilde{T}$. We conclude with some speculation about the cohomology of the complex of blow-up Whitney forms on a triangulation (as opposed to a single simplex), as well as with some preliminary results on generalizations to higher order.

\section{Blow-up Whitney forms} \label{sec:blowupWhitney}
\subsection{The quasi-cylindrical coordinate system on a simplex}
\begin{definition}
  For a set $V$, let $T=T_V$ denote the standard barycentric coordinate simplex
  \begin{equation*}
    T_V=\left\{\left(\lambda_i\right)_{i\in V}\in\bR_{\ge0}^V\mid\sum_{i\in V}\lambda_i=1\right\}.
  \end{equation*}
  
  Throughout, we will let $n=\dim T_V=\abs V-1$.
\end{definition}

Commonly, $V$ is simply taken to be the set $\{0,\dotsc,n\}$, and so the barycentric coordinates are simply $\lambda_0,\dotsc,\lambda_n$. However, we keep the notation that $V$ is a general set for a couple reasons. One reason is that we want to consider faces of $T_V$. Faces of $T_V$ are all of the form $T_W$ where $W$ is a subset of $V$, but $W$ will generally not be a set of consecutive integers even if $V$ is. For example, the edge joining vertices $0$ and $2$ will be $T_W$ for $W=\{0,2\}$. Another reason to use the general set notation is that we may want to consider a full triangulation, not just a single element. In that context, it makes sense to index \emph{all} of the vertices of the triangulation; then, for a particular element $T$, we take $V$ to be the indices of the vertices of $T$.

We use the set-theory notation $\bR^V$ rather than $\bR^{n+1}$ for similar reasons: if $V=\{0,1,2\}$ and $W=\{0,2\}$ and we talk about the projection $\bR^V\to\bR^W$, it is clear \emph{which} coordinates are being projected, whereas it is ambiguous with $\bR^3\to\bR^2$.

\begin{definition}\label{def:flag}
  For a set $V$, a \emph{flag} $F$ is an ordered partition of $V$, which we denote by
  \begin{equation*}
    F=\left(V_0,V_1,\dotsc,V_{n-k}\right).
  \end{equation*}
  As before, $n+1$ denotes $\abs V$, and note that $k$ is the \emph{complement} of the number of sets in the partition. Observe that, if $k=0$, then $F$ is just an ordering of $V$, and if $k=n$, then $F=(V)$.

  In the context of a particular flag, we will
  \begin{itemize}
  \item let $J$ denote the index set $\{0,\dotsc,n-k\}$,
  \item let $T_j$ be shorthand for $T_{V_j}$,
  \item let $n_j$ be shorthand for $\dim T_j=\abs{V_j}-1$, and
  \item let $F!$ be shorthand for $\prod_{j\in J}n_j!$.
  \end{itemize}

\end{definition}

\begin{definition}[Quasi-cylindrical coordinates]
  Fix a set $V$ and a flag $F$, with notation as above. Let the \emph{radial simplex} $R=R_F$ associated to $F$ be the simplex $T_J$. In other words, each vertex $j$ of $R_F$ corresponds to the set of vertices $V_j$ of $T$. Let $\rho_0,\dotsc,\rho_{n-k}$ be the barycentric coordinates for $R$.

  Next, define the \emph{angular space} $\Theta$ by
  \begin{equation*}
    \Theta=\Theta_F=\prod_{j\in J}T_j=\left\{\left(\theta_i\right)_{i\in V}\in\bR^V_{\ge0}\mid\sum_{i\in V_j}\theta_i=1\quad\forall j\right\}.
  \end{equation*}

  Then the \emph{quasi-cylindrical coordinate system} on $T=T_V$ is defined by
  \begin{equation*}
    \lambda_i=\rho_j\theta_i,
  \end{equation*}
  for all $j$ and all $i\in V_j$.
\end{definition}

\begin{proposition}\label{prop:cyl}
  With notation as above, the quasi-cylindrical coordinate system is an isomorphism onto its image when restricted to the interior $\mr R$ of $R$, that is, the subset where $\rho_j\neq0$ for all $j$. If we also restrict to the interior $\mr\Theta$ of $\Theta$, then we obtain an isomorphism
  \begin{equation*}
    \mr R\times\mr\Theta\to\mr T.
  \end{equation*}
\end{proposition}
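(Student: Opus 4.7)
The plan is to exhibit an explicit smooth inverse to the quasi-cylindrical coordinate map. Given $(\lambda_i)_{i\in V}$ in the image, I would recover the radial coordinates by summing over blocks of the partition, $\rho_j=\sum_{i\in V_j}\lambda_i$, and then recover the angular coordinates by $\theta_i=\lambda_i/\rho_j$ for $i\in V_j$. The hypothesis $\rho_j\neq 0$ is precisely what makes the final division well-defined, and this is the only place in the argument where the restriction to $\mr R$ is essential.

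First I would check that the forward map actually lands in $T_V$: the one-line computation $\sum_{i\in V}\lambda_i=\sum_{j\in J}\rho_j\sum_{i\in V_j}\theta_i=\sum_{j\in J}\rho_j=1$ uses only the defining relations of $R$ and $\Theta$. Next I would verify the two compositions. Starting from $(\rho,\theta)\in\mr R\times\Theta$, the inverse recipe returns $\sum_{i\in V_j}\rho_j\theta_i=\rho_j$ and $\rho_j\theta_i/\rho_j=\theta_i$, reproducing the input. In the other direction, for $\lambda$ in the image, the forward map gives $\rho_j(\lambda_i/\rho_j)=\lambda_i$. The forward map is polynomial, and the inverse is a smooth rational function on the open locus $\rho_j>0$, so the two maps together exhibit a diffeomorphism from $\mr R\times\Theta$ onto its image.

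For the second claim, I would show that restricting further to $\mr\Theta$ makes the image equal to $\mr T$. If all $\rho_j>0$ and all $\theta_i>0$, then every $\lambda_i=\rho_j\theta_i>0$, so the image lies in $\mr T$. Conversely, given $\lambda_i>0$ for all $i$, each block $V_j$ is nonempty (being a part of a partition), so $\rho_j=\sum_{i\in V_j}\lambda_i>0$, and then $\theta_i=\lambda_i/\rho_j>0$ as well; hence the inverse sends $\mr T$ back into $\mr R\times\mr\Theta$.

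There is no serious obstacle here. The content of the proposition is the clean observation that an ordered partition of the vertices factors barycentric coordinates into a \emph{radial} profile $\rho$ recording the total mass in each block together with \emph{angular} profiles $\theta$ recording the distribution within each block; these two pieces of information are independent and together reconstruct $\lambda$ uniquely, so long as no block carries zero total mass.
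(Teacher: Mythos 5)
Your proposal is correct and follows essentially the same route as the paper's proof: exhibit the inverse $\rho_j=\sum_{i\in V_j}\lambda_i$, $\theta_i=\lambda_i/\rho_j$, verify the constraints defining $R$ and $\Theta$, and observe that $\lambda_i\neq 0$ for all $i$ is equivalent to all $\rho_j$ and $\theta_i$ being nonzero. No further comment is needed.
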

\begin{proof}
  Given $\rho_j$ and $\theta_i$, we compute
  \begin{equation*}
    \sum_{i\in V}\lambda_i=\sum_{j\in J}\sum_{i\in V_j}\lambda_i=\sum_{j\in J}\rho_j\sum_{i\in V_j}\theta_i=\sum_{j\in J}\rho_j=1.
  \end{equation*}
  Conversely, given the $\lambda_i$, set $\rho_j=\sum_{i\in V_j}\lambda_i$, and set $\theta_i=\frac{\lambda_i}{\rho_j}$. Here, we must use the $\rho_j\neq0$ assumption. Then we check that
  \begin{equation*}
    \sum_{j\in J}\rho_j=\sum_{j\in J}\sum_{i\in V_j}\lambda_i=\sum_{i\in V}\lambda_i=1
  \end{equation*}
  and, for each $j$,
  \begin{equation*}
    \sum_{i\in V_j}\theta_i=\frac1{\rho_j}\sum_{i\in V_j}\lambda_i=\frac{\rho_j}{\rho_j}=1.
  \end{equation*}
  Finally, we check that $\lambda_i\neq0$ for all $i$ is equivalent to $\rho_j\neq0$ and $\theta_i\neq0$ for all $i$ and $j$.
\end{proof}

Note that the quasi-cylindrical coordinate system depends only on the partition of $F$, not the order.

\begin{example}
  If $k=0$, then recall that $F$ is just an ordering of $V$, that is, a bijection between $\{0,\dotsc,n\}$ and $V$. This bijection then induces an isomorphism between $R$ and $T$. Meanwhile, each $T_j$ is a single-point set, so $\Theta$ is a single-point set as well.
\end{example}

\begin{example}\label{eg:k1}
  If $k=1$, then all but one $V_j$ has a single element. Let $j^*$ be the special value of $j$, and let $v_-$ and $v_+$ be the two elements of $V_{j^*}$. Then, for $j\neq j^*$, $T_j$ is a single point; meanwhile, $T_{j^*}$ is an interval. Therefore, $\Theta$ is an interval. Meanwhile, $R$ is an $(n-1)$-dimensional simplex, with one vertex corresponding to the set $\{v_-,v_+\}$, and one vertex for each other vertex of $T$.
\end{example}

\begin{example}\label{eg:kn1}
  If $k=n-1$, then $R$ is an interval, and $\Theta=T_0\times T_1$.
\end{example}

\begin{example}
  If $k=n$, then $R$ is a single-point set, and $\Theta=T$.
\end{example}

We give a brief summary of the notation implied by Proposition~\ref{prop:cyl}.

\begin{notation}
  In the context of a flag $F$:
  \begin{itemize}
  \item $R=R_F$ is the simplex with barycentric coordinates $\rho_j$, with $j\in J=\{0,\dotsc,n-k\}$.
  \item $\Theta=\Theta_F=\prod_{j\in J}T_j$, with barycentric coordinates $\theta_i$ with $i\in V_j$ on each factor.
  \item For $i\in V_j$, we have
    \begin{equation*}
      \lambda_i=\rho_j\theta_i,\qquad\rho_j=\sum_{i\in V_j}\lambda_i,\qquad\theta_i=\frac{\lambda_i}{\rho_j}.
    \end{equation*}
  \end{itemize}
\end{notation}

We also introduce some additional notation.

\begin{notation}
  In the context of a flag $F$:
  \begin{itemize}
  \item We will denote points in $R=R_F$ by $\brho$.
  \item We will denote points in $\Theta=\Theta_F$ by $\btheta$.
  \item We will let $\brho^F$ be shorthand for $\prod_{j\in J}\rho_j^{\abs{V_j}}$.
  \end{itemize}
\end{notation}

\subsection{Whitney forms}
\begin{definition}
  We define an \emph{orientation} of $V$ to be an ordering of $V$ modulo even permutations. An orientation of $V$ is equivalent to an orientation of $\bR^V$, which in turn is equivalent to an orientation of $T_V$ with the outward normal.
\end{definition}

\begin{notation}
  Let $X_{\id}$ be the tautological vector field in $\bR^V$ given by
  \begin{equation*}
    X_{\id}:=\sum_{i\in V}\lambda_i\pp{\lambda_i},
  \end{equation*}
  or, equivalently, by
  \begin{equation*}
    X_{\id}[\lambda_i]=\lambda_i.
  \end{equation*}
  Let $\d\lambda_V$ be shorthand for
  \begin{equation*}
    \d\lambda_V:=\bigwedge_{i\in V}d\lambda_i,
  \end{equation*}
  with an ordering compatible with the orientation, and let
  \begin{equation*}
    \rho=\rho_V:=\sum_{i\in V}\lambda_i.
  \end{equation*}
  Note that $T_V$ is defined by $\rho_V=1$ on $\bR_{\ge0}^V$.
\end{notation}

\begin{definition}\label{def:whitney}
  We define the \emph{Whitney form} $\phi=\phi_V$ on $\bR^V$ to be the contraction
  \begin{equation*}
    \phi=\phi_V:=n!\,i_{X_{\id}}\d\lambda_V.
  \end{equation*}
  Additionally, on $\bR_{>0}^V$, we define the \emph{homogenized Whitney form} to be
  \begin{equation*}
    \omega=\omega_V:=\frac{\phi_V}{\rho_V^{\abs{V}}}.
  \end{equation*}
  Note that, restricted to $T_V$, the forms $\phi$ and $\omega$ are equal.
  
  Finally, if $W$ is a subset of $V$, then $\phi_W$ can be viewed as a form on $\bR^V$ and $\omega_W$ can be viewed as a form on $\bR_{>0}^V$ by pulling it back via the projections $\bR^V\to\bR^W$.
\end{definition}

Our definition of the Whitney forms differs from the usual one by a factor of $n!$, for the following reason, which also underlies our choice of symbol $\omega$ for the homogenized Whitney form.

\begin{proposition}\label{prop:intwhitney}
  Restricted to $T_V$, the Whitney form $\phi_V$ and the homogenized Whitney form $\omega_V$ are a constant multiple of the volume form on $T_V$, scaled so that their integral over $T_V$ is $1$.
\end{proposition}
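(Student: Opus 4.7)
The plan is to compute $\phi_V|_{T_V}$ directly in coordinates, identify it as a constant multiple of a Lebesgue top form on $T_V$, and verify its integral is $1$ using the standard volume of the unit simplex. Since $\rho_V \equiv 1$ on $T_V$, the analogous statement for $\omega_V$ will be immediate from $\omega_V|_{T_V} = \phi_V|_{T_V}$, which is already noted in Definition~\ref{def:whitney}; so all work reduces to $\phi_V$.

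Write $V = \{i_0, i_1, \ldots, i_n\}$ in an order compatible with the orientation. Expanding the contraction gives
\[
  \phi_V = n! \sum_{j=0}^{n} (-1)^j \lambda_{i_j}\, \d\lambda_{i_0} \wedge \cdots \wedge \widehat{\d\lambda_{i_j}} \wedge \cdots \wedge \d\lambda_{i_n},
\]
where the hat denotes omission. To parametrize $T_V$, I would use $(\lambda_{i_1}, \ldots, \lambda_{i_n})$ as coordinates, so that the constraint $\sum_i \lambda_i = 1$ yields $\d\lambda_{i_0}|_{T_V} = -\sum_{\ell=1}^{n}\d\lambda_{i_\ell}$. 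Substituting this into each $j \geq 1$ summand, only the $\ell = j$ contribution survives (the others wedge to zero by repeating a factor), and after moving $\d\lambda_{i_j}$ into its natural position the three signs combine to $+1$, so each such term equals $\lambda_{i_j}\,\d\lambda_{i_1} \wedge \cdots \wedge \d\lambda_{i_n}$. The $j=0$ term is already in this form. Summing and using $\sum_j \lambda_{i_j} = 1$ on $T_V$ collapses everything to
\[
  \phi_V|_{T_V} = n!\, \d\lambda_{i_1} \wedge \cdots \wedge \d\lambda_{i_n},
\]
which is manifestly a constant multiple of the volume form.

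For the integral, the chosen coordinate chart identifies $T_V$ with the standard closed simplex $\{\lambda_{i_\ell} \geq 0,\ \sum_{\ell=1}^{n}\lambda_{i_\ell} \leq 1\}$ in $\bR^n$, whose Lebesgue volume is the classical value $1/n!$. Therefore $\int_{T_V} \phi_V = n! \cdot (1/n!) = 1$, as claimed.

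The only real obstacle is sign bookkeeping: verifying that the alternating $(-1)^j$ coming from the contraction, the minus sign from $\d\lambda_{i_0}|_{T_V} = -\sum_\ell \d\lambda_{i_\ell}$, and the $(-1)^{j-1}$ produced by reordering wedge factors combine to leave a single uniform $+\lambda_{i_j}\,\d\lambda_{i_1}\wedge\cdots\wedge\d\lambda_{i_n}$ in each summand. Once this is checked, the appearance of the factor $n!$ in Definition~\ref{def:whitney} is seen to be precisely what cancels the $1/n!$ from the volume of the standard simplex, explaining the authors' normalization choice.
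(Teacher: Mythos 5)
Your proof is correct, including the sign bookkeeping: the product $(-1)^j\cdot(-1)\cdot(-1)^{j-1}=+1$ does leave a uniform $\lambda_{i_j}\,\d\lambda_{i_1}\wedge\cdots\wedge\d\lambda_{i_n}$ in each summand, and the collapse via $\sum_j\lambda_{i_j}=1$ together with the $1/n!$ volume of the standard simplex gives the claim. The route differs slightly from the paper's: rather than expanding the interior product and eliminating $\d\lambda_{i_0}$ on the hyperplane, the paper first proves the identity $\d\lambda_V=\frac{\d\rho}{\rho}\wedge\frac{\phi_V}{n!}$ (Lemma~\ref{lem:intwhitney}), uses it to evaluate $\frac{\phi_V}{n!}$ on the frame of edge vectors from a chosen vertex via an $(n+1)\times(n+1)$ determinant that equals $\rho=1$, and then invokes the factor $n!$ between the parallelepiped and simplex volumes. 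The two arguments hinge on the same $n!\cdot(1/n!)$ cancellation; yours is more self-contained and arguably more elementary, while the paper's factors through a lemma that it reuses several times later (e.g.\ in Proposition~\ref{prop:quasisphere} and in the Poisson-process computations), which is the main reason for organizing the proof that way. One small point worth making explicit in your write-up is that the chart $(\lambda_{i_1},\dots,\lambda_{i_n})$ is positively oriented for the outward-normal orientation of $T_V$, so that the integral is $+1/n!$ rather than $-1/n!$; this follows from the same identity $\d\lambda_V(X_{\id},X_1,\dots,X_n)=(i_{X_{\id}}\d\lambda_V)(X_1,\dots,X_n)$ that you are implicitly using.
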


We begin the proof with a simple lemma that we will use frequently.

\begin{lemma}\label{lem:intwhitney}
  For a set $V$ of size $n+1$, we have
  \begin{equation*}
    \d\lambda_V=\frac{\d\rho}{\rho}\wedge\frac{\phi_V}{n!},
  \end{equation*}
  where, as before, $\rho=\sum_{i\in V}\lambda_i$.
\end{lemma}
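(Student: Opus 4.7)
The plan is to reduce the identity to a single application of the Cartan/Leibniz rule for interior products, using that $\d\lambda_V$ is a top form on $\bR^V$.

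First I would record two elementary observations. Since $X_{\id}[\lambda_i]=\lambda_i$ and $\rho=\sum_{i\in V}\lambda_i$, we have $X_{\id}[\rho]=\rho$, i.e.\ $i_{X_{\id}}\d\rho=\rho$, so $i_{X_{\id}}\bigl(\tfrac{\d\rho}{\rho}\bigr)=1$. Also, $\d\lambda_V$ has degree $|V|=n+1$ equal to the dimension of $\bR^V$, hence is a top form, so $\tfrac{\d\rho}{\rho}\wedge\d\lambda_V=0$.

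Next, I apply the graded Leibniz rule for interior product to $\tfrac{\d\rho}{\rho}\wedge\d\lambda_V=0$:
\begin{equation*}
  0 = i_{X_{\id}}\!\left(\frac{\d\rho}{\rho}\wedge\d\lambda_V\right) = \left(i_{X_{\id}}\frac{\d\rho}{\rho}\right)\d\lambda_V - \frac{\d\rho}{\rho}\wedge i_{X_{\id}}\d\lambda_V.
\end{equation*}
Substituting $i_{X_{\id}}(\d\rho/\rho)=1$ and $i_{X_{\id}}\d\lambda_V=\phi_V/n!$ then gives exactly the claimed formula $\d\lambda_V=\frac{\d\rho}{\rho}\wedge\frac{\phi_V}{n!}$.

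There is essentially no obstacle here; the only thing to double-check is the sign in the Leibniz rule, which for a 1-form factor contributes $(-1)^1=-1$ to the second term, and that the formula is meant to hold on $\bR_{>0}^V$ (or anywhere $\rho\neq0$) so that division by $\rho$ is legitimate. The proof does not even require restriction to $T_V$, which is consistent with how the lemma will be used in the sequel.
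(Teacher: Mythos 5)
Your proof is correct and is essentially the paper's proof: both apply the graded Leibniz rule for $i_{X_{\id}}$ to the vanishing top-degree-plus-one form, the only difference being that the paper contracts $\d\rho\wedge\d\lambda_V=0$ and divides by $\rho$ afterwards, while you divide first. No further comment needed.
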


\begin{proof}
  By dimension, $\d\rho\wedge\d\lambda_V=0$. Applying $i_{X_{\id}}$, we therefore have that
  \begin{equation*}
    0=i_{X_{\id}}(d\rho)\d\lambda_V-\d\rho\wedge i_{X_{\id}}\d\lambda_V=\rho\d\lambda_V-\d\rho\wedge\frac{\phi_V}{n!}.\qedhere
  \end{equation*}
\end{proof}

\begin{proof}[Proof of Proposition~\ref{prop:intwhitney}]
  As discussed, $\phi_V$ and $\omega_V$ are equal when restricted to $T_V$; we will focus on $\phi_V$. Let $X_1,\dotsc,X_n$ be vectors tangent to $T_V$. Observing that $\frac{\d\rho}\rho(X_{\id})=1$ and $\frac{\d\rho}\rho(X_i)=0$ for $1\le i\le n$, we have
  \begin{equation}\label{eq:phivx}
    \d\lambda_V(X_{\id},X_1,\dotsc,X_n)=\frac{\phi_V}{n!}(X_1,\dotsc,X_n).
  \end{equation}
  
  Now set $\rho=1$, choose a vertex $0\in V$, and let $X_1,\dotsc,X_n$ be the displacement vectors from $0$ to the other vertices of $T_V$, chosen so that $X_1,\dotsc,X_n$ is positively oriented. Observing that the $(n+1)$ by $(n+1)$ matrix
  \begin{equation*}
    \begin{pmatrix}
      \lambda_0&\lambda_1&\lambda_2&\lambda_3&\cdots&\lambda_n\\
      -1&1&0&0&\cdots&0\\
      -1&0&1&0&\cdots&0\\
      -1&0&0&1&&0\\
      \vdots&\vdots&\vdots&&\ddots&0\\
      -1&0&0&0&\cdots&1
    \end{pmatrix}
  \end{equation*}
  has determinant $\lambda_0+\dotsb+\lambda_n=\rho=1$, we conclude that
  \begin{equation*}
    \frac{\phi_V}{n!}(X_1,\dotsc,X_n)=1.
  \end{equation*}
  In particular, this quantity does not depend on the $\lambda_i$, so we know that $\frac{\phi_V}{n!}$ is a constant multiple of the volume form on $T_V$. Recalling that the volume of an $n$-dimensional parallelepiped is $n!$ times the volume of the corresponding simplex, we conclude that $\int_{T_V}\phi_V=1$, as desired.
\end{proof}

In light of the above proposition, one may wonder why we bother with the homogenized Whitney form $\omega_V$ at all. We will shortly see that it is, in fact, the more natural way to extend the normalized volume form on $T_V$ to $\bR_{>0}^V$.

\begin{proposition}\label{prop:quasisphere}
  Consider the quasi-spherical projection $\bR_{>0}^V\to\mr T_V$ defined by $\theta_i=\frac{\lambda_i}{\rho}$, where $\rho=\sum_{i\in V}\lambda_i$. Here, the $\lambda_i$ are the coordinates of $\bR_{>0}^V$, and the $\theta_i$ are the barycentric coordinates of $\mr T_V$. Then $\omega_V$ is the pullback of the normalized volume form on $T_V$.
\end{proposition}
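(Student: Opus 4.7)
The plan is to work in ``quasi-spherical'' coordinates $(\rho, \theta_1, \ldots, \theta_n)$ on $\bR^V_{>0}$, where $\rho = \sum_{i\in V}\lambda_i$, $\theta_i = \lambda_i/\rho$, and $\theta_0 = 1 - \theta_1 - \cdots - \theta_n$ is dependent. In these coordinates the quasi-spherical projection $\pi$ is simply the projection onto the $\theta$-factor, so identifying $\omega_V$ with the pullback of the normalized volume form reduces to computing $\omega_V$ in these coordinates and reading off the result.

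First, $X_{\id}$ takes a very simple form: $X_{\id}[\rho] = \sum_{i\in V}\lambda_i = \rho$ and $X_{\id}[\theta_i] = X_{\id}[\lambda_i/\rho] = \theta_i - \theta_i = 0$, so $X_{\id} = \rho\,\partial_\rho$. Next, expand $\d\lambda_V$ by substituting $\d\lambda_i = \theta_i\,\d\rho + \rho\,\d\theta_i$ for each $i$. Each term in the resulting wedge contains at most one factor of $\d\rho$; the ``no $\d\rho$'' term is a multiple of $\d\theta_0 \wedge \cdots \wedge \d\theta_n$, which vanishes because $\sum_i \d\theta_i = 0$, and the ``one $\d\rho$'' contributions sum (using the same constraint to eliminate $\d\theta_0$) to
\begin{equation*}
  \d\lambda_V = \rho^n\,\d\rho \wedge \d\theta_1 \wedge \cdots \wedge \d\theta_n.
\end{equation*}
Contracting with $X_{\id} = \rho\,\partial_\rho$ then gives $\phi_V = n!\,\rho^{n+1}\,\d\theta_1 \wedge \cdots \wedge \d\theta_n$, and dividing by $\rho^{\abs V} = \rho^{n+1}$ yields $\omega_V = n!\,\d\theta_1 \wedge \cdots \wedge \d\theta_n$.

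This expression is manifestly the pullback via $\pi$ of the $n$-form $n!\,\d\theta_1 \wedge \cdots \wedge \d\theta_n$ on $T_V$, which by Proposition~\ref{prop:intwhitney} (applied to $T_V$ with its barycentric coordinates $\theta_i$) is the normalized volume form. The main obstacle is the wedge-product expansion in the previous paragraph, where a careful bookkeeping of signs is required to verify that the ``one $\d\rho$'' contributions from each factor assemble to $\rho^n \sum_i \theta_i = \rho^n$ times $\d\rho \wedge \d\theta_1 \wedge \cdots \wedge \d\theta_n$. A slicker alternative bypasses this calculation entirely: $\omega_V$ is \emph{horizontal}, since $i_{X_{\id}}\omega_V$ is a multiple of $i_{X_{\id}} i_{X_{\id}} \d\lambda_V = 0$, and $\omega_V$ is \emph{scale-invariant}, since under $\lambda \mapsto c\lambda$ both $\phi_V$ and $\rho^{\abs V}$ scale by $c^{\abs V}$; these two properties characterize those $n$-forms on $\bR^V_{>0}$ that descend through $\pi$, and the descended form is then identified on $T_V = \{\rho = 1\}$ using Proposition~\ref{prop:intwhitney}.
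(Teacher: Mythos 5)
Your proof is correct. The main route---computing $X_{\id}=\rho\,\partial_\rho$ and $\d\lambda_V=\rho^n\,\d\rho\wedge\d\theta_1\wedge\dotsb\wedge\d\theta_n$ in the chart $(\rho,\theta_1,\dotsc,\theta_n)$ and then contracting---is a more explicit, computational variant of the paper's argument, and the sign bookkeeping you flag does work out: the term with $\d\rho$ in slot $j$ carries a sign $(-1)^j$, and eliminating $\d\theta_0$ via $\sum_i\d\theta_i=0$ produces a second factor of $(-1)^j$, so the coefficients sum to $\sum_j\theta_j=1$. The paper sidesteps this expansion entirely with a homogenization trick: it adjoins an auxiliary variable $\sigma$, realizes the quasi-spherical projection as the restriction to $\sigma=1$ of the automorphism $(\sigma,\blambda)\mapsto(\rho,\btheta)$ of $\bR_{>0}\times\bR_{>0}^V$, and uses $X_{\id}[\sigma/\rho]=0$ to conclude $n!\,i_{X_{\id}}\d\theta_V=(\sigma/\rho)^{\abs V}\,n!\,i_{X_{\id}}\d\lambda_V$, after which Proposition~\ref{prop:intwhitney} applied in the $\btheta$ system finishes. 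Your computation buys the explicit closed form $\omega_V=n!\,\d\theta_1\wedge\dotsb\wedge\d\theta_n$; the paper's buys freedom from signs. Your ``slicker alternative''---horizontality plus dilation-invariance make $\omega_V$ basic for the $\bR_{>0}$-action, and the descended form is identified on $T_V=\{\rho=1\}$ by Proposition~\ref{prop:intwhitney}---is essentially the paper's argument distilled to its conceptual core, and is arguably the cleanest of the three. One small note on the final identification in your main route: the most direct justification that $n!\,\d\theta_1\wedge\dotsb\wedge\d\theta_n$ is the normalized volume form on $T_V$ is that it is a constant-coefficient top form integrating to $n!\cdot(1/n!)=1$; citing Proposition~\ref{prop:intwhitney} also works but takes a line of unwinding, since that proposition is stated for $n!\,i_{X_{\id}}\d\theta_V$ rather than for $n!\,\d\theta_1\wedge\dotsb\wedge\d\theta_n$.
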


\begin{proof}
  Observe that we have an automorphism $\bR_{>0}\times\bR_{>0}^V\to\bR_{>0}\times\bR_{>0}^V$ with $(\sigma,\blambda)\mapsto(\rho,\btheta)$ given by
  \begin{equation*}
    \rho=\sum_{i\in V}\lambda_i,\qquad\theta_i=\frac\sigma\rho\lambda_i,
  \end{equation*}
  or conversely,
  \begin{equation*}
    \sigma=\sum_{i\in V}\theta_i,\qquad\lambda_i=\frac\rho\sigma\theta_i.
  \end{equation*}
  The quasi-spherical projection is simply the composition of this automorphism with natural inclusions and projections, given by
  \begin{equation*}
    \bR_{>0}^V\xrightarrow{\sigma=1}\bR_{>0}\times\bR_{>0}^V\to\bR_{>0}\times\bR_{>0}^V\rightarrow\bR_{>0}^V.
  \end{equation*}
  We now let $X_{\id}$ be the tautological vector field on the domain $\bR_{>0}\times\bR_{>0}^V$, that is, $X_{\id}=\sigma\pp\sigma+\sum_i\lambda_i\pp{\lambda_i}$. In other words, we have $X_{\id}[\lambda_i]=\lambda_i$ as before, and now we also have $X_{\id}[\sigma]=\sigma$. By dilation-equivariance of the automorphism or by explicit computation, we then have $X_{\id}[\theta_i]=\theta_i$ and $X_{\id}[\rho]=\rho$. In particular, we have $X_{\id}[\frac\sigma\rho]=0$. So, then,
  \begin{equation*}
    n!i_{X_{\id}}\d\theta_V=\left(\frac\sigma\rho\right)^{\abs V}n!i_{X_{\id}}\d\lambda_V.
  \end{equation*}
  We now restrict to $\sigma=1$, which is the defining equation of $T_V$ in the $\btheta$ coordinate system. Applying Proposition~\ref{prop:intwhitney} to the $\btheta$ coordinate system, we have that the left-hand side is the normalized volume form on $T_V$. Meanwhile, at $\sigma=1$, the right-hand side is $\omega_V$ by definition.
\end{proof}

We now discuss the implications for the quasi-cylindrical coordinate system, but first we establish notation.

Henceforth, we assume that we have selected an orientation of $T=T_V$, and, as is common in finite elements, we assume that we have preselected an orientation for every face as well. In particular, in the context of a flag $F$, each $T_j=T_{V_j}$ is equipped with an orientation, and thus so is $\Theta_F=\prod_{j=0}^{n-k}T_j$. With that in mind, we introduce additional shorthand.

\begin{notation}
  Given a flag $F$:
  \begin{itemize}
  \item Let $\phi_j$ be shorthand for $\phi_{V_j}$ and $\omega_j$ be shorthand for $\omega_{V_j}$.
  \item Let $\phi_F$ be shorthand for $\bigwedge_{j=0}^{n-k}\phi_j$ and $\omega_F$ be shorthand for $\bigwedge_{j=0}^{n-k}\omega_j$.
  \end{itemize}
  Note that, per our established notation, we have
  \begin{equation*}
    \phi_F=F!\bigwedge_{j\in J}i_{X_{\id}}\d\lambda_{V_j},\qquad\omega_F=\brho^{-F}\phi_F.
  \end{equation*}
\end{notation}

\begin{corollary}\label{cor:omegavol}
  Given a flag $F$, the form $\omega_F$ is the pullback of the normalized volume form on $\Theta_F$ via the quasi-cylindrical projection $\mr T\cong\mr R_F\times\mr\Theta_F\to\Theta_F$.
\end{corollary}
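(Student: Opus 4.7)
The plan is to reduce to Proposition~\ref{prop:quasisphere}, applied one factor at a time, and then assemble the pieces using the fact that the pullback of a wedge product is the wedge of the pullbacks.

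First, I would fix $j\in J$ and track the $j$-th factor. By definition of the quasi-cylindrical coordinates, the composition
\begin{equation*}
  \mr T \xrightarrow{\cong} \mr R_F\times\mr\Theta_F \xrightarrow{\pi_{\Theta_F}} \Theta_F \xrightarrow{\pi_j} T_j
\end{equation*}
sends $(\lambda_i)_{i\in V}$ to $(\theta_i)_{i\in V_j}=(\lambda_i/\rho_j)_{i\in V_j}$, where $\rho_j=\sum_{i\in V_j}\lambda_i$. This is precisely the restriction to $\mr T\subset\bR_{>0}^V$ of the composition $\bR_{>0}^V\to\bR_{>0}^{V_j}\to\mr T_{V_j}$, where the first map forgets the coordinates outside $V_j$ and the second is the quasi-spherical projection studied in Proposition~\ref{prop:quasisphere} for the vertex set $V_j$.

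Next, I would invoke Proposition~\ref{prop:quasisphere} with $V_j$ in place of $V$: it says that the quasi-spherical projection $\bR_{>0}^{V_j}\to\mr T_{V_j}$ pulls the normalized volume form on $T_j$ back to $\omega_{V_j}$. Pulling this further back to $\mr T$ via the projection $\bR_{>0}^V\to\bR_{>0}^{V_j}$ gives $\omega_j$ viewed as a form on $\mr T$ in the sense of Definition~\ref{def:whitney}. Hence, for each $j\in J$, the pullback of the normalized volume form on $T_j$ along $\pi_j\circ\pi_{\Theta_F}$ equals $\omega_j$.

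Finally, since the normalized volume form on the product $\Theta_F=\prod_{j\in J}T_j$ is $\bigwedge_{j\in J}\pi_j^*(\mathrm{vol}_{T_j})$, and pullback commutes with wedge products, the pullback via the quasi-cylindrical projection of the normalized volume form on $\Theta_F$ equals
\begin{equation*}
  \bigwedge_{j\in J}(\pi_j\circ\pi_{\Theta_F})^*(\mathrm{vol}_{T_j}) = \bigwedge_{j\in J}\omega_j = \omega_F,
\end{equation*}
as desired. There is no substantive obstacle here; the only thing to be careful about is that the orientations on the $T_j$ have been fixed compatibly so that the product orientation on $\Theta_F$ is the one implicit in the notation $\omega_F=\bigwedge_{j\in J}\omega_j$, which is exactly the convention set up just before the statement.
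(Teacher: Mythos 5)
Your proposal is correct and follows essentially the same route as the paper's proof: apply Proposition~\ref{prop:quasisphere} to each factor $\bR_{>0}^{V_j}\to\mr T_{V_j}$, note that the product of these quasi-spherical projections restricted to $\mr T$ is the quasi-cylindrical projection, and conclude by wedging since the normalized volume form on $\Theta_F$ is the wedge of those on the $T_j$. Your closing remark about orientation conventions is a fine extra precaution but not a substantive difference.
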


\begin{proof}
  We apply Proposition~\ref{prop:quasisphere} to each $\bR_{>0}^{V_j}\to\mr T_{V_j}$, obtaining that $\omega_j$ is the pullback of the normalized volume form on $T_j$ via the quasi-spherical projection on $\bR_{>0}^{V_j}$. Taking the product over $j$, we obtain a map $\bR_{>0}^V\to\mr\Theta_F$. This map is our quasi-cylindrical coordinate projection when restricted to $\mr T\subset\bR_{>0}^V$. The normalized volume form on $\Theta_F$ is the wedge product of the normalized volume forms on $T_j$, and so $\omega_F$ is its pullback.
\end{proof}

It remains to discuss the orientation of $R$. It turns out to be convenient to \emph{not} give $R$ the orientation implied by the order $\rho_0,\dotsc,\rho_{n-k}$. Instead, we orient $R$ so that the quasi-cylindrical coordinate system preserves orientation.

\begin{definition}
  Let $F$ be a flag on a set $V$ with notation as above. Assume that we have an orientation of $T$ and of each $T_j$. Then let $R$ have the orientation such that the map $\mr R\times\mr\Theta\to\mr T$ in Proposition~\ref{prop:cyl} preserves orientation.
\end{definition}

\subsection{Blow-up Whitney forms}
We are now ready to define the blow-up Whitney forms. As we have discussed, these are the same as the shadow forms of \cite{brasselet1991simplicial}, but we give a self-contained exposition emphasizing the blow-up/quasi-cylindrical perspective that we use later in the paper. For readers who wish to understand the equivalence with \cite{brasselet1991simplicial}, we recommend comparing Proposition~\ref{prop:psiexplicit} with \cite[Theorem 4.1]{brasselet1991simplicial} rather than comparing the definitions.

\begin{definition}\label{def:rs}
  Let $R$ be a simplex, and let $\brho$ be a point in the interior of $R$ with barycentric coordinates $\rho_0,\dotsc,\rho_{n-k}$. Then let $R_\brho$ be the subset of $R$ defined by
  \begin{align*}
    R_\brho&=\left\{\left(r_0,\dotsc,r_{n-k}\right)\in R\mid\frac{r_j}{\rho_{j}}\le\frac{r_{j'}}{\rho_{j'}}\quad\text{whenever}\quad j\le j'\right\}\\
           &=\left\{\left(r_0,\dotsc,r_{n-k}\right)\in R\mid\frac{r_{j-1}}{\rho_{j-1}}\le\frac{r_{j}}{\rho_j}\quad\text{for all}\quad 1\le j\le n-k\right\}\\
  \end{align*}
\end{definition}

An illustration of $R_\brho$ is given in Figure~\ref{fig:region}.

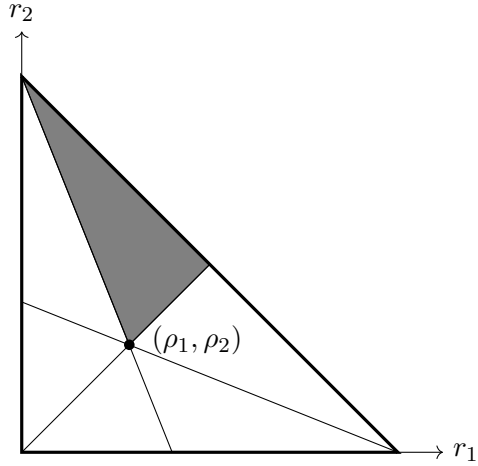
\begin{figure}
  \centering
  \begin{center}
    \begin{tikzpicture}[scale=2]
      \draw[fill=gray] (0,2.5) -- (1.25,1.25) -- (0.715,0.715) -- cycle;
      \draw[->] (0,0) -- (xyz cs:x=2.8) node[right] {$r_1$};
      \draw[->] (0,0) -- (xyz cs:y=2.8) node[above] {$r_2$};
      \draw[very thick] (0,0) node[label=$$]{$$}
      -- (2.5,0) node[anchor=north]{$$}
      -- (0,2.5) node[anchor=south]{$$}
      -- cycle;
      \draw (2.5,0) -- (0,1);
      \draw (0,2.5) -- (1,0);
      \draw (0,0) -- (1.25,1.25);
      \fill[black] (0.715,0.715) circle (1pt) node[label={[shift={(0.9,-0.4)}]$(\rho_1,\rho_2)$}] {};
    \end{tikzpicture}
  \end{center}
  \caption{If $R$ is a triangle and $\brho$ is a point in $R$, then $R_\brho$ is the shaded region depicted above.} \label{fig:region}
\end{figure}

\begin{definition}[Blow-up Whitney forms]\label{def:blowupwhitney}
  Fix a set $V$ and a flag $F$, with notation $T=T_V$, $R=T_J$, $T_j=T_{V_j}$, and $\Theta=\prod_{j\in J}T_j$ as above. As above, fixing an orientation for the $T_j$, choose an orientation for $R$ to ensure that the above isomorphism between the interiors of $T$ and $R\times\Theta$ preserves orientation. As before, let $\omega$ be the constant multiple of the volume form on $T$ whose integral over $T$ is one.

  Note that $\omega$ is an $n$-form, and that $R_\brho$ is $(n-k)$-dimensional. So then, by Fubini's theorem, for each $\brho\in\mr R$,
  \begin{equation*}
    \psi_{F,\brho}:=\int_{R_\brho}\omega
  \end{equation*}
  defines a $k$-form on $\Theta$. Putting all of these together, we obtain a $k$-form $\psi_F$ on $T$, which we call the \emph{blow-up Whitney form associated to $F$}. In detail, given a point $x$ in the interior of $T$ and vectors $X_1,\dotsc,X_k$ at $x$, we let $\brho$ be the projection of $x$ onto the $R$ factor, and we let $Y_1,\dotsc,Y_k$ be the projections of the $X_1,\dotsc,X_k$ onto the $\Theta$ factor. We then set $\psi_F(X_1,\dotsc,X_k):=\psi_{F,\brho}(Y_1,\dotsc,Y_k)$.

  In summary, to evaluate $\psi_F$ at a point $x\in\mr T$ with quasi-cylindrical coordinates $(\brho,\btheta)$, we perform the following operations to the volume form $\omega$ on $T$. Here, $\Phi\colon R\times\Theta\to T$ denotes the quasi-cylindrical coordinate map, and $\pi_\Theta\colon R\times\Theta\to\Theta$ is the projection.
  \begin{equation*}
    \begin{tikzcd}[column sep=small, row sep=tiny]
      \Lambda^n(\mr T)\arrow[r,"\Phi^*"]&\Lambda^n(\mr R\times\mr\Theta)\arrow[r,"\int_{R_\brho}"]&\Lambda^k(\mr\Theta)\arrow[r,"\pi_\Theta^*"]&\Lambda^k(\mr R\times\mr\Theta)\arrow[r,"\left(\Phi^{-1}\right)^*"]&\Lambda^k(\mr T)\arrow[r,"\rvert_x"]&\Lambda^kT^*_xT\\
      \omega\arrow[r, mapsto]&\omega\arrow[r, mapsto]&\psi_{F,\brho}\arrow[r, mapsto]&\psi_{F,\brho}\arrow[r, mapsto]&\psi_{F,\brho}\arrow[r, mapsto]&\psi_F\bigr\rvert_x
    \end{tikzcd}
  \end{equation*}
  Note that, in the second line, we treat some pullbacks as implicit, in the same sense that $\d\theta_i$ can be viewed as a one-form on $\Theta$, $R\times\Theta$, or $T$. We also emphasize that the operation depends on $x$ not only in the last step (evaluation), but also in the second step, because $R_{\brho}$ depends on $\brho$ which depends on $x$.
\end{definition}

\begin{example}
  Consider a triangle $T$ with vertices $V=\{0,1,2\}$, and let $F = (\{0\},\{1,2\})$.  Then $R \times \Theta$ is a product of two intervals, and the quasi-cylindrical coordinates satisfy
  \[
    \lambda_0 = \rho_0, \quad \lambda_1 = \rho_1 \theta_1, \quad \lambda_2 = \rho_1 \theta_2, \quad \rho_0 + \rho_1 = 1, \quad \theta_1+\theta_2=1.
  \]
  After some algebra, one can express the volume form 
  \[
    \omega = 2\frac{\lambda_0 d\lambda_1 \wedge d\lambda_2 + \lambda_1 d\lambda_2 \wedge d\lambda_0  + \lambda_2 d\lambda_0 \wedge d\lambda_1}{(\lambda_0+\lambda_1+\lambda_2)^3}
  \]
  in quasi-cylindrical coordinates as
  \[
    \omega = (\theta_1 d\theta_2 - \theta_2 d\theta_1) \wedge \frac{2\rho_1 d\rho_0}{(\rho_0+\rho_1)^2}.
  \]
  Note that $\theta_1\d\theta_2-\theta_2\d\theta_1$ is just $\d\theta_2$, but this form makes it easier to check that $\theta_1\d\theta_2-\theta_2\d\theta_1=\frac{\phi_{12}}{\rho_1^2}$, where $\varphi_{12}=\lambda_1 d\lambda_2 - \lambda_2 d\lambda_1$.
  Since $R$ is just an interval, the set $R_\brho$ is defined by $r_0\in[0,\rho_0]$. Since $\rho_1=1-\rho_0$, integrating over $R_{\brho}$ produces
  \[
    \psi_{F,\brho} = (\theta_1 d\theta_2 - \theta_2 d\theta_1) \frac{2\rho_0-\rho_0^2}{(\rho_0+\rho_1)^2}.
  \]
  One checks that $\frac{2\rho_0-\rho_0^2}{(\rho_0+\rho_1)^2} = \frac{\lambda_0}{\lambda_{012}} \left( \frac{\lambda_{12}}{\lambda_{012}} + 1 \right)$, where $\lambda_{12}=\lambda_1+\lambda_2=\rho_1$ and $\lambda_{012}=\lambda_0+\lambda_1+\lambda_2=1$, so 
  \[
    \psi_F= \frac{\lambda_0 \varphi_{12}}{\lambda_{012}\lambda_{12}} \left( \frac{1}{\lambda_{012}} + \frac{1}{\lambda_{12}} \right).
  \]
  In Section~\ref{sec:poisson} we will show that such formulas for $\psi_F$ can be arrived at more easily using a combinatorial calculation associated with Poisson processes.
\end{example}

The following proposition gives a formula for $\psi_F$ that also appears in~\cite[Theorem 4.1]{brasselet1991simplicial}.

\begin{proposition}\label{prop:psiexplicit}
  We have that
  \begin{equation*}
    \psi_F=p_F\omega_F
  \end{equation*}
  where $p_F(\brho)$ is the relative volume of the subset $R_\brho\times\Theta$ of $T$, and we recall that $\omega_F=\brho^{-F}\phi_F=\bigwedge_{j=0}^{n-k}\rho_j^{-\abs{V_j}}\phi_{V_j}$ is the normalized volume form on $\Theta$, pulled back to $T$ via the quasi-cylindrical projection $\mr T\cong\mr R\times\mr\Theta\to\Theta$.
\end{proposition}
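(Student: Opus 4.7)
The plan is to use the quasi-cylindrical isomorphism $\Phi\colon\mr R\times\mr\Theta\to\mr T$ to pull $\omega$ back, factor it as a wedge of a form on $R$ and a form pulled back from $\Theta$, and then apply Fubini. The key intermediate goal is the identity
\begin{equation*}
  \Phi^*\omega = \omega_R \wedge \omega_F,
\end{equation*}
where $\omega_R$ denotes the normalized volume form on $R$ lifted to $R\times\Theta$ via the projection to the first factor.

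To establish this identity, I would lift the whole picture one dimension up: the same formulas $\lambda_i=\rho_j\theta_i$ extend to a diffeomorphism $\bR^V_{>0}\cong\bR^J_{>0}\times\Theta$, and through this diffeomorphism the quasi-spherical projection $\bR^V_{>0}\to\mr T_V$ factors as the product of the quasi-spherical projection $\bR^J_{>0}\to\mr R$ with the identity on $\Theta$. Write $\omega_\Theta$ for the normalized volume form on $\Theta$; then the normalized volume form on $\mr T_V\cong\mr R\times\mr\Theta$ is $\omega_R\wedge\omega_\Theta$, so Proposition~\ref{prop:quasisphere} applied on $\bR^V_{>0}$ and on $\bR^J_{>0}$ gives $\omega_V=\omega_J\wedge\pi_\Theta^*\omega_\Theta$ on $\bR^V_{>0}\cong\bR^J_{>0}\times\Theta$. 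On the other hand, applying Proposition~\ref{prop:quasisphere} to each factor $\bR^{V_j}_{>0}\to\mr T_j$ shows $\omega_j=\pi_{T_j}^*\omega_{T_j}$, whence $\omega_F=\bigwedge_j\omega_j=\pi_\Theta^*\omega_\Theta$. Combining, $\omega_V=\omega_J\wedge\omega_F$ on $\bR^V_{>0}$, and restricting to $T=\{\rho_V=1\}$, which corresponds to $R\times\Theta$, yields the claimed $\Phi^*\omega=\omega_R\wedge\omega_F$, since Proposition~\ref{prop:intwhitney} identifies $\omega_J|_R$ with $\omega_R$.

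With the factorization in hand, Fubini finishes the proof. Since $\omega_F=\pi_\Theta^*\omega_\Theta$ is constant in $\brho$,
\begin{equation*}
  \psi_{F,\brho} = \int_{R_\brho}\omega_R\wedge\omega_F = \left(\int_{R_\brho}\omega_R\right)\omega_F = p_F(\brho)\,\omega_F,
\end{equation*}
where $p_F(\brho):=\int_{R_\brho}\omega_R$ is the normalized volume of $R_\brho$ inside $R$. Because $\omega_R$ and $\omega_\Theta$ each integrate to $1$, this coincides with the relative volume of $R_\brho\times\Theta$ inside $R\times\Theta\cong T$, as stated.

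The main obstacle is the factorization $\omega_V=\omega_J\wedge\omega_F$ in the first step. Conceptually, it expresses a ``functoriality'' of quasi-spherical projection with respect to the partition of $V$; practically, it requires care with orientations, but the convention that $R$ is oriented so that $\Phi$ preserves orientation is exactly what makes all the signs consistent.
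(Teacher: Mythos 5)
Your overall strategy---factor the pulled-back volume form as (radial factor) $\wedge\,\omega_F$ and apply Fubini---is sound and close in spirit to the paper's argument, but the central identity you assert, $\Phi^*\omega=\omega_R\wedge\omega_F$ with $\omega_R$ the \emph{normalized, constant-coefficient} volume form on $R$, is false. The hidden error is the sentence ``the normalized volume form on $\mr T_V\cong\mr R\times\mr\Theta$ is $\omega_R\wedge\omega_\Theta$'': the quasi-cylindrical map $\Phi$ is not volume-preserving, so the normalized volume form on $T_V$ does not pull back to the product of the normalized volume forms on the factors, and Proposition~\ref{prop:quasisphere} applied to $\bR_{>0}^J$ cannot be combined with the factorization of the quasi-spherical projection in the way you propose. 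The paper's own worked example with $V=\{0,1,2\}$ and $F=(\{0\},\{1,2\})$ already shows this: there
\begin{equation*}
  \Phi^*\omega=(\theta_1\d\theta_2-\theta_2\d\theta_1)\wedge\frac{2\rho_1\,\d\rho_0}{(\rho_0+\rho_1)^2},
\end{equation*}
and the radial factor $2r_1\,\d r_0$ on $\{r_0+r_1=1\}$ integrates to $1$ but is not $\pm\d r_0$. In general the radial factor is a Dirichlet-type density proportional to $\prod_j r_j^{n_j}$ times $\omega_R$, not Lebesgue measure on $R$; this is exactly the computation $\d\lambda_{V_j}=\frac{\rho_j^{n_j}}{n_j!}\d\rho_j\wedge\omega_j$ that comes out of Lemma~\ref{lem:intwhitney} applied to each $V_j$.

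The error propagates to your conclusion: $\int_{R_\brho}\omega_R$ is the relative \emph{Lebesgue} volume of $R_\brho$ in $R$, which differs from the relative volume of $R_\brho\times\Theta$ in $T$; in the example you would get $p_F=\rho_0$ instead of the correct $2\rho_0-\rho_0^2$. Two repairs are available. (i) Keep your Fubini structure but use the correct weighted radial factor $\propto\br^F\bigwedge_j\frac{\d r_j}{r_j}$ restricted to $r=1$; integrating that over $R_\brho$ does give the relative volume of $R_\brho\times\Theta$ in $T$. (ii) Follow the paper's softer route: contract $\omega$ with vectors tangent to the $\Theta$ factor and observe that the resulting coefficients, expressed in the $\d\theta_i$, depend only on $\brho$; conclude that $\psi_{F,\brho}$ is a \emph{constant} multiple of $\omega_F$ without identifying the radial density, and then determine the constant by integrating both sides over $\Theta$, which yields $p=\int_{R_\brho\times\Theta}\omega$ directly. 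Your correct observations that the quasi-spherical projection factors and that $\omega_F$ is the pullback of the normalized volume form on $\Theta$ (Corollary~\ref{cor:omegavol}) survive intact; only the radial factor needs to be handled honestly.
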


\begin{proof}
  Let $Y$ be a vector in $\Theta$, and let $X$ be its lift to a vector tangent to the $\Theta$ factor in the product decomposition $R\times\Theta$, which, in particular, means that $\d\rho_j(X)=0$ for all $j$. Consequently, by $\lambda_i=\rho_j\theta_i$, we have that $\d\lambda_i(X)=\rho_j\d\theta_i(X)$. The important thing to note is that the coefficient of $\d\theta_i$ depends only on the $\rho$ coordinates and not on the $\theta$ coordinates. Now let $Y_1,\dotsc,Y_k$, and, respectively, $X_1,\dotsc,X_k$ be $k$ such vectors, and let $\eta$ be the $(n-k)$-form obtained by contracting $\omega$ on the right with $X_1,\dotsc,X_k$. Since $\omega$ can be expressed as a constant multiple of a wedge product of the $\d\lambda_i$, we conclude that $\eta$ likewise has a coefficient that depends only on the $\rho$ coordinates and not on the $\theta$ coordinates, when expressed in terms of the $\d\theta_i$. Fixing a $\brho\in\mr R$, to obtain $\psi_{F,\brho}(Y_1,\dotsc,Y_k)$, we then integrate $\eta$ over each $R_\brho\times\{\btheta\}$ for $\btheta\in\Theta$. We can conclude that the $k$-form $\psi_{F,\brho}$ on the $k$-dimensional space $\Theta$ has a \emph{constant} coefficient when expressed in terms of the $\d\theta_i$. But note that the volume form on each $T_j$ likewise has a constant coefficient when expressed in terms of the $\d\theta_i$. We conclude that $\psi_{F,\brho}$ is a constant multiple of $\omega_F$, the normalized volume form on $\Theta_F=\prod_jT_j$. So, we have that
  \begin{equation*}
    \psi_{F,\brho}=p\omega_F,
  \end{equation*}
  for some function $p$ that depends on $\brho$, but does not depend on the $\theta$ variables or, equivalently, does not depend on the position $\btheta\in\Theta$.

  From here, we can compute $p$ explicitly by integrating both sides over $\Theta$. On the right-hand side, since $\omega_F$ integrates to $1$ over $\Theta$, we simply have $p$. So then,
  \begin{equation*}
    p=\int_\Theta\psi_{F,\brho}=\int_\Theta\int_{R_\brho}\omega=\int_{R_\brho\times\Theta}\omega,
  \end{equation*}
  which is the relative volume of $R_\brho\times\Theta$ in $T$. Note that here we used the fact that the orientation of $R$ and hence $R_\brho$ was chosen so that the orientation of $R\times\Theta$ matches that of $T$.
\end{proof}

\subsection{Unisolvence}
We now discuss the degrees of freedom, that is, the dual basis, for the blow-up Whitney forms.

\begin{definition}\label{def:dof}
  Fix a set $V$ and a flag $F=(V_0,V_1,\dotsc,V_{n-k})$. As before, let $T=T_V$, $R=R_F=T_J$, $T_j=T_{V_j}$, and $\Theta=\prod_{j\in J}T_j$, and recall the isomorphism between the interior of $T$ and the interior of $R\times\Theta$, where $\dim R=n-k$ and $\dim\Theta=k$. We will define the \emph{degree of freedom} $\Psi_F$ associated to $F$, a functional from $k$-forms on $T$ to the real numbers, as follows.

  Let $\psi$ be a $k$-form on $T$. First, for any $\brho\in\mr R$, we let $\psi_\brho$ be the pullback of $\psi$ to $\mr\Theta$ under the inclusion $\mr\Theta\cong\{\brho\}\times\mr\Theta\hookrightarrow T$. In other words, for $Y_1,\dotsc,Y_k$ vectors at a point $\btheta\in\Theta$, we let $x\in T$ be the point in $T$ mapping to $(\brho,\btheta)$ under the isomorphism $\mr T\to\mr R\times\mr\Theta$, and we let $X_1,\dotsc,X_k$ be the vectors at $x$ tangent to the $\Theta$ factor of $\mr R\times\mr\Theta$ that project to $Y_1,\dotsc,Y_k$ under the map $\mr T\cong\mr R\times\mr\Theta\to\mr\Theta$. Then $\psi_\brho(Y_1,\dotsc,Y_k):=\psi(X_1,\dotsc,X_k)$.

  So, we now have a set of $k$-forms on the $k$-dimensional space $\Theta$ parametrized by $\brho$. We compute the following limit of the $\psi_\brho$ followed by integrating over $\Theta$.
  \begin{equation*}
    \Psi_F(\psi):=\int_\Theta\lim_{\rho_1\to0}\dotsb\lim_{\rho_{n-k}\to0}\psi_\brho.
  \end{equation*}
  Here, the limits are taken pointwise on $\mr\Theta$. In particular, fixing a point in $\mr\Theta$ means that when we take the limit $\lim_{\rho_j\to0}$, we take the limit ``by dilation'', that is, along a path where the ratios of the $\lambda_i$ for $i\in V_j$ are fixed. In addition, as implied by the sequential limits, the $\rho_j$ remain nonzero until that limit is taken: In other words, when we take the limit $\rho_{n-k}\to0$, we approach points in the interior of the face $\rho_{n-k}=0$ of $R$; with the next limit, we approach points in the interior of the subface of $\rho_{n-k}=0$ defined by $\rho_{n-k-1}=0$, and so forth. In particular, since we are on the interior of $\Theta$, the $\lambda_i$ also remain nonzero until we take $\lim_{\rho_j\to0}$ where $i\in V_j$.
  
  Note that we could also write down a more natural formula
  \begin{equation*}
    \Psi_F(\psi):=\int_\Theta\lim_{\rho_0\to0}\lim_{\rho_1\to0}\dotsb\lim_{\rho_{n-k}\to0}\psi_\brho.
  \end{equation*}
  provided we extend $\psi$ with appropriate homogeneous scaling to the orthant $\bR_{>0}^V$.
\end{definition}

\begin{remark}
  The degrees of freedom may be undefined if the limits or integral fail to converge. As we will see, they are well-defined on the blow-up Whitney forms. However, even on blow-up Whitney forms, while we have pointwise convergence of the $\psi_\brho$, we generally do not expect $L^1$ convergence, and so it matters that the limits are taken before the integral.
\end{remark}
\begin{remark}
  More generally, we will later define the blow-up $\tl T$ of $T$, and we will show in Proposition~\ref{prop:dof} that the degrees of freedom above are equivalent to integration over the $k$-dimensional faces of $\tl T$. In particular, the degrees of freedom are defined on all $k$-forms that are smooth on $\tl T$ (a weaker condition than smoothness on $T$).
\end{remark}

\begin{theorem}[Unisolvence]
  The basis $\Psi_F$ is dual to the basis $\psi_F$.
\end{theorem}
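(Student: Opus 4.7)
The plan is to compute $\Psi_F(\psi_{F'})$ directly, by writing $\psi_{F'}=p_{F'}\omega_{F'}$ via Proposition~\ref{prop:psiexplicit} and showing the result is $1$ when $F=F'$ and $0$ otherwise.

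In the diagonal case $F=F'$, Corollary~\ref{cor:omegavol} says $\omega_F$ is the pullback of the normalized volume form on $\Theta_F$ via the quasi-cylindrical projection, so the restriction $\omega_F|_{\{\brho\}\times\Theta_F}$ equals that normalized volume form and is in particular independent of $\brho$. Hence $(\psi_F)_\brho = p_F(\brho)\,\omega_F|_{\Theta_F}$, and the sequential limit in Definition~\ref{def:dof} acts only on the scalar $p_F(\brho)$. The key point about $p_F$ is that its defining constraints $r_{j-1}/\rho_{j-1}\le r_j/\rho_j$ become trivial in the limit $\rho_j\to 0^+$: for any $r_j>0$ (a full-measure condition), the ratio $r_j/\rho_j$ diverges. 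Each successive limit $\rho_{n-k}\to 0,\dotsc,\rho_1\to 0$ therefore erases one of these constraints, $R_{F,\brho}\times\Theta_F$ fills $T$ up to measure zero, and $p_F(\brho)\to 1$. Integrating then gives $\Psi_F(\psi_F) = \int_{\Theta_F}\omega_F = 1$.

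For the off-diagonal case $F\neq F'$, I would identify the step of the sequential limit at which the fiber-restricted integrand vanishes, splitting into two subcases. If $F$ and $F'$ have the same underlying partition but differ only as orderings, then up to a sign coming from the orientations of $\Theta_F$ and $\Theta_{F'}$ one has $\omega_{F'}=\omega_F$, since this form depends only on the partition; the computation reduces to $\lim_{\rho_1\to 0}\cdots\lim_{\rho_{n-k}\to 0}p_{F'}(\brho')$. But $p_{F'}(\brho')$ is the relative volume of the region where the ratios $r_j/\rho_j$ appear in $F'$-order, and the limit $\rho_{n-k}\to 0^+$ sends $r_{n-k}/\rho_{n-k}$ to infinity on a full-measure set, forcing the index $n-k$ into the last slot of any surviving ordering. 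This is compatible with $F$ but not with $F'$ unless $V'_{n-k}=V_{n-k}$. Induction on the flag length, matching the last position at each stage, then yields at the first mismatch a constraint in $R_{F',\brho'}$ that fails in measure, driving $p_{F'}(\brho')\to 0$.

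If instead the partitions underlying $F$ and $F'$ genuinely differ, let $j^*$ be the largest index for which the tail unions $\bigcup_{j\ge j^*}V_j$ and $\bigcup_{j\ge j^*}V'_j$ disagree. After the limits $\rho_{n-k}\to 0,\dotsc,\rho_{j^*+1}\to 0$ one has restricted to the face of $T$ on which $\lambda_i=0$ for $i\in\bigcup_{j>j^*}V_j$. Expressing $\omega_{F'}$ in the $F$-quasi-cylindrical coordinates and restricting to the fiber $\{\brho\}\times\Theta_F$ reveals that some $d\lambda_i$ with $i\in V_{j^*}$ becomes $\rho_{j^*}\,d\theta_i$ on the fiber, while the corresponding denominators $(\rho'_{j'})^{|V'_{j'}|}$ in $\omega_{F'}$ remain bounded as $\rho_{j^*}\to 0$; the leftover factor of $\rho_{j^*}^{m}$ with $m\ge 1$ then kills the integrand in the limit $\rho_{j^*}\to 0$. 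I expect this last subcase to be the main obstacle, since tracking the interplay between the $F$- and $F'$-quasi-cylindrical Jacobians requires careful bookkeeping; a more streamlined proof is likely to emerge from the Poisson process interpretation developed later in the paper, which can repackage both subcases as a single combinatorial statement about orderings of arrival times.
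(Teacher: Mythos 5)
Your overall strategy coincides with the paper's: split into the diagonal case, the case of equal partitions with different orderings, and the case of genuinely different partitions. The first two cases are handled essentially as in the paper's proof and are fine: the diagonal case reduces to the constraints of Definition~\ref{def:rs} becoming vacuous one at a time so that $p_F\to1$, and the permuted case reduces to an out-of-order limit forcing $R_{\brho'}$ to collapse to measure zero. (Your ``peel off the last slot and induct'' phrasing and the paper's ``find an adjacent inversion'' phrasing are interchangeable here.)

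The gap is in the third case, which is where the paper spends all of its technical effort (Lemmas~\ref{lem:unisolvence1}--\ref{lem:unisolvence3} and the counting argument for~\eqref{eq:limfracrho}). Your proposed mechanism --- that at $j^*$, the largest index where the tail unions disagree, the numerator picks up a factor $\rho_{j^*}^m$ while ``the corresponding denominators $(\rho'_{j'})^{\abs{V'_{j'}}}$ remain bounded'' --- is false in general. Take $V=\{0,1,2,3,4\}$, $F=(\{0\},\{4\},\{1,2\},\{3\})$ and $F'=(\{0,4\},\{1\},\{2\},\{3\})$. Then $j^*=2$ and $V_{j^*}=\{1,2\}$, but as $\rho_{2}=\lambda_1+\lambda_2\to0$ the denominators $\rho'_1=\lambda_1$ and $\rho'_2=\lambda_2$ both tend to zero, contributing total vanishing order $2=\abs{V_{j^*}}$, so the ratio $\brho^F/{\brho'}^{F'}$ does \emph{not} tend to zero at step $j^*$; the actual decay only appears later, at the step $\rho_1=\lambda_4\to0$, where no $\rho'_{j'}$ newly vanishes because $0\in V'_0=\{0,4\}$. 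So one cannot localize the vanishing at a single predetermined step; what is actually needed is the paper's bookkeeping: at \emph{every} step the numerator's order of vanishing is at least the denominator's, with equality at a given step if and only if each $V_j$ is contained in or disjoint from the relevant $V'_{j'}$, and equality at every step forces the two partitions to coincide --- a contradiction. In addition, your reduction of $\Psi_F(\psi_{F'})$ to the scalar ratio ${\brho}^{F}/{\brho'}^{F'}$ times a quantity bounded pointwise on $\mr\Theta_F$ is asserted rather than proved; this is precisely the content of Lemmas~\ref{lem:unisolvence1}--\ref{lem:unisolvence3}, which control $\phi_{F'}$ evaluated on vectors tangent to the $\Theta_F$ fiber. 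Without both ingredients the off-diagonal vanishing for distinct partitions is not established.
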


\begin{proof}
  We first show that $\Psi_F(\psi_F)=1$. With notation as above, let $\brho\in R$. Comparing the definition of the degrees of freedom and the definition of the blow-up Whitney forms, we see that $\left(\psi_F\right)_\brho$ from Definition~\ref{def:dof} is just $\psi_{F,\brho}=\int_{R_\brho}\omega$ from Definition~\ref{def:blowupwhitney}. So, we then have that
  \begin{equation*}
    \Psi_F(\psi_F)=\int_\Theta\lim_{\rho_1\to0}\dotsb\lim_{\rho_{n-k}\to0}\int_{R_\brho}\omega,
  \end{equation*}
  where we recall that $\omega$ is the volume form for $T$ scaled so that the volume of $T$ is one.

  Next, we recall Definition~\ref{def:rs} of $R_\brho$. As we send $\rho_{n-k}\to0$ while keeping the other $\rho_j$ nonzero, the restriction $\frac{r_{n-k-1}}{r_{n-k}}\le\frac{\rho_{n-k-1}}{\rho_{n-k}}$ becomes vacuously true, while the other inequalities $\frac{r_{j-1}}{r_j}\le\frac{\rho_{j-1}}{\rho_j}$ are unaffected. As we continue the iterated limits, when we reach $\rho_j\to0$, the inequality $\frac{r_{j-1}}{r_j}\le\frac{\rho_{j-1}}{\rho_j}$ becomes vacuously true, the inequalities for smaller values of $j$ are unaffected, and the inequalities for larger values of $j$ were eliminated at an earlier step. Finally, when we send $\rho_1\to0$, we vacuously satisfy $\frac{r_0}{r_1}\le\frac{\rho_0}{\rho_1}$. Therefore, regarding this limiting procedure as acting on the set $R_\brho$, we wind up with $R_\brho$ converging to the whole set $R$. We thus conclude that
  \begin{equation*}
    \Psi_F(\psi_F)=\int_\Theta\int_R\omega=\int_{R\times\Theta}\omega=\int_T\omega=1,
  \end{equation*}
  where we recall that the orientation of $R$ was selected so that the orientation of $R\times\Theta$ matches the orientation of $T$.

  Now, we show that $\Psi_{F'}(\psi_F)=0$ when $F'\neq F$. We will first consider the situation where $F$ and $F'$ have the same unordered partition of $V$, but the order is different. In this case, up to orientation, we can identify $R$ and $R'$, as well as $\Theta$ and $\Theta'$, since we just permute the vertices and product factors, respectively. With this identification, the computation is exactly the same as above, but we do the iterated limit in the wrong order. So then, there must exist a $j$ where we send $\rho_{j-1}\to0$ before we send $\rho_j\to0$. In this case, the inequality $\frac{r_{j-1}}{r_j}\le\frac{\rho_{j-1}}{\rho_j}$ forces $r_{j-1}$ to be $0$, at least on the open set where $r_j\neq0$. This then forces $R_\brho$ to converge to a set with area zero in $R$, so $\int_{R_\brho}\omega$ converges to zero.

  We now consider the more involved case of showing that $\Psi_{F'}(\psi_F)=0$ when $F$ and $F'$ have different unordered partitions. This part of the proof relies on an inequality whose proof we postpone to Lemmas~\ref{lem:unisolvence1}--\ref{lem:unisolvence3}. As in Definition~\ref{def:dof}, we let $X_1',\dotsc,X_k'$ be vectors tangent to the $\Theta'$ factor of the decomposition $\mr T\cong\mr R'\times\mr\Theta'$, and we let $Y_1',\dotsc,Y_k'$ be the corresponding vectors in $\Theta'$. So then, using Proposition~\ref{prop:psiexplicit} and Lemma~\ref{lem:unisolvence3}, we have
  \begin{multline*}
    \abs{\psi_F(X_1',\dotsc,X_k')}\le\brho^{-F}\abs{\phi_F(X_1',\dotsc,X_k')}\\
    \le \frac{F!}{F'!}\brho^{-F}\abs{\phi_{F'}(X_1',\dotsc,X_k')}=\frac{F!}{F'!}\frac{{\brho'}^{F'}}{\brho^F}\abs{\omega_{F'}(Y_1',\dotsc,Y_k')},
  \end{multline*}
  where we recall that
  \begin{itemize}
  \item $\brho^{F}$ is shorthand for $\prod_j\rho_j^{\abs{V_j}}$,
  \item $F!$ is shorthand for $\prod_jn_j!$ where $n_j=\dim T_j=\abs{V_j}-1$,
  \item $\phi_F$ is shorthand for $\bigwedge_j\phi_{V_j}$, and
  \item $\omega_F$ is shorthand for $\bigwedge_j\omega_j=\brho^{-F}\phi_F$, which we recall is the volume form on $\Theta$ normalized to have integral one.
  \end{itemize}

  Thus, as, top-level forms on $\Theta'$, we have
  \begin{equation*}
    \abs{\left(\psi_F\right)_{\brho'}}\le\frac{F!}{F'!}\frac{{\brho'}^{F'}}{\brho^F}\abs{\omega_{F'}}.
  \end{equation*}
  So then it suffices to show that $\frac{{\brho'}^{F'}}{\brho^F}$ converges to zero as we take the sequence of limits. Specifically, we claim that
  \begin{equation}\label{eq:limfracrho}
    \lim_{\rho_1'\to0}\dotsb\lim_{\rho_{n-k}'\to0}\frac{\prod_{j=0}^{n-k}{\rho'_j}^{\abs{V'_j}}}{\prod_{j=0}^{n-k}\rho_j^{\abs{V_j}}}=0,
  \end{equation}
  provided, as assumed in this case, that the $V_j'$ are not simply a permutation of the $V_j$.

  When we take the limit as $\rho'_{n-k}\to0$, the numerator vanishes to order $\abs{V_{n-k}'}$. Now consider the denominator. If $V_j\subseteq V'_{n-k}$, then $\rho_j\to0$, so the corresponding factor vanishes to order $\abs{V_j}$. Otherwise, $V_j$ contains an $i$ not in $V'_{n-k}$, so $\rho_j$ does not go to zero. So, to count the order of vanishing of the denominator, we count the number of $i\in V'_{n-k}$ such that $i\in V_j\subseteq V'_{n-k}$. So, the order of vanishing of the denominator is at most $\abs{V_{n-k}'}$, with equality if and only if all $V_j$ are either contained in or disjoint from $V'_{n-k}$. Unless this condition holds, the limit is zero.

  Assume then for the sake of contradiction that the sequence of limits \eqref{eq:limfracrho} is nonzero. Then, by the same reasoning, for $1\le j'\le n-k$, all $V_j$ are contained in or disjoint from $V'_{j'}$. (Note that this implies that the same holds for $j'=0$ as well, since $V'_0$ is the complement of the union of the $V'_{j'}$ for $j'\ge1$.) This contradicts the assumptions that $F$ and $F'$ partition $V$ into the same number of sets, but not the same exact sets.
\end{proof}

We now prove the postponed lemmas, starting with a definition needed only for the lemmas.

\begin{definition}
  Let $F$ be a flag with $n+1-k$ subsets $(V_0,\dotsc,V_{n-k})$ as above. Let $W$ be an $(n+1-k)$-element subset of $V$. We say that $W$ is \emph{distinguished} by $F$ if every $V_j$ contains exactly one element of $W$.
\end{definition}

\begin{lemma}\label{lem:unisolvence1}
  Let $F$ and $F'$ be two flags, each with $n+1-k$ subsets. Then
  \begin{equation*}
    \left(\bigwedge_{j=0}^{n-k}\d\rho_j'\right)(X_{\id,0},\dotsc,X_{\id,n-k})=\sum\pm\lambda_W,
  \end{equation*}
  where
  \begin{equation*}
    \rho_j'=\sum_{i\in V_j'}\lambda_i,\qquad X_{\id,j}=\sum_{i\in V_j}\lambda_i\pp{\lambda_i},\qquad \lambda_W=\prod_{i\in W}\lambda_i,
  \end{equation*}
  and the sum is taken over all $W$ that are distinguished by both $F$ and $F'$.
\end{lemma}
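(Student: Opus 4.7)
The plan is to recognize the left-hand side as the determinant of the matrix $M_{j,l} = (d\rho_j')(X_{\id,l})$ and then expand that determinant combinatorially.

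First I would evaluate the entries. Since $d\rho_j' = \sum_{i \in V_j'} d\lambda_i$ and $X_{\id,l} = \sum_{i \in V_l} \lambda_i \, \partial/\partial\lambda_i$, we get
\[
  M_{j,l} = (d\rho_j')(X_{\id,l}) = \sum_{i \in V_j' \cap V_l} \lambda_i.
\]
Since both $F$ and $F'$ have $n+1-k$ blocks, $M$ is a square matrix of that size, and
\[
  \left(\bigwedge_{j=0}^{n-k} d\rho_j'\right)(X_{\id,0},\dotsc,X_{\id,n-k}) = \det M = \sum_{\sigma} \operatorname{sgn}(\sigma) \prod_{j=0}^{n-k} \sum_{i \in V_j' \cap V_{\sigma(j)}} \lambda_i,
\]
where $\sigma$ ranges over permutations of $\{0,\dotsc,n-k\}$.

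Next I would distribute the products of sums. A generic term is obtained by choosing, for each $j$, an element $i_j \in V_j' \cap V_{\sigma(j)}$, contributing $\operatorname{sgn}(\sigma) \lambda_{i_0}\lambda_{i_1}\cdots\lambda_{i_{n-k}}$. Setting $W := \{i_0,\dotsc,i_{n-k}\}$, the elements are automatically distinct: each $i_j$ lies in $V_j'$, and the $V_j'$ form a partition, so the $i_j$ have pairwise distinct "primed" indices; likewise $i_j \in V_{\sigma(j)}$ together with $\sigma$ being a bijection forces distinct "unprimed" indices. Thus $\abs{W}=n+1-k$, and by construction $W$ meets each $V_j'$ and each $V_l$ in exactly one element, i.e., $W$ is distinguished by both $F$ and $F'$.

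Finally I would verify the correspondence is a bijection from contributing $(\sigma, (i_j))$-tuples to distinguished $W$'s. Given a distinguished $W$, let $i_j$ be the unique element of $W \cap V_j'$ and define $\sigma(j)$ to be the unique index $l$ with $i_j \in V_l$; then $\sigma$ is a permutation (because $W$ meets every $V_l$ exactly once), and $(\sigma,(i_j))$ is the unique preimage. Combining, every distinguished $W$ contributes a single term $\operatorname{sgn}(\sigma)\lambda_W$ to $\det M$, yielding
\[
  \det M = \sum_{W \text{ distinguished}} \operatorname{sgn}(\sigma_W)\, \lambda_W,
\]
which is the claimed formula. The only "obstacle" is simply being careful that the permutation $\sigma$ is well-defined from $W$; this is immediate from $W$ being a transversal for both partitions, so the proof is essentially bookkeeping on top of the standard determinant expansion.
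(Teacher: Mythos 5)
Your proof is correct and is essentially the same argument as the paper's: both are the standard multilinear expansion identifying the surviving terms with common transversals $W$ of the two partitions. The paper organizes the computation by expanding $\bigwedge_j\d\rho_j'$ into $\sum\pm\d\lambda_W$ (over $W$ distinguished by $F'$) and $\bigwedge_jX_{\id,j}$ into $\sum\pm\lambda_W\pp{\lambda_W}$ (over $W$ distinguished by $F$) and then pairing dual bases, whereas you pair first and expand $\det\bigl[\d\rho_j'(X_{\id,l})\bigr]$; the bookkeeping is identical.
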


\begin{proof}
  Since the $V_j'$ are a partition of $V$, we have that
  \begin{equation*}
    \bigwedge_{j=0}^{n-k}\d\rho'_j=\sum\pm\d\lambda_W,
  \end{equation*}
  where the sum is over all $W$ that are distinguished by $F'$, and, as before, $\d\lambda_W:=\bigwedge_{i\in W}\d\lambda_i$. Computing the same way for the multivector, we have
  \begin{equation*}
    \bigwedge_{j=0}^{n-k}X_{\id,j}=\sum\pm\lambda_W\pp{\lambda_W},
  \end{equation*}
  where the sum is over all $W$ that are distinguished by $F$, and $\pp{\lambda_W}=\bigwedge_{i\in W}\pp{\lambda_i}$.

  The claim follows since the $\d\lambda_W$ are dual to the $\pp{\lambda_W}$.
\end{proof}

\begin{lemma}\label{lem:unisolvence2}
  With notation as above, on $\bR_{>0}^V$, we have
  \begin{equation*}
    \abs{\left(\bigwedge_{j=0}^{n-k}\frac{\d\rho_j'}{\rho_j'}\right)(X_{\id,0},\dotsc,X_{\id,n-k})}\le1
  \end{equation*}
  with equality if and only if the flags $F$ and $F'$ have the same partition, possibly in a different order.
\end{lemma}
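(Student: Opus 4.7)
The plan is to combine Lemma~\ref{lem:unisolvence1} with an explicit expansion of the denominator $\prod_j \rho_j'$ and then apply the triangle inequality. First, distributing,
\begin{equation*}
  \prod_{j=0}^{n-k}\rho_j' = \prod_{j=0}^{n-k} \sum_{i \in V_j'} \lambda_i = \sum_{W'} \lambda_{W'},
\end{equation*}
where the sum runs over subsets $W' \subseteq V$ distinguished by $F'$; indeed, since the $V_j'$ are disjoint, a choice of one representative from each $V_j'$ determines and is determined by such a $W'$. Combining with Lemma~\ref{lem:unisolvence1},
\begin{equation*}
  \left(\bigwedge_{j=0}^{n-k}\frac{\d\rho_j'}{\rho_j'}\right)(X_{\id,0},\dotsc,X_{\id,n-k}) = \frac{\sum_W \pm \lambda_W}{\sum_{W'} \lambda_{W'}},
\end{equation*}
where the numerator is summed over $W$ distinguished by both $F$ and $F'$.

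Since all $\lambda_i > 0$ on $\bR_{>0}^V$, the triangle inequality together with the observation that any $W$ distinguished by both flags is in particular distinguished by $F'$ gives
\begin{equation*}
  \left|\sum_W \pm \lambda_W\right| \le \sum_W \lambda_W \le \sum_{W'} \lambda_{W'},
\end{equation*}
which establishes the bound. The first inequality is strict unless all signs in the numerator agree; the second is strict unless every $W'$ distinguished by $F'$ is also distinguished by $F$.

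The main obstacle is showing that these equality conditions force the partitions of $F$ and $F'$ to coincide. Assume the second equality condition holds. For any $a \in V_j'$, let $V_{j_1}$ denote the part of $F$ containing $a$. If there were some $b \in V_{j_1}$ with $b \notin V_j'$, then $b$ would lie in some $V_{j'}'$ with $j' \neq j$, and picking arbitrary representatives from the remaining $V_{j''}'$ would produce a $W'$ distinguished by $F'$ but with $\abs{W' \cap V_{j_1}} \ge 2$, contradicting the assumption that $W'$ is distinguished by $F$. Hence $V_{j_1} \subseteq V_j'$, so $V_j'$ decomposes as a disjoint union of full parts of $F$. Since a given $V_{j_1}$ can lie in at most one $V_j'$, and both flags have exactly $n+1-k$ parts, a counting argument forces each $V_j'$ to equal exactly one $V_{j_1}$; that is, the partitions agree up to reordering.

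For the converse, suppose $V_j' = V_{\sigma(j)}$ for some permutation $\sigma$ of $J$. Then $\d\rho_j'(X_{\id,j_1}) = \sum_{i \in V_j' \cap V_{j_1}} \lambda_i$ equals $\rho_{\sigma(j)} = \rho_j'$ when $j_1 = \sigma(j)$ and vanishes otherwise, so the standard determinant expansion of the wedge product yields $\bigwedge_j \d\rho_j'(X_{\id,0},\dotsc,X_{\id,n-k}) = \pm \prod_j \rho_j'$, achieving equality in absolute value after dividing by $\prod_j \rho_j'$.
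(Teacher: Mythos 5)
Your proof is correct and follows essentially the same route as the paper: expand $\prod_j\rho_j'$ over sets distinguished by $F'$, combine with Lemma~\ref{lem:unisolvence1}, use positivity and the triangle inequality, and analyze when the two index sets of distinguished subsets coincide. The only difference is that you spell out in detail the combinatorial step (that if every set distinguished by $F'$ is also distinguished by $F$, then each $V_j'$ is a union of $F$-parts and a counting argument forces the partitions to agree), which the paper asserts more briefly.
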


\begin{proof}
  The inequality follows from the preceding lemma, along with the fact that
  \begin{equation*}
    \prod_{j=0}^{n-k}\rho'_j=\sum\lambda_W,
  \end{equation*}
  where the sum is taken over all $W$ distinguished by $F'$.

  It is easy to check that equality holds if $F=F'$, since $\d\rho_j(X_{\id,j})=\rho_j$ and $\d\rho_j(X_{\id,l})=0$ for $l\neq j$. Next, if we permute the subsets of the partition, then $\left(\bigwedge_{j=0}^{n-k}\d\rho_j'\right)(X_{\id,0},\dotsc,X_{\id,n-k})$ is unchanged except up to sign, and $\prod_{j=0}^{n-k}\rho_j$ is unchanged. Thus, equality holds if $F$ and $F'$ have the same partition, possibly in a different order.

  Finally, if $F$ and $F'$ have different partitions, then, using the fact that both partitions have $n+1-k$ subsets, there exists a $W$ that is distinguished by $F$ but not $F'$, and vice versa. Therefore, in the preceding lemma, when we sum over $W$ that are distinguished by both $F$ and $F'$, we have strictly fewer terms than when we sum over $W$ that are distinguished by one of the flags. Recalling the positivity assumption in $\bR_{>0}^V$, we conclude that the inequalities are strict.
\end{proof}

\begin{lemma}\label{lem:unisolvence3}
  Let $X_1',\dotsc,X_k'$ be vectors tangent to the $\Theta'$ factor in the decomposition $\mr T\cong\mr R'\times\mr\Theta'$. Then
  \begin{equation*}
    \abs{\frac{\phi_F}{F!}(X_1',\dotsc,X_k')}\le\abs{\frac{\phi_{F'}}{F'!}(X_1',\dotsc,X_k')},
  \end{equation*}
  where, as before, $\phi_F$ is shorthand for $\bigwedge_j\phi_{V_j}$, and similarly for $\phi_{F'}$.
\end{lemma}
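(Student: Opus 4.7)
The plan is to express $\phi_F/F!$ as an interior product of $\d\lambda_V$, use Lemma~\ref{lem:intwhitney} to refactor $\d\lambda_V$ through $\phi_{F'}/F'!$, and then apply Lemma~\ref{lem:unisolvence2} to the resulting scalar multiplier. This sidesteps a direct comparison of the two wedge products and reduces the claim to the bound already proved for the $\rho'_{j'}$ factors.

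First I would observe that, since $X_{\id,j}$ has components only in the $V_j$-directions, $i_{X_{\id,j}}$ annihilates $\d\lambda_{V_{j'}}$ for $j'\neq j$. A routine iterated interior-product computation (together with the decomposition $\d\lambda_V=\pm\bigwedge_j\d\lambda_{V_j}$) then gives
$$\frac{\phi_F}{F!}=\bigwedge_j i_{X_{\id,j}}\,\d\lambda_{V_j}=\pm\,i_{X_{\id,0}\wedge\cdots\wedge X_{\id,n-k}}\,\d\lambda_V.$$
Next, applying Lemma~\ref{lem:intwhitney} to each factor $\d\lambda_{V'_{j'}}$ in the analogous decomposition $\d\lambda_V=\pm\bigwedge_{j'}\d\lambda_{V'_{j'}}$, and commuting the resulting $\d\rho'_{j'}/\rho'_{j'}$ factors past the $\phi_{V'_{j'}}/n'_{j'}!$ factors (at the cost of only signs), I would obtain
$$\d\lambda_V=\pm\left(\bigwedge_{j'}\frac{\d\rho'_{j'}}{\rho'_{j'}}\right)\wedge\frac{\phi_{F'}}{F'!}.$$

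Combining these two identities and evaluating on $(X_{\id,0},\dotsc,X_{\id,n-k},X_1',\dotsc,X_k')$ is the key step. Since each $X_l'$ is tangent to $\Theta'$, we have $\d\rho'_{j'}(X_l')=0$ for all $j'$; when one expands the wedge product as a sum over $(n+1-k,k)$-shuffles, every term that routes some $X_l'$ into the $\d\rho'/\rho'$-factor vanishes. Only the single shuffle that sends the $X_{\id,j}$ into $\bigwedge_{j'}\d\rho'_{j'}/\rho'_{j'}$ and the $X_l'$ into $\phi_{F'}/F'!$ contributes, giving
$$\frac{\phi_F(X_1',\dotsc,X_k')}{F!}=\pm\left(\bigwedge_{j'}\frac{\d\rho'_{j'}}{\rho'_{j'}}\right)(X_{\id,0},\dotsc,X_{\id,n-k})\cdot\frac{\phi_{F'}(X_1',\dotsc,X_k')}{F'!}.$$

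Lemma~\ref{lem:unisolvence2} then bounds the absolute value of the scalar multiplier by $1$, and the desired inequality follows on taking absolute values. The main potential nuisance is tracking the accumulated signs through the three reorderings (decomposing $\d\lambda_V$ according to $F$, decomposing $\d\lambda_V$ according to $F'$, and commuting the $\d\rho'_{j'}/\rho'_{j'}$ factors past the $\phi_{V'_{j'}}/n'_{j'}!$ factors), but since the final statement is an inequality of absolute values, no sign ever matters.
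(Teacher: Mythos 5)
Your proof is correct and follows essentially the same route as the paper's: express $\phi_F/F!$ as a contraction of $\d\lambda_V$ by the tautological fields $X_{\id,j}$, use the Lemma~\ref{lem:intwhitney}-style factorization of $\d\lambda_V$ through the $\d\rho'_{j'}/\rho'_{j'}$, kill all but one shuffle term using $\d\rho'_{j'}(X'_l)=0$, and invoke Lemma~\ref{lem:unisolvence2} to bound the surviving scalar by $1$. The only (immaterial) difference is organizational: the paper wedges $\bigwedge_{j'}\d\rho'_{j'}/\rho'_{j'}$ against $\phi_F/F!$ and evaluates on the primed tautological fields together with the $X'_l$, whereas you decompose $\d\lambda_V$ according to $F'$ and evaluate on the unprimed tautological fields, arriving at the same scalar identity.
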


\begin{proof}
  Let $X_{\id,0},\dotsc,X_{\id,n-k}$ be the tautological vector fields in the preceding lemmas, ad let $X'_{\id,0},\dotsc,X'_{\id,n-k}$ be the corresponding tautological vector fields for the sets $V'_j$ of the flag $F'$. By definition, we have
  \begin{equation*}
    i_{X_{\id,j}}\d\lambda_{V_j}=\frac{\phi_{V_j}}{n_j!}.
  \end{equation*}
  Additionally, by disjointness of the $V_j$, we have $i_{X_j}\d\lambda_{V_l}=0$ for $l\neq j$. We conclude that then
  \begin{equation*}
    i_{X_{\id,0}}\dotsm i_{X_{\id,n-k}}\d\lambda_V=\pm\frac{\phi_F}{F!}.
  \end{equation*}
  Proceeding similarly to the proof of Lemma~\ref{lem:intwhitney}, we then conclude that
  \begin{equation*}
    \left(\bigwedge_j\frac{\d\rho_j'}{\rho_j'}\right)\wedge\frac{\phi_F}{F!}=\pm\left(\left(\bigwedge_j\frac{\d\rho_j'}{\rho_j'}\right)(X_{\id,0},\dotsc,X_{\id,n-k})\right)\d\lambda_V.
  \end{equation*}
  The preceding lemma tells us that, as top-level forms, the right-hand side has magnitude at most $\d\lambda_V$.
  
  So then, we observe that $\frac{\d\rho_j'}{\rho_j'}$ evaluates to $1$ on $X_{\id,j}'$, evaluates to $0$ on $X_{\id,l}$ for $l\neq j$, and evaluates to $0$ on each of the vectors $X_1',\dotsc,X_k'$ because they are tangent to the $\Theta'$ factor. Thus, evaluating the previous equation on $X'_{\id,0},\dotsc,X'_{\id,n-k},X'_1,\dotsc,X_k'$ and applying the preceding lemma, we obtain
  \begin{equation*}
    \begin{split}
      \abs{\frac{\phi_F}{F!}(X_1',\dotsc,X_k')}&\le\abs{\d\lambda_V(X'_{\id,0},\dotsc,X'_{\id,n-k},X'_1,\dotsc,X_k')},
    \end{split}
  \end{equation*}
  with equality when $F=F'$.
\end{proof}

\subsection{Arrival times of Poisson process ensembles} \label{sec:poisson}
Recall from Proposition~\ref{prop:psiexplicit} that the blow-up Whitney forms can be expressed as
\begin{equation*}
  \psi_F=p_F\brho^{-F}\phi_F=p_F\omega_F,
\end{equation*}
where $p_F$ is the relative volume of $R_\brho\times\Theta$, $\phi_F$ is the wedge product of the Whitney forms for each $V_j$, and $\omega_F$ is the homogenization $\brho^{-F}\phi_F$, which is also the volume form on $\Theta$. Surprisingly, $p_F$ can be interpreted as a probability of a particular order of arrival times of an ensemble of Poisson processes. Poisson processes can be found in many probability textbooks; for the purposes of this paper, a reference that we recommend is \cite[Section~11.1.2]{pishronik2013probability}.

We first recall that if $l$ is the first arrival time of a Poisson process with rate $1$, then $l$ is distributed as $e^{-l}\d l$. Now if we have an ensemble of such processes, indexed by $i\in V$ with $\abs V=n+1$, then $r:=\sum_il_i$ is the $(n+1)$st arrival time of a Poisson process with rate $1$, and the values $\frac{l_i}r$ define a uniformly distributed point in the standard $n$-simplex, independent of $r$. Indeed, applying Lemma~\ref{lem:intwhitney}, we have
\begin{equation}\label{eq:poissonuniform}
  \bigwedge_{i\in V} e^{-l_i}\d l_i=e^{-r}\d l_V=e^{-r}\frac{\d r}r\wedge i_{X_{\id}}\d l_V=\frac{r^n}{n!}e^{-r}\d r\wedge n!i_{X_{\id}}\frac{\d l_V}{r^{\abs V}}.
\end{equation}
Per \cite{pishronik2013probability}, $\frac{r^n}{n!}e^{-r}\d r$ is the probability distribution of the arrival time of the $(n+1)$st particle, and per Proposition~\ref{prop:intwhitney} and scaling, $n!i_{X_{\id}}\frac{\d l_V}{r^{\abs V}}$ is the uniform distribution on the simplex $\sum_il_i=r$ for each $r$.

In particular, if $\chi$ is a random variable depending on the simplex coordinates $\frac{l_i}r$ but not on $r$, then its expected value with respect to the uniform distribution on the simplex is the same as its expected value with respect to the Poisson ensemble distribution. We state this fact more precisely.

\begin{proposition}\label{prop:RequivT}
  If $\chi$ is a dilation-invariant scalar-valued function on $\bR_{>0}^V$, then
  \begin{equation*}
    \int_{\bR_{>0}^V}\chi e^{-r}\d l_V=\int_{T}\chi\omega,
  \end{equation*}
  where $r=\sum_{i\in V} l_i$ and $\omega$ is the volume form on $T$ rescaled to have integral one.
\end{proposition}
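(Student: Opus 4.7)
The plan is to reduce the left-hand side to a product of a radial integral and an angular integral via the decomposition already established in equation~\eqref{eq:poissonuniform}, exploiting dilation invariance of $\chi$.

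First, I would rewrite
\begin{equation*}
  e^{-r}\d l_V=\frac{r^n}{n!}e^{-r}\d r\wedge\omega_V,
\end{equation*}
which is exactly equation~\eqref{eq:poissonuniform} with the Whitney-form factor recognized as $\omega_V$ in the $l$-coordinates. By Proposition~\ref{prop:quasisphere}, this $\omega_V$ is the pullback of the normalized volume form on $T_V$ under the quasi-spherical projection $\pi\colon\bR_{>0}^V\to\mr T_V$ given by $\theta_i=l_i/r$.

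Next, I would use that $\chi$ is dilation-invariant on $\bR_{>0}^V$, so $\chi(l)$ depends only on $\pi(l)\in T_V$. In particular, $\chi=\pi^*\chi_0$ for some function $\chi_0$ on $T_V$, and so $\chi\,\omega_V=\pi^*(\chi_0\omega)$ as forms on $\bR_{>0}^V$. Combining this with the identity above,
\begin{equation*}
  \int_{\bR_{>0}^V}\chi e^{-r}\d l_V=\int_{\bR_{>0}^V}\frac{r^n}{n!}e^{-r}\d r\wedge\pi^*(\chi_0\omega).
\end{equation*}

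Finally, I would parametrize $\bR_{>0}^V$ by $(r,\btheta)\in\bR_{>0}\times\mr T_V$, so that the integrand splits as a product. Applying Fubini gives
\begin{equation*}
  \int_{\bR_{>0}^V}\chi e^{-r}\d l_V=\left(\int_0^\infty\frac{r^n}{n!}e^{-r}\d r\right)\left(\int_{T_V}\chi_0\omega\right)=\int_{T_V}\chi\omega,
\end{equation*}
where the radial integral equals $1$ since it is the total mass of the Gamma $(n+1,1)$ density (equivalently, the probability distribution of the $(n+1)$st arrival time of a rate-one Poisson process, as noted in the text).

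The only real obstacle is making the change of coordinates rigorous: one needs to check that the map $l\mapsto(r,\pi(l))$ is an orientation-preserving diffeomorphism onto $\bR_{>0}\times\mr T_V$ and that pulling back $\frac{r^n}{n!}e^{-r}\d r\wedge\omega$ under its inverse recovers $e^{-r}\d l_V$, but this is exactly the content of the decomposition in equation~\eqref{eq:poissonuniform} combined with Proposition~\ref{prop:quasisphere}, so no new work is required.
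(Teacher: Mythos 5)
Your proposal is correct and follows essentially the same route as the paper: both hinge on the factorization $e^{-r}\d l_V=\frac{r^n}{n!}e^{-r}\d r\wedge\omega_V$ from Equation~\eqref{eq:poissonuniform}, use dilation invariance of $\chi$ (and of $\omega_V$) to make the angular integral independent of $r$, and then observe that the radial Gamma integral equals $1$. The paper phrases the Fubini step as integration over the level sets $rT$ while you phrase it via the quasi-spherical product coordinates $(r,\btheta)$, but this is the same decomposition.
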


\begin{proof}
  Note that $\omega=n!i_{X_{\id}}\frac{\d l_V}{r^{\abs V}}$ is invariant with respect to pullback under the dilation transformation. By the dilation invariance of $\chi$, we therefore have that $\int_{rT}\chi\omega$ does not depend on $r$. Therefore, by Equation~\eqref{eq:poissonuniform},
  \begin{multline*}
    \int_{\bR_{>0}^V}\chi e^{-r}\d l_V=\int_{\bR_{>0}}\left(\frac{r^n}{n!}e^{-r}\d r\left(\int_{rT}\chi\omega\right)\right)\\
    =\left(\int_{\bR_{>0}}\frac{r^n}{n!}e^{-r}\d r\right)\left(\int_{T}\chi\omega\right)=\int_{T}\chi\omega.\qedhere
  \end{multline*}
\end{proof}

We are now ready to interpret the $p_F$ function from Proposition~\ref{prop:psiexplicit} as a probability.

\begin{proposition}
  Let $F$ be a flag, and let $\rho_0,\dotsc,\rho_{n-k}$ be positive numbers. Consider an ensemble of $n+1-k$ Poisson processes, where the $j$th process has rate $\rho_j$. Let $t_j$ be the arrival time of the $\abs{V_j}$th particle of the $j$th Poisson process. Then the relative volume $p_F$ from Proposition~\ref{prop:psiexplicit} is the probability that
  \begin{equation*}
    t_0\le\dotsb\le t_{n-k}.
  \end{equation*}
\end{proposition}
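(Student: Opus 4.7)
The plan is to realize the arrival times $t_j$ in terms of independent unit exponentials on $\bR_{>0}^V$, rewrite the required probability as an integral against the Poisson ensemble density $e^{-r}\,\d l_V$, and then invoke Proposition~\ref{prop:RequivT} to pass to an integral over $T$ whose value, by construction, is precisely $p_F(\brho)$.

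First, I would replace the ensemble of Poisson processes by a family of iid unit exponentials: if $(l_i)_{i\in V}$ are iid with distribution $e^{-l_i}\,\d l_i$, then for each $j\in J$ the rescaled variables $\{l_i/\rho_j\}_{i\in V_j}$ are iid exponential with rate $\rho_j$, so they can be taken as the interarrival times of the $j$th Poisson process. The arrival time of its $\abs{V_j}$th particle is then
\[
  t_j \;=\; \frac{1}{\rho_j}\sum_{i\in V_j} l_i \;=\; \frac{r_j}{\rho_j}, \qquad r_j:=\sum_{i\in V_j}l_i.
\]
Hence the event $\{t_0\le t_1\le\cdots\le t_{n-k}\}$ is the event $A:=\{r_0/\rho_0\le\cdots\le r_{n-k}/\rho_{n-k}\}$, and its indicator $\chi:=\mathbf{1}_A$ is a function on $\bR_{>0}^V$ that is invariant under simultaneous dilation of all the $l_i$, since dilation scales every $r_j$ by the same positive factor and preserves the ordering.

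Next, I would apply Proposition~\ref{prop:RequivT} to this dilation-invariant $\chi$ to obtain
\[
  \Pr(t_0\le\cdots\le t_{n-k}) \;=\; \int_{\bR_{>0}^V}\chi\,e^{-r}\,\d l_V \;=\; \int_T \chi\,\omega.
\]
On $T$, the coordinates $r_j=\sum_{i\in V_j}\lambda_i$ are exactly the restrictions of the quasi-cylindrical radial coordinates $\rho_j$. So via the isomorphism $\mr T\cong \mr R\times\mr\Theta$, the event $A$ carves out precisely the region $R_\brho\times\Theta$ inside $T$, after normalizing the rate vector $\brho$ to lie in $R$---a normalization that is harmless because both $A$ and $R_\brho$ depend only on the ratios of the $\rho_j$ and so are invariant under rescaling by a common positive constant. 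Consequently $\int_T\chi\,\omega$ is the relative volume of $R_\brho\times\Theta$ in $T$, which is exactly $p_F(\brho)$ by Proposition~\ref{prop:psiexplicit}.

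The only real obstacle is bookkeeping: matching the $r_j$ coordinates on $\bR_{>0}^V$ with the quasi-cylindrical radial coordinates on $T$, checking dilation invariance, and noting the irrelevance of a common rescaling of the Poisson rates. Once these identifications are in place, the result is a direct application of Proposition~\ref{prop:RequivT} and requires no further analytic or geometric input.
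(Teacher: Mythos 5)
Your proposal is correct and follows essentially the same route as the paper's proof: realize $t_j=r_j/\rho_j$ with $r_j$ a sum of iid unit exponentials, observe that the event $t_0\le\dotsb\le t_{n-k}$ is the dilation-invariant set $R_\brho$ viewed in the orthant, and apply Proposition~\ref{prop:RequivT} to identify its probability with the relative volume of $R_\brho\times\Theta$ in $T$. The only quibble is a harmless slip of wording where you call the radial coordinates of the variable point on $T$ ``the restrictions of $\rho_j$''---these are the $r_j$ of Definition~\ref{def:rs}, while $\brho$ stays fixed---but the argument itself is sound.
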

Before we present the proof, we remark that, earlier, the $\rho_j$ were constrained to sum to one, but here there is no such constraint. On the other hand, the probability that $t_0\le\dotsb\le t_{n-k}$ is clearly invariant with respect to dilating $\brho$. So, properly, we are extending $p_F\colon T\to\bR$ from Proposition~\ref{prop:psiexplicit} to the unique dilation invariant function on $\bR_{>0}^V$.

\begin{proof}
  Let $r_j=\rho_jt_j$, so then $t_0\le\dotsb\le t_{n-k}$ is equivalent to
  \begin{equation*}
    \frac{r_0}{\rho_0}\le\dotsb\le\frac{r_{n-k}}{\rho_{n-k}},
  \end{equation*}
  which are exactly the inequalities defining $R_\brho$ in Definition~\ref{def:rs}.

  Since $t_j$ is the arrival time of the $\abs{V_j}$th particle of a Poisson process with rate $\rho_j$, we know that $r_j$ is distributed as the arrival time of the $\abs{V_j}$th particle of a Poisson process with rate $1$. So then, we can also view $r_j=\sum_{i\in V_j}l_i$, where $l_i$ are the \emph{first} arrival times of Poisson processes with rate $1$. (As per \cite{pishronik2013probability}, waiting for the $\abs{V_j}$th particle is equivalent to waiting for the first particle $\abs{V_j}$ consecutive times.)

  So now we can, additionally, view $R_\brho$ as a dilation-invariant subset of the orthant $\mathbb R_{>0}^V$ with coordinates $l_i$. Letting $\chi$ be its characteristic function, by Proposition~\ref{prop:RequivT} we conclude that
  \begin{equation*}
    \int_{R_\brho}e^{-r}\d l_V=\int_{R_\brho\cap T}\omega,
  \end{equation*}
  where, on the left, we view $R_\brho$ as a subset of $\mathbb R_{>0}^V$ as just discussed, and, on the right, we view $R_\brho$ as a subset of $T$ as per Definition~\ref{def:rs}. The right-hand side therefore gives the relative volume of $R_\brho$ (properly its preimage $R_\brho\times\Theta$) in $T$ as per Proposition~\ref{prop:psiexplicit}.

  Meanwhile, the left-hand side is the probability that we land in $R_\brho$ with respect to the probability distribution $e^{-r}\d l_V$. As we have discussed, landing in $R_\brho$ is equivalent to $t_0\le\dotsb\le t_{n-k}$. As we have also discussed, the probability distribution $e^{-r}\d l_V$ gives the arrival times $l_i$ of Poisson processes with rate $1$, which is equivalent to the $r_j$ being $\abs{V_j}$th arrival times of Poisson processes with rate $1$, which, in turn, is equivalent to the $t_j$ being the $\abs{V_j}$th arrival times of Poisson processes with rates $\rho_j$.
\end{proof}

We can apply the above proposition to quickly compute $p_F$ and hence $\psi_F$ using some basic combinatorics and probability.

\begin{example}\label{eg:poissonp}
  Let $F$ be the flag $01\{23\}$. Then $J$ is a three-element set, but to avoid confusion with $V=\{0,1,2,3\}$ we will use $J=\{a,b,c\}$ instead of $J=\{0,1,2\}$. So we have $\rho_a=\lambda_0$, $\rho_b=\lambda_1$, and $\rho_c=\lambda_2+\lambda_3$. So we ask for the probability that $t_a\le t_b\le t_c$, where $t_a$ is the first arrival time of particle $a$, $t_b$ is the first arrival time of particle $b$, and $t_c$ is the \emph{second} arrival time of particle $c$. We can ignore ``excess'' particles, that is, particles of type $a$ after the first, particles of of type $b$ after the first, and particles of type $c$ after the second. In doing so, there are only four particles we care about, and there are three possible sequences they can arrive in so that we have $t_a\le t_b\le t_c$, namely
  \begin{equation*}
    abcc,\qquad acbc,\qquad cabc.
  \end{equation*}

  Now, let us assess the probability of each of these sequences, starting with $abcc$. At any time, the relative probability that the next particle we receive is of type $j$ is $\rho_j$. So, the chance that we receive $a$ first is $\frac{\rho_a}{\rho_a+\rho_b+\rho_c}$. After that, we will ignore any further particles of type $a$, so the chance that $b$ is next is really the chance that $b$ is next among only the options $b$ and $c$, which is $\frac{\rho_b}{\rho_b+\rho_c}$. After that, we will also ignore any further particles of type $b$, so the chance that $c$ is next is $\frac{\rho_c}{\rho_c}=1$, and likewise for the last $c$ particle.

  Reasoning similarly, for $acbc$, the probability that $a$ is first is $\frac{\rho_a}{\rho_a+\rho_b+\rho_c}$, and the probability that $c$ is next (ignoring $a$) is $\frac{\rho_c}{\rho_b+\rho_c}$. But, unlike the previous calculation, we don't ignore $c$ at this point, since we are waiting for two $c$ particles in total. So the chance that $b$ is next is $\frac{\rho_b}{\rho_b+\rho_c}$. As before, since we now ignore both $a$ and $b$, the next particle is $c$ with probability $\frac{\rho_c}{\rho_c}=1$.

  Finally, for $cabc$, the chance that $c$ arrives first is $\frac{\rho_c}{\rho_a+\rho_b+\rho_c}$. The chance that $a$ arrives next is $\frac{\rho_a}{\rho_a+\rho_b+\rho_c}$. We now ignore $a$, so the chance that $b$ arrives next is $\frac{\rho_b}{\rho_b+\rho_c}$. And the last $c$ arrives with probability $1$.

  So, then, putting everything together, we will compute the total probability. To keep the equations manageable, we will use shorthand $\rho_{abc}=\rho_a+\rho_b+\rho_c$ and $\rho_{bc}=\rho_b+\rho_c$.
  \begin{equation*}
    p_F=\frac{\rho_a}{\rho_{abc}}\frac{\rho_b}{\rho_{bc}}+\frac{\rho_a}{\rho_{abc}}\frac{\rho_c}{\rho_{bc}}\frac{\rho_b}{\rho_{bc}}+\frac{\rho_c}{\rho_{abc}}\frac{\rho_a}{\rho_{abc}}\frac{\rho_b}{\rho_{bc}}=\frac{\rho_a\rho_b\rho_c}{\rho_{abc}\rho_{bc}}\left(\frac1{\rho_{abc}}+\frac1{\rho_{bc}}+\frac1{\rho_c}\right).
  \end{equation*}
  So, then, multiplying by $\omega_F=\frac{\lambda_0}{\rho_a}\frac{\lambda_1}{\rho_b}\frac{\phi_{23}}{\rho_c^2}$ (the first two factors are just $1$), we obtain
  \begin{equation*}
    \psi_F=\frac{\lambda_0\lambda_1\phi_{23}}{\rho_{abc}\rho_{bc}\rho_c}\left(\frac1{\rho_{abc}}+\frac1{\rho_{bc}}+\frac1{\rho_c}\right),
  \end{equation*}
  where $\rho_{abc}=\lambda_{0123}=\lambda_0+\lambda_1+\lambda_2+\lambda_3$, $\rho_{bc}=\lambda_{123}=\lambda_1+\lambda_2+\lambda_3$, and $\rho_c=\lambda_{23}=\lambda_2+\lambda_3$.
\end{example}

\begin{example}
  We will now do the same computation where $F$ is the flag $\{01\}\{23\}$, using the notation $J=\{a,b\}$, so $\rho_a=\lambda_0+\lambda_1$, and $\rho_b=\lambda_2+\lambda_3$. Now we want to receive two particles of both types, with the second arrival times $t_a$ and $t_b$ satisfying $t_a\le t_b$. There are once again three possibilities of arrival sequences of the particles we care about:
  \begin{equation*}
    aabb,\qquad abab,\qquad baab.
  \end{equation*}
  Using shorthand $\rho_{ab}=\rho_a+\rho_b$, the probability is
  \begin{equation*}
    p_F=\frac{\rho_a}{\rho_{ab}}\frac{\rho_a}{\rho_{ab}}+\frac{\rho_a}{\rho_{ab}}\frac{\rho_b}{\rho_{ab}}\frac{\rho_a}{\rho_{ab}}+\frac{\rho_b}{\rho_{ab}}\frac{\rho_a}{\rho_{ab}}\frac{\rho_a}{\rho_{ab}}=\frac{\rho_a^2\rho_b}{\rho_{ab}^2}\left(\frac2{\rho_{ab}}+\frac1{\rho_b}\right).
  \end{equation*}
  Multiplying by $\frac{\phi_{01}}{\rho_a^2}\wedge\frac{\phi_{23}}{\rho_b^2}$, we obtain
  \begin{equation*}
    \psi_F=\frac{\phi_{01}\wedge\phi_{23}}{\rho_{ab}^2\rho_b}\left(\frac2{\rho_{ab}}+\frac1{\rho_b}\right),
  \end{equation*}
  where $\rho_{ab}=\lambda_{0123}=\lambda_0+\lambda_1+\lambda_2+\lambda_3$ and $\rho_b=\lambda_{23}=\lambda_2+\lambda_3$.
\end{example}

\begin{example}
  We now compute for the flag $F=0\{123\}$, once again using $J=\{a,b\}$, this time with $\rho_a=\lambda_0$ and $\rho_b=\lambda_1+\lambda_2+\lambda_3$. Now we need one particle of type $a$ and three particles of type $b$, so the possible arrival sequences of the particles we care about, with $t_a\le t_b$, are
  \begin{equation*}
    abbb,\qquad babb,\qquad bbab.
  \end{equation*}
  Using shorthand $\rho_{ab}=\rho_a+\rho_b$, the probability is
  \begin{equation*}
    p_F=\frac{\rho_a}{\rho_{ab}}+\frac{\rho_b}{\rho_{ab}}\frac{\rho_a}{\rho_{ab}}+\frac{\rho_b}{\rho_{ab}}\frac{\rho_b}{\rho_{ab}}\frac{\rho_a}{\rho_{ab}}=\frac{\rho_a\rho_b^2}{\rho_{ab}}\left(\frac1{\rho_{ab}^2}+\frac1{\rho_{ab}\rho_b}+\frac1{\rho_b^2}\right).
  \end{equation*}
  Multiplying by $\frac{\lambda_0}{\rho_a}\frac{\phi_{123}}{\rho_b^3}$, we obtain
  \begin{equation*}
    \psi_F=\frac{\lambda_0\phi_{123}}{\rho_{ab}\rho_b}\left(\frac1{\rho_{ab}^2}+\frac1{\rho_{ab}\rho_b}+\frac1{\rho_b^2}\right),
  \end{equation*}
  where $\rho_{ab}=\lambda_{0123}=\lambda_0+\lambda_1+\lambda_2+\lambda_3$ and $\rho_b=\lambda_{123}=\lambda_1+\lambda_2+\lambda_3$.
\end{example}

The Poisson process framework can yield intuitive proofs of statements about blow-up Whitney forms. We begin with the claim that the space of blow-up Whitney forms contains the ordinary Whitney forms, which is shown in~\cite[Proposition 11.1]{brasselet1991simplicial} by direct algebraic calculation.

\begin{proposition} \label{prop:contains}
  The space of blow-up Whitney forms contains the usual Whitney forms.
\end{proposition}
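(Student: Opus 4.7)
\medskip

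The plan is to exhibit each classical Whitney form $\phi_W$ (for $W \subseteq V$ with $\abs W = k+1$) as an explicit sum of blow-up Whitney forms. Specifically, I will show
\begin{equation*}
  \phi_W = \sum_\sigma \psi_{F_\sigma}
\end{equation*}
on $T$, where the sum is over all orderings $\sigma = (i_1,\dotsc,i_{n-k})$ of $V\setminus W$, and $F_\sigma$ denotes the flag $(W,\{i_1\},\{i_2\},\dotsc,\{i_{n-k}\})$ whose first block is $W$ and whose remaining blocks are the singletons of $V\setminus W$ in order.

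The first step is purely formal. By Proposition~\ref{prop:psiexplicit}, $\psi_{F_\sigma}=p_{F_\sigma}\omega_{F_\sigma}$. Since every block of $F_\sigma$ after the first is a singleton, $\phi_{\{i\}}=\lambda_i$ and $\omega_{\{i\}}=1$, so $\omega_{F_\sigma}=\phi_W/\lambda_W^{\abs W}$ is the same for every $\sigma$. This lets me pull the differential-form factor out of the sum, reducing the claim to the scalar identity
\begin{equation*}
  \sum_\sigma p_{F_\sigma} = \left(\frac{\lambda_W}{\lambda_V}\right)^{\abs W},
\end{equation*}
since then $\sum_\sigma \psi_{F_\sigma} = \phi_W/\lambda_V^{\abs W}$, which equals $\phi_W$ on $T$ where $\lambda_V=1$.

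The second step is the probabilistic content. Using the Poisson-process interpretation from the previous subsection, $p_{F_\sigma}$ is the probability that $t_0\le t_{i_1}\le\dotsb\le t_{i_{n-k}}$, where $t_0$ is the $\abs W$-th arrival of a rate-$\lambda_W$ process and each $t_{i_j}$ is the first arrival of a rate-$\lambda_{i_j}$ process. Since these arrival times are almost surely distinct, the events indexed by different orderings $\sigma$ are disjoint, and their union is the event $\{t_0\le \min_{i\in V\setminus W} t_i\}$. Thus $\sum_\sigma p_{F_\sigma}$ is precisely the probability that the $\abs W$-th arrival of the $W$-process occurs before any arrival of any singleton process, i.e., that the first $\abs W$ events in the combined stream are all of type $W$. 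By the standard fact that in a superposition of independent Poisson processes each successive event is independently of each type with probability proportional to its rate, this probability equals $(\lambda_W/\lambda_V)^{\abs W}$.

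Combining the two steps yields the claimed identity. The main obstacle I anticipate is making the ``first $\abs W$ events are of type $W$'' step rigorous; this is standard for Poisson ensembles but deserves an explicit appeal to the thinning/superposition property (or equivalently to the memoryless property applied inductively $\abs W$ times), rather than being asserted. Once that is in place, the rest is bookkeeping among the definitions of $\phi_W$, $\omega_F$, and $\psi_F$.
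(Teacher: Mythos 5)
Your proposal is correct and follows essentially the same route as the paper: the same family of flags with first block $W$ and singleton tails, the same observation that $\omega_{F_\sigma}=\rho_0^{-\abs W}\phi_W$ is independent of the ordering $\sigma$, and the same Poisson-superposition argument showing $\sum_\sigma p_{F_\sigma}=(\rho_0/\rho)^{\abs W}$. The only cosmetic difference is your use of $\lambda_W$ for the sum $\rho_0=\sum_{i\in W}\lambda_i$, which clashes slightly with the paper's use of $\lambda_W$ for the product $\prod_{i\in W}\lambda_i$ in the unisolvence lemmas.
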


\begin{proof}
  Let $W$ be a subset of $V$ of size $k+1$. We aim to show that $\phi_W$ can be expressed as a sum of blow-up Whitney forms. To that end, consider the set of flags $F$ for which $V_0=W$, and $V_1,\dotsc,V_{n-k}$ each have one element; there are $(n-k)!$ such flags. We claim that $\phi_W$ is the sum of $\psi_F$ over all of these flags.

  To that end, in light of Proposition~\ref{prop:psiexplicit}, observe that, for $1\le j\le n-k$, since $V_j$ is a singleton set, we have $\rho_j^{-\abs{V_j}}\phi_{V_j}=\lambda_{i_j}^{-1}\lambda_{i_j}=1$, where $i_j$ denotes the sole element of $V_j$. As such, for each flag, we have $\psi_F=p_F\rho_0^{-\abs W}\phi_W$. So then, it remains to show that the $p_F$ sum to $\rho_0^{\abs W}$ (with the normalization $\rho=\sum_{i\in V}\lambda_i=1$; otherwise $\left(\rho_0/\rho\right)^{\abs W}$).

  Since these flags all have the same unordered partition, we are working with an ensemble of Poisson processes, where we wait for the $\abs{W}$-th particle of a Poisson process with rate $\rho_0$, and we wait for the first particle of $n-k$ Poisson processes each with rate $\lambda_i$ for $i\notin W$. For a particular flag $F$ in the above set, we want the $\abs{W}$-th particle of Poisson process $0$ to arrive first, and then we want the first particles of the other Poisson processes to arrive in a particular order. However, since we sum over all of these orders, we conclude that the sum of the $p_F$ is simply the probability that the $\abs{W}$-th particle of Poisson process $0$ arrives first, that is, before we receive any particles from any of the other $n-k$ Poisson processes.

  Letting $\rho=\sum_{i\in V}\lambda_i$, which we may, optionally, normalize to one, the probability that the first particle that we receive is from the Poisson process $0$ is $\frac{\rho_0}\rho$. After receiving it, we wait for the next particle, which likewise has probability $\frac{\rho_0}\rho$ of coming from Poisson process $0$. Continuing on, we find that the probability that the first $\abs W$ particles that we received all came from Poisson process $0$ is $\bigl(\frac{\rho_0}\rho\bigr)^{\abs W}$, as desired.
\end{proof}

Another result that follows easily from the Poisson process perspective is the relationship between the blow-up Whitney forms on simplices of different dimension. We recall that $\psi_F=p_F\omega_F$.

\begin{proposition}\label{prop:reducedim}
  Let $F=(V_0,\dotsc,V_{n-k})$ be a flag on $V$. Removing the last subset of the partition, we obtain a flag $F':=(V_0,\dotsc,V_{n-k-1})$ on the smaller set $V':=V\setminus V_{n-k}$. Then
  \begin{equation*}
    p_{F'}=\lim_{\rho_{n-k}\to0}p_F.
  \end{equation*}
\end{proposition}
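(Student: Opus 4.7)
My plan is to exploit the Poisson-process interpretation of $p_F$ established in the preceding proposition: $p_F$ equals the probability that for an ensemble of $n+1-k$ independent Poisson processes, the $j$th of rate $\rho_j$, the arrival times $t_0 \le t_1 \le \cdots \le t_{n-k}$ are nondecreasing, where $t_j$ is the $|V_j|$th arrival time of the $j$th process. Under this picture the limit $\rho_{n-k}\to 0$ makes the last process infinitely slow, so $t_{n-k}\to\infty$ and the constraint $t_{n-k-1}\le t_{n-k}$ becomes vacuous. What remains is precisely the event $t_0\le\cdots\le t_{n-k-1}$ for the first $n-k$ processes, whose probability is $p_{F'}$ by the same Poisson interpretation applied to the flag $F'$ on $V'$ (which has exactly the partition sizes $|V_0|,\dotsc,|V_{n-k-1}|$ and rates $\rho_0,\dotsc,\rho_{n-k-1}$).

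To make this rigorous I would couple the ensembles across $\rho_{n-k}$: keep the first $n-k$ Poisson processes (and thus $t_0,\dotsc,t_{n-k-1}$) fixed as $\rho_{n-k}$ varies, and realize $t_{n-k}=s_{n-k}/\rho_{n-k}$, where $s_{n-k}$ is the $|V_{n-k}|$th arrival time of an independent rate-$1$ Poisson process. Then almost surely $t_{n-k}\to\infty$, so the indicator $\mathbf{1}\{t_0\le\cdots\le t_{n-k}\}$ converges pointwise a.s.\ to $\mathbf{1}\{t_0\le\cdots\le t_{n-k-1}\}$, and dominated convergence yields
\begin{equation*}
    \lim_{\rho_{n-k}\to 0} p_F \;=\; \Pr(t_0\le\cdots\le t_{n-k-1}) \;=\; p_{F'}.
\end{equation*}

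I do not anticipate any serious obstacle: once the Poisson interpretation is in hand the argument is essentially a one-line observation, with the only care needed in setting up the coupling so that the limit can be exchanged with the probability. A more computational alternative would be to work directly from Definition~\ref{def:rs}: as $\rho_{n-k}\to 0$ the inequality $r_{n-k-1}/\rho_{n-k-1}\le r_{n-k}/\rho_{n-k}$ becomes vacuous, while the remaining inequalities are precisely those cutting out $R'_{\brho'}$ in $R'=R_{F'}$, and a Fubini integration in the $r_{n-k}$ direction converts the relative volume in $R$ into the relative volume in $R'$. But the probabilistic route is cleaner and is what I would present.
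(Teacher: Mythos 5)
Your proposal is correct and follows essentially the same route as the paper: both use the Poisson-process interpretation of $p_F$, observe that as $\rho_{n-k}\to0$ the arrival time $t_{n-k}$ tends to infinity so the constraint $t_{n-k-1}\le t_{n-k}$ becomes vacuous, and conclude that the probability converges to $p_{F'}$. Your coupling and dominated-convergence step just makes explicit the limit interchange that the paper handles with the informal remark that $\Pr(t_{n-k}\ge T)\to1$ for every $T$.
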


\begin{proof}
  Recall that $p_F$ is the probability that $t_0\le\dotsb\le t_{n-k}$, where $t_j$ is the arrival time of the $\abs{V_j}$th particle of a Poisson process with rate $\rho_j$. So, as the rate $\rho_{n-k}$ goes to zero, the arrival time $t_{n-k}$ goes to infinity. More precisely, for any $T$, the probability that $t_{n-k}\ge T$ goes to $1$. So then the probability that $t_0\le\dotsb\le t_{n-k}$ approaches the probability that $t_0\le\dotsb\le t_{n-k-1}$, which is precisely $p_{F'}$.
\end{proof}

The Poisson framework also yields integral formulas similar to those that appear in \cite[p.~1028]{brasselet1991simplicial} and \cite[Lemma 1]{bendiffalah1995shadow}. The main idea is that $t_0\le\dotsb\le t_{n-k}$ is equivalent to $\frac{t_{j-1}}{t_j}\le1$ for $1\le j\le n-k$, so we can express the probability $p_F$ as an integral over the $t_j$ coordinates.

\begin{proposition}
  We have the following integral formulas for $p_F$.
  \begin{align*}
    p_F&=\int_0^\infty\int_0^1\dotsi\int_0^1\frac{\br^F}{F!}e^{-r}\tfrac1{t_0}\d(\tfrac{t_0}{t_1})\wedge\dotsb\wedge\d(\tfrac{t_{n-k-1}}{t_{n-k}})\wedge\d t_{n-k}\\
       &=\int_0^\infty\frac{r^ne^{-r}}{n!}\int_0^{\frac{\rho_{n-k-1}}{\rho_{n-k}}}\dotsi\int_0^{\frac{\rho_0}{\rho_1}}\frac{n!}{F!}\left(\frac{\br}{r}\right)^F\frac{r_{n-k}}{r_0}\d(\tfrac{r_0}{r_1})\wedge\dotsb\wedge\d(\tfrac{r_{n-k-1}}{r_{n-k}})\wedge\d r\\
       &=\int_{-\infty}^\infty\frac{r^{n+1}e^{-r}}{n!}\int_{-\infty}^{\ln\frac{\rho_{n-k-1}}{\rho_{n-k}}}\dotsi\int_{-\infty}^{\ln\frac{\rho_0}{\rho_1}}\frac{n!}{F!}\left(\frac{\br}{r}\right)^F\d\ln\tfrac{r_0}{r_1}\wedge\dotsb\wedge\d\ln\tfrac{r_{n-k-1}}{r_{n-k}}\wedge\d\ln r.
  \end{align*}
  Here, as before, $n_j=\abs{V_j}-1$, $r_j=\rho_jt_j$, $r=\sum r_j$, and $\br^F$ and $F!$ are shorthand for $\prod r_j^{\abs{V_j}}$ and $\prod n_j!$, respectively.
  
  Note, in the first line, that the bounds of integration are constants and the integrand varies with $\brho$, whereas in the second and third lines the bounds depend on $\brho$ but the integrand does not. In all cases, the bounds do not depend on the integration variables, so we are integrating over an (infinite) rectangle.

  Note also that, in the second and third lines, the inner integrand is dilation-invariant, and the outer integral integrates to one, so, similarly to Proposition~\ref{prop:RequivT}, we can reduce to an integral over the subset $R_\brho$ of the simplex defined by $r=1$.
\end{proposition}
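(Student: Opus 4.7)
The plan is to start from the probabilistic identity $p_F = \Pr(t_0\le\cdots\le t_{n-k})$ established above, express it as an explicit integral using the Erlang density of each $t_j$, and then perform two successive changes of variables to obtain the three stated formulas.

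Since $t_j$ is the $\abs{V_j}$th arrival time of a rate-$\rho_j$ Poisson process, its density is $\frac{\rho_j^{\abs{V_j}}}{n_j!}\, t_j^{n_j} e^{-\rho_j t_j}\,\d t_j$. Setting $r_j = \rho_j t_j$ and rewriting $\rho_j^{\abs{V_j}} t_j^{n_j}\,\d t_j = r_j^{\abs{V_j}}\,\d t_j/t_j$, the joint density factors as $\frac{\br^F}{F!} e^{-r}\bigwedge_j \d t_j/t_j$. The event $\{t_0\le\cdots\le t_{n-k}\}$ is equivalent to $t_{j-1}/t_j\le 1$ for each $j\ge 1$, so we change coordinates to $(u_0,\dots,u_{n-k-1},t_{n-k})$ with $u_j := t_j/t_{j+1}$. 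The associated Jacobian is upper bidiagonal with diagonal entries $1/t_1,\dots,1/t_{n-k},1$, yielding
\[
\bigwedge_j \frac{\d t_j}{t_j} = \frac{1}{t_0}\,\d u_0 \wedge \cdots \wedge \d u_{n-k-1}\wedge \d t_{n-k},
\]
and the integration region becomes the rectangle $u_j\in[0,1]$, $t_{n-k}\in[0,\infty)$. This is the first formula.

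For the second formula, we substitute $v_j := r_j/r_{j+1} = (\rho_j/\rho_{j+1})\,u_j$ (so $u_j\le 1$ becomes $v_j\le\rho_j/\rho_{j+1}$), and we replace $t_{n-k}$ by $r = \sum_j r_j$. The crucial identity is that, writing $r_{n-k} = r/g(v_0,\dots,v_{n-k-1})$ with $g = 1 + v_{n-k-1} + v_{n-k-2}v_{n-k-1} + \cdots + v_0v_1\cdots v_{n-k-1}$, only the $\d r/g$ component of $\d r_{n-k}$ survives wedging with $\d v_0\wedge\cdots\wedge \d v_{n-k-1}$, giving
\[
\d v_0\wedge\cdots\wedge \d v_{n-k-1}\wedge \d r_{n-k} = \frac{r_{n-k}}{r}\,\d v_0\wedge\cdots\wedge \d v_{n-k-1}\wedge \d r.
\]
The telescoping product $\prod_{j<n-k}(\rho_{j+1}/\rho_j)\cdot(1/\rho_{n-k}) = 1/\rho_0$ combines with $1/t_0 = \rho_0/r_0$ to produce a net factor $r_{n-k}/(r\cdot r_0)$, and the homogeneity identity $\br^F = r^{n+1}(\br/r)^F$ extracts the Gamma density $r^n e^{-r}/n!$, giving the second formula.

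The third formula follows by replacing each one-form with its logarithmic counterpart, $\d v_j = v_j\,\d\ln v_j$ and $\d r = r\,\d\ln r$. The telescoping product $\prod_{j=0}^{n-k-1} v_j = r_0/r_{n-k}$ exactly cancels the $r_{n-k}/r_0$ factor, while the extra $r$ coming from $\d r$ upgrades $r^n/n!$ to $r^{n+1}/n!$; the bounds $v_j\in(0,\rho_j/\rho_{j+1})$ and $r\in(0,\infty)$ become the logarithmic intervals in the statement. The main technical step is the Jacobian identity for the second formula---the replacement of $\d r_{n-k}$ by $\d r$---after which the rest reduces to bookkeeping with the two telescoping products $\prod(\rho_{j+1}/\rho_j)$ and $\prod(r_j/r_{j+1})$.
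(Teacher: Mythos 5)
Your proposal is correct and follows essentially the same route as the paper's proof: both start from the Erlang densities of the $t_j$, pass to the ratio coordinates via the telescoping wedge identity to get the first formula, and then use the relation $r=r_{n-k}\,g(v)$ together with the observation that only the $\d r$ component survives the wedge to replace $\d r_{n-k}$ by $\tfrac{r_{n-k}}{r}\d r$. The remaining bookkeeping with the telescoping products matches the paper's computation.
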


\begin{proof}
  Recall for Poisson arrival times that the $t_j$ are independent with probability distributions
  \begin{equation*}
    \rho_j^{\abs{V_j}}\frac{t_j^{n_j}}{n_j!}e^{-\rho_jt_j}\d t_j=\frac{r_j^{\abs{V_j}}}{n_j!}e^{-r_j}\d\ln t_j.
  \end{equation*}
  Thus, the full probability distribution is
  \begin{equation*}
    \frac{\br^F}{F!}e^{-r}\d\ln t_0\wedge\dotsb\wedge\d\ln t_{n-k}.
  \end{equation*}
  Next, by wedging right to left, we can verify that
  \begin{multline*}
    (\d\ln t_0-\d\ln t_1)\wedge\dotsb\wedge(\d\ln t_{n-k-1}-\d\ln t_{n-k})\wedge\d\ln t_{n-k}\\
    =\d\ln t_0\wedge\dotsb\wedge\d\ln t_{n-k}.
  \end{multline*}
  Since $t_0\le\dotsb\le t_{n-k}$ is equivalent to $\ln\frac{t_{j-1}}{t_j}\le0$, we conclude that
  \begin{equation*}
    p_F=\int_{-\infty}^{\infty}\int_{-\infty}^0\dotsi\int_{-\infty}^0\frac{\br^F}{F!}e^{-r}\d\ln\tfrac{t_0}{t_1}\wedge\dotsb\wedge\d\ln\tfrac{t_{n-k-1}}{t_{n-k}}\wedge\d\ln t_{n-k}.
  \end{equation*}
  Changing variables, we can also write
  \begin{equation*}
    p_F=\int_0^\infty\int_0^1\dotsi\int_0^1\frac{\br^F}{F!}e^{-r}\tfrac1{t_0}\d(\tfrac{t_0}{t_1})\wedge\dotsb\wedge\d(\tfrac{t_{n-k-1}}{t_{n-k}})\wedge\d t_{n-k},
  \end{equation*}
  which is the first equation in the proposition. Changing variables again, we can write
  \begin{equation*}
    p_F=\int_0^\infty\int_0^{\frac{\rho_{n-k-1}}{\rho_{n-k}}}\dotsi\int_0^{\frac{\rho_0}{\rho_1}}\frac{\br^F}{F!}e^{-r}\tfrac1{r_0}\d(\tfrac{r_0}{r_1})\wedge\dotsb\wedge\d(\tfrac{r_{n-k-1}}{r_{n-k}})\wedge\d r_{n-k}.
  \end{equation*}
  Observe that
  \begin{equation*}
    r=r_{n-k}(1+\tfrac{r_{n-k-1}}{r_{n-k}}+\tfrac{r_{n-k-1}}{r_{n-k}}\tfrac{r_{n-k-2}}{r_{n-k-1}}+\dotsb+\tfrac{r_{n-k-1}}{r_{n-k}}\dotsm\tfrac{r_0}{r_1}).
  \end{equation*}
  We thus have that
  \begin{equation*}
    \d(\tfrac{r_0}{r_1})\wedge\dotsb\wedge\d(\tfrac{r_{n-k-1}}{r_{n-k}})\wedge\d(\tfrac{r_{n-k}}r)=0,
  \end{equation*}
  because it is an $(n-k+1)$-form in the $n-k$ variables $\frac{r_{j-1}}{r_j}$. We can conclude that
  \begin{equation*}
    \d(\tfrac{r_0}{r_1})\wedge\dotsb\wedge\d(\tfrac{r_{n-k-1}}{r_{n-k}})\wedge\d r_{n-k}=\tfrac{r_{n-k}}r\d(\tfrac{r_0}{r_1})\wedge\dotsb\wedge\d(\tfrac{r_{n-k-1}}{r_{n-k}})\wedge\d r,
  \end{equation*}
  and so
  \begin{equation*}
    p_F=\int_0^\infty\int_0^{\frac{\rho_{n-k-1}}{\rho_{n-k}}}\dotsi\int_0^{\frac{\rho_0}{\rho_1}}\frac{\br^F}{F!}\frac{e^{-r}}r\frac{r_{n-k}}{r_0}\d(\tfrac{r_0}{r_1})\wedge\dotsb\wedge\d(\tfrac{r_{n-k-1}}{r_{n-k}})\wedge\d r,
  \end{equation*}
  which yields the second and third equations in the proposition.
\end{proof}

\section{The complex of blow-up Whitney forms and its cohomology} \label{sec:complex}

\subsection{The exterior derivative}
In this subsection, we show that the blow-up Whitney forms form a complex with respect to the exterior derivative. 

\begin{theorem}\label{thm:d}
  \begin{equation*}
    d\psi_F=\pm\sum_{j=1}^{n-k}\psi_{F_j},
  \end{equation*}
  where $F_j$ is the flag constructed from $F$ by replacing the two partition elements $V_{j-1}$ and $V_j$ with their union.
\end{theorem}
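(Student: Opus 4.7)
The plan is to leverage the factorization $\psi_F = p_F\,\omega_F$ from Proposition~\ref{prop:psiexplicit} together with Corollary~\ref{cor:omegavol}, which identifies $\omega_F$ with the pullback of the normalized volume form on $\Theta_F$ under the quasi-cylindrical projection. Because that volume form is top-degree on $\Theta_F$ it is closed, and so $d\omega_F = 0$. Consequently $d\psi_F = dp_F \wedge \omega_F$, and the theorem reduces to computing $dp_F$ and matching $dp_F \wedge \omega_F$ to $\pm \sum_{j=1}^{n-k} p_{F_j}\,\omega_{F_j}$.

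To compute $dp_F$, I would use the representation $p_F(\brho) = \int_{R_\brho \times \Theta} \omega$ coming from the proof of Proposition~\ref{prop:psiexplicit}, and apply the Leibniz rule for differentiating an integral over a moving domain. The only boundaries of $R_\brho$ that actually move with $\brho$ are the facets of type~(b), where one of the defining inequalities of Definition~\ref{def:rs} saturates, i.e., $r_{j-1}/\rho_{j-1} = r_j/\rho_j$ for some $1 \le j \le n-k$. The remaining facets of type~(a), where some $r_j = 0$, lie on static walls of the simplex $R$ and so contribute nothing to $dp_F$ by the Leibniz principle: the outward velocity of a static wall is zero.

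The moving facet $\{r_{j-1}/\rho_{j-1} = r_j/\rho_j\}$ has a clean interpretation via the Poisson-process framework of Section~\ref{sec:poisson}: saturation there forces the arrival times $t_{j-1}$ and $t_j$ to coincide, which is equivalent to replacing the classes $V_{j-1}$ and $V_j$ by their union $V_{j-1} \cup V_j$ and running a single rate-$(\rho_{j-1}+\rho_j)$ Poisson process for the merged class. After parametrizing the facet by the common ratio $s$ and the combined radial coordinate $\rho_{j-1}+\rho_j$, the boundary integral should therefore evaluate to $p_{F_j}$ multiplied by a one-form in $\brho$ proportional to $\tfrac{d\rho_{j-1}}{\rho_{j-1}} - \tfrac{d\rho_j}{\rho_j}$, the logarithmic differential of the defining function of the facet.

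The concluding step is a differential-form identity of the shape
$$\Bigl(\tfrac{d\rho_{j-1}}{\rho_{j-1}} - \tfrac{d\rho_j}{\rho_j}\Bigr)\wedge \omega_F \;=\; \pm\,(\text{density factor})\cdot \omega_{F_j},$$
which, after clearing the factors of $\rho_l^{|V_l|}$ that $F$ and $F_j$ share, reduces to a comparison of $\phi_{V_{j-1}}\wedge\phi_{V_j}$ with $\phi_{V_{j-1}\cup V_j}$ on the subsimplex $T_{V_{j-1}\cup V_j}$ via its own quasi-cylindrical decomposition with inner flag $(V_{j-1},V_j)$. The main obstacle I anticipate is this final bookkeeping step: the density factor produced by the boundary parametrization, the orientation sign of the moving facet, and the wedge-product identity must all conspire to reproduce $\pm \psi_{F_j}$ exactly. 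The Poisson perspective developed in Section~\ref{sec:poisson} is the essential tool for keeping the combinatorial factors straight and ensuring the boundary density collapses cleanly into $p_{F_j}$.
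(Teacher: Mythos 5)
Your strategy is sound and, if completed, would prove the theorem, but it is a genuinely different route from the paper's. The paper never differentiates $p_F$ at all: it proves the identity \emph{weakly}, pairing $\psi_F$ against test forms $\alpha$ on $\bR_{>0}^{V,\blambda}$ and lifting to the product space $\bR_{\ge0}^{V,\bl}\times\bR_{>0}^{V,\blambda}$, where Proposition~\ref{prop:weakpsi} identifies $\int\psi_F\wedge\alpha$ with the integral of the closed form $e^{-r}\,\d l_V\wedge\alpha$ over the arrival-time set $A_F$. Stokes's theorem on $A_F$ then does all the work, because the boundary components of $A_F$ (where an inequality $t_{j-1}\le t_j$ saturates) are \emph{literally} the sets $A_{F_j}$, so the same Proposition~\ref{prop:weakpsi}, applied to $F_j$, converts each boundary term into $\int\psi_{F_j}\wedge\alpha$ with no flux or density computation whatsoever. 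Your strong-form Leibniz argument projects down to the $\blambda$ variables first and therefore must pay for that convenience later: it has to evaluate the boundary flux explicitly and then verify the wedge identity $\bigl(\tfrac{\d\rho_{j-1}}{\rho_{j-1}}-\tfrac{\d\rho_j}{\rho_j}\bigr)\wedge\omega_F=c_j\,\omega_{F_j}$ (which does follow from Lemma~\ref{lem:intwhitney} applied to $V_{j-1}$, $V_j$, and their union). What your approach buys is a pointwise, self-contained formula for $\d\psi_F$ with explicit constants; what the paper's approach buys is the complete avoidance of exactly the bookkeeping you flag as your main obstacle.

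One caution about the step you describe as following from the Poisson intuition: the restriction of the radial density $\prod_l r_l^{n_l}$ to the facet $\{r_{j-1}/\rho_{j-1}=r_j/\rho_j\}$ is \emph{not} proportional to the merged-flag density --- it is off by a factor of $u=r_{j-1}+r_j$ (and combinatorial constants), since conditioning on the coincidence $t_{j-1}=t_j$ does not reproduce the law of the $(\abs{V_{j-1}}+\abs{V_j})$th arrival of a single rate-$(\rho_{j-1}+\rho_j)$ process. The flux nevertheless collapses to $p_{F_j}$ times a one-form because the normal velocity of the facet, $r_{j-1}\,\d\rho_j-r_j\,\d\rho_{j-1}=\tfrac{u\,\rho_{j-1}\rho_j}{\rho_{j-1}+\rho_j}\bigl(\tfrac{\d\rho_j}{\rho_j}-\tfrac{\d\rho_{j-1}}{\rho_{j-1}}\bigr)$, supplies exactly the compensating factor of $u$. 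This cancellation is the substantive content of your ``boundary density collapses cleanly into $p_{F_j}$'' claim, and it needs to be carried out rather than inferred from the equivalence of the two Poisson descriptions.
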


This result appears in~\cite[Corollary 2.2]{brasselet1991simplicial}. Here, we give a perspective on the proof from the vantage point of Poisson processes. We want to emphasize that, although the Poisson process perspective is new, the proof follows roughly the same key ideas as in \cite{brasselet1991simplicial}, except that we work on the orthant squared $\bR^V_{\ge0}\times\bR^V_{>0}$ whereas they work on the simplex squared $\mr T\times T$. In particular, the arrival time subset $A_F$ in Definition~\ref{def:af} corresponds to the incidence variety $D_\sigma$ in \cite{brasselet1991simplicial} (with $\sigma$ corresponding to $F$), and Proposition~\ref{prop:weakpsi} corresponds to \cite[Proposition 2.1]{brasselet1991simplicial}.

We first establish and recall some notation. First, note that we have actually been using two copies of $\mathbb R_{\ge0}^V$. One of them, with coordinates $l_i$, we used in the section on Poisson processes, and there we treated the $\lambda_i$ as parameters giving the rates of the Poisson processes. But, of course, the $\lambda_i$ are also coordinates of a copy of $\mathbb R_{\ge0}^V$, and the blow-up Whitney forms are written in terms of the $\lambda_i$. As before, we let $\phi_{V_j}$ be Whitney forms in terms of the $\lambda_i$, and we let $\omega_{V_j}=\frac{\phi_{V_j}}{\rho_j^{\abs{V_j}}}$, and we have $\omega_F=\bigwedge_j\omega_{V_j}$. Recall that the blow-up Whitney forms are $\psi_F=p_F\omega_F$, where, as before, $p_F$ is the probability of a particular order of certain arrival times of Poisson processes. Note that the expression $\omega_{V_j}$ is defined on all of $\bR_{>0}^V$ and is dilation-invariant (via pullback). Therefore, instead of taking the perspective that $\psi_F$ is a form on $T$, we take the perspective that it is a dilation-invariant form on $\bR_{>0}^V$.

With that all in mind, we will now work on the space $\bR_{\ge0}^V\times\bR_{>0}^V$, where the first copy has coordinates $l_i$, and the second copy has coordinates $\lambda_i$. When we need to disambiguate, we will write $\bR_{\ge0}^{V,\bl}$, and $\bR_{>0}^{V,\blambda}$. As before, $r_j=\sum_{i\in V_j}l_i$, and $\rho_j=\sum_{i\in V_j}\lambda_i$, and likewise $r$ and $\rho$ are those sums over all of $V$. As before, we will let $t_j=\frac{r_j}{\rho_j}$ for $j\in J$, but now we view $t_j$ as a scalar-valued function on $\bR_{\ge0}^V\times\bR_{>0}^V$. We will also let $s_i=\frac{l_i}{\lambda_i}$ for $i\in V$; this value can likewise be interpreted as an arrival time, but we use a different letter to avoid confusion. Note that the denominator in these expressions is the reason we restrict the $\blambda$ space to the strictly positive orthant.

In this setting, we have a simple weak characterization of the blow-up Whitney forms $\psi_F$. We begin with a preliminary definition.

\begin{definition}\label{def:af}
  For a flag $F$, let $A_F$ be the \emph{arrival time subset} of $\bR_{\ge0}^V\times\bR_{>0}^V$, defined by the inequalities $s_i\le s_{i'}$ for $i\in V_j$, $i'\in V_{j'}$, whenever $j\le j'$. Note in particular that $s_i=s_{i'}$ if $i$ and $i'$ are in the same subset $V_j=V_{j'}$. Moreover, this common value is $t_j$, so we have $t_0\le\dotsb\le t_{n-k}$ as before.
\end{definition}

Observe that, due to the presence of these equalities, the space $A_F$ is $(2(n+1)-k)$-dimensional.

\begin{proposition}\label{prop:weakpsi}
  Let $F$ be a flag and let $\alpha$ be a suitable (for instance, smooth and compactly supported) test $(n+1-k)$-form on $\bR_{>0}^{V,\blambda}$, which we can view as a form on $\bR_{\ge0}^V\times\bR_{>0}^V$ that only depends on $\blambda$. Then
  \begin{equation*}
    \int_{A_F}e^{-r}\d l_V\wedge\alpha=\int_{\bR_{>0}^{V,\blambda}}\psi_F\wedge\alpha,
  \end{equation*}
  presuming a suitable choice of orientation of $A_F$ (without which the equation may need a negative sign).
\end{proposition}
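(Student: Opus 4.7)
The plan is to parametrize $A_F$ by $(\blambda, t_0, \ldots, t_{n-k})$ subject to $0 \le t_0 \le \cdots \le t_{n-k}$, via $l_i = \lambda_i t_{j(i)}$ for $i \in V_j$, and then apply Fubini: first integrate out the $t_j$'s to obtain an $(n+1)$-form on $\bR_{>0}^{V,\blambda}$, then pair with $\alpha$. So it suffices to show the fiber integral of $e^{-r} \d l_V$ equals $\psi_F$.

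On $A_F$, one has $\d l_i = t_j\, \d\lambda_i + \lambda_i\, \d t_j$ for $i \in V_j$. Expanding the inner wedge and discarding terms containing $\d t_j \wedge \d t_j$ yields
\[
\bigwedge_{i \in V_j} \d l_i = t_j^{|V_j|}\, \d\lambda_{V_j} \;\pm\; t_j^{n_j}\, \frac{\phi_{V_j}}{n_j!} \wedge \d t_j,
\]
where the Whitney form appears because $\sum_{i_0 \in V_j} \pm\, \lambda_{i_0}\, \d\lambda_{V_j \setminus \{i_0\}} = i_{X_{\id,j}} \d\lambda_{V_j} = \phi_{V_j}/n_j!$ by Definition~\ref{def:whitney}. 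Since $\alpha$ contains no $\d t_j$'s, the only contribution to a top form on $A_F$ in $e^{-r} \d l_V \wedge \alpha$ comes from selecting the second summand for every $j$; all other choices leave too many $\d\lambda$ factors to wedge against $\alpha$. Using also $e^{-r} = \prod_j e^{-\rho_j t_j}$ on $A_F$, we obtain
\[
e^{-r}\, \d l_V \wedge \alpha \,\big|_{A_F} = \pm \left(\prod_j t_j^{n_j} e^{-\rho_j t_j}\right) \frac{\phi_F}{F!} \wedge \d t_0 \wedge \cdots \wedge \d t_{n-k} \wedge \alpha.
\]

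The iterated $t$-integral over $\{0 \le t_0 \le \cdots \le t_{n-k}\}$ now evaluates to $F!\, \brho^{-F}\, p_F$: the integrand $\prod_j t_j^{n_j} e^{-\rho_j t_j}$ differs from the joint density $\prod_j \rho_j^{|V_j|} \tfrac{t_j^{n_j}}{n_j!} e^{-\rho_j t_j}$ of independent Poisson arrival times (with $t_j$ the $|V_j|$-th arrival of a rate-$\rho_j$ process) by exactly $F!\, \brho^{-F}$, and $p_F$ is by definition the probability of the ordering $t_0 \le \cdots \le t_{n-k}$. Combining with $\omega_F = \brho^{-F} \phi_F$ yields fiber integral $\pm p_F\, \omega_F = \pm \psi_F$, proving the identity up to a sign. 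The main obstacle is the combinatorial bookkeeping in the expansion of $\d l_V$: the contraction characterization of $\phi_{V_j}$ from Definition~\ref{def:whitney} is what lets the wedge over $j$ of the surviving Case-2 pieces reassemble cleanly into $\phi_F/F!$, and the remaining overall sign is then absorbed by choosing the (previously unspecified) orientation of $A_F$ compatibly with the parametrization.
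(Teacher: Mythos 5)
Your proof is correct and follows essentially the same route as the paper's: parametrize $A_F$ by $\blambda$ together with the arrival times, rewrite $e^{-r}\,\d l_V$ in those coordinates so that the fiber over each $\blambda$ carries the joint Poisson arrival-time density, and apply Fubini to identify the fiber integral with $p_F\omega_F=\psi_F$. The only cosmetic difference is that you work in the $t_j$ variables with a direct wedge expansion and degree count against $\alpha$, whereas the paper works in the $r_j$ variables using the contraction identity $i_{X_{\id}}\d l_i=s_i\,i_{X_{\id}}\d\lambda_i$ and Lemma~\ref{lem:intwhitney}; the two computations agree term by term.
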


\begin{proof}
  We first observe that the $n+1$ coordinates $\lambda_i$, along with the $n+1-k$ functions $r_j$, form a coordinate system on the the $(2(n+1)-k)$-dimensional space $A_F$. Indeed, given the $\lambda_i$ and the $r_j$, we can compute the $\rho_j$ and hence the $t_j=\frac{r_j}{\rho_j}$. Given that we are on $A_F$, the $s_i$ are equal to the $t_j$ for $i\in V_j$, and so we can recover the $l_i=\lambda_i s_i$. In fact, this yields a coordinate system on the larger set where the $s_i$ are equal to the $t_j$ for $i\in V_j$, and restricting this coordinate system to $A_F$ amounts to restricting $\br$ to $R_\brho$ for each value of $\blambda$, because we recall that $\br\in R_\brho$ is equivalent to $t_0\le\dotsb\le t_{n-k}$.

  Our next task is to convert the $(n+1)$-form $e^{-r}\d l_V$ to this coordinate system. We will let $X_{\id}$ be the tautological vector field on the whole space (both the $l_i$ and $\lambda_i$ coordinates), and we observe that $i_{X_{\id}}\d l_i=s_i i_{X_{\id}}\d\lambda_i$.

  We now focus on a particular value of $j$, recalling that all of the $s_i$ for $i\in V_j$ are equal to a single value, which we denote by $t_j$. We compute as in Lemma~\ref{lem:intwhitney} and using the above fact that $i_{X_{\id}}\d l_i=s_i i_{X_{\id}}\d\lambda_i$. We obtain
  \begin{multline*}
    e^{-r_j}\d l_{V_j}=e^{-r_j}\frac{dr_j}{r_j}\wedge i_{X_{\id}}(\d l_{V_j})
    =e^{-r_j}\frac{dr_j}{r_j}\wedge t_j^{\abs{V_j}}i_{X_{\id}}\d\lambda_{V_j}\\
    =e^{-r_j}\frac{r_j^{n_j}}{n_j!}\d r_j\wedge\rho_j^{-\abs{V_j}}\phi_{V_j},
  \end{multline*}
  where, as before $n_j=\abs{V_j}-1$. We recognize the first factor as the probability distribution of the arrival time $r_j$ of the $\abs{V_j}$th particle under a Poisson process with rate $1$, and we recognize the second factor as $\omega_j$, the dilation-invariant Whitney form on $T_{V_j}$. So then, wedging everything together over $j$, we have
  \begin{equation*}
    e^{-r}\d l_V=\pm e^{-r}\frac{\br^{F}}{F!}\d r_J\wedge\omega_F,
  \end{equation*}
  where, as before, $\br^F$, $F!$, $\d r_J$, and $\omega_F$ are shorthands for the respective products.

  So now we integrate by Fubini's theorem. Recall that, for fixed $\blambda$, because the $r_j$ are distributed as $\abs{V_j}$th arrival times of Poisson processes with rate $1$, the $t_j=\frac{r_j}{\rho_j}$ are distributed as $\abs{V_j}$th arrival times of Poisson processes with rate $\rho_j$. Also, recall that, for fixed $\blambda$, choosing $\br$ to land us in $A_F$ is equivalent to $\br\in R_\brho$ (viewing $R_\brho$ as a dilation-invariant subset of $\bR_{>0}^J$), which we know occurs with probability $p_F$ under this probability distribution. Hence, assuming a proper choice of orientation of $A_F$ and recalling that $\alpha$ only depends on $\blambda$, we have
  \begin{multline*}
    \int_{A_F}e^{-r}\d l_V\wedge\alpha=\int_{\bR_{>0}^{V,\blambda}}\left(\int_{R_\brho}e^{-r}\frac{\br^{F}}{F!}\d r_J\right)\omega_F\wedge\alpha\\
    =\int_{\bR_{>0}^{V,\blambda}}p_F\omega_F\wedge\alpha=\int_{\bR_{>0}^{V,\blambda}}\psi_F\wedge\alpha,
  \end{multline*}
  as desired.
\end{proof}

\begin{proof}[Proof of Theorem~\ref{thm:d}]
  We proceed by taking the weak exterior derivative and applying Stokes's theorem. We lower the degree of $\alpha$ by one, taking it to be a test $(n-k)$-form on $\bR_{>0}^{V,\blambda}$, and as before we require compact support away from the boundary of the orthant. So then, by the preceding proposition, we have
  \begin{multline*}
    \int_{\bR_{>0}^{V,\blambda}}\d\psi_F\wedge\alpha=\pm\int_{\bR_{>0}^{V,\blambda}}\psi_F\wedge\d\alpha=\pm\int_{A_F}e^{-r}\d l_V\wedge\d\alpha\\
    =\pm\int_{A_F}\d\left(e^{-r}\d l_V\wedge\alpha\right)=\pm\int_{\partial A_F}e^{-r}\d l_V\wedge\alpha.
  \end{multline*}
  Here we used the fact that $d(e^{-r}\d l_V)=0$ because $e^{-r}$ only depends on $\bl$, so we are effectively taking the derivative of a top-level form on $\bR_{\ge0}^{V,\bl}$.

  So all that remains is to relate the boundary of $A_F$ to the $A_{F_j}$. The set $A_F$ is defined by the inequalities
  \begin{equation*}
    t_0\le\dotsb\le t_{n-k},
  \end{equation*}
  along with the condition that for $i\in V_j$ we have $s_i$ all equal to $t_j$. So then the boundary of $A_F$ is comprised of components which are given by turning one of the defining inequalities $t_{j-1}\le t_j$ into an equality $t_{j-1}=t_j$ (while maintaining all other inequalities and equalities). We see that doing so precisely gives us $A_{F_j}$, the arrival time set of the flag $F_j$ obtained by replacing $V_{j-1}$ and $V_j$ with their union.

  Additionally, we must consider the boundary component $t_0=0$, but note that this implies that $r_0=0$, which means that some of the $l_i$ are zero, which means some of the $\d l_i$ are zero, which means that $e^{-r}\d l_V=0$. We must also consider the ``boundary component'' as $t_{n-k}\to\infty$, but note that this requires either $r_{n-k}\to\infty$, which means $e^{-r}\to0$, or it requires $\rho_{n-k}\to0$, in which case $\alpha$ vanishes due to the assumption of compactness away from the boundary of the orthant.

  In conclusion, we have
  \begin{equation*}
    \int_{\bR_{>0}^{V,\blambda}}\d\psi_F\wedge\alpha=\sum_{j=1}^{n-k}\pm\int_{A_{F_j}}e^{-r}\d l_V\wedge\alpha=\sum_{j=1}^{n-k}\pm\int_{\bR_{>0}^{V,\blambda}}\psi_{F_j}\wedge\alpha,
  \end{equation*}
  for all $\alpha$, as desired.
\end{proof}

\subsection{Blowing up and cohomology} \label{sec:cohomology}
Via integration, the standard Whitney forms on a simplex are dual to the faces of the simplex, which means that the complex of Whitney forms is isomorphic to the complex of simplicial cochains. Consequently, the cohomology of these complexes are the same. In particular, a single simplex is homeomorphic to a ball, so the cohomology vanishes except for the constants in degree zero.

The goal of this section is to similarly conclude that the cohomology of the blow-up Whitney form complex vanishes except for the constants in degree zero. A priori, however, the task seems foolish: the blow-up Whitney forms are only smooth on the interior of $T$, so evaluation on faces is not defined, unless we follow the specific limiting procedure discussed in the degrees of freedom section. Our solution to this problem is to \emph{blow up} the simplex, obtaining a \emph{blow-up space} $\tl T$. Combinatorially, the blow-up $\tl T$ is equivalent to a well-understood polytope called the permutahedron; see Figure~\ref{fig:blowuptet}. But its main advantage for us comes from analysis: The blow-up simplex comes with a map $\pi\colon\tl T\to T$ that is a diffeomorphism on the interior, so we can pullback the blow-up Whitney forms $\psi_F$ on $T$ to get forms $\tl\psi_F$ on $\tl T$. Notably, whereas the $\psi_F$ are only smooth on the interior of $T$, the blow-up simplex ``desingularizes'' the boundary of $T$, and the forms $\tl\psi_F$ end up being smooth on $\tl T$ up to and including the boundary. Moreover, integrating the $\tl\psi_F$ over the faces of $\tl T$ ends up being equivalent to the limiting procedure discussed in the degrees of freedom section.

\begin{figure}
  \centering
  \includegraphics[scale=0.4,clip=true,trim=0in 0in 0in 0in]{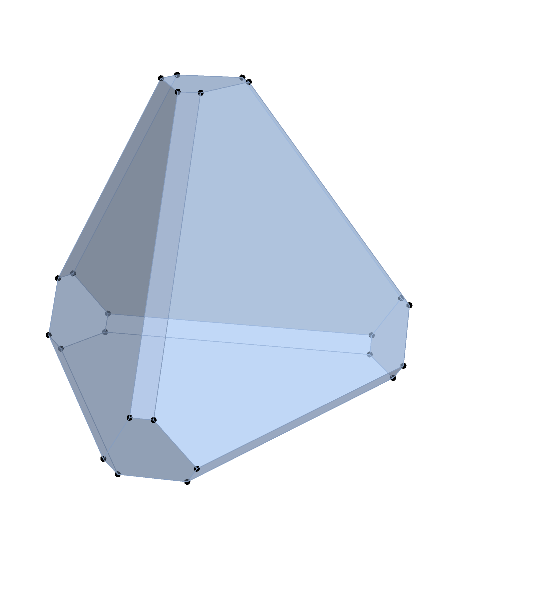}
  \caption{The blow-up of a tetrahedron is combinatorially equivalent to a permutahedron of order 4.  The 24 vertices of the permutahedron correspond to the 24 flags $0123, 0132, 0213, \dots,3210$ (where, for instance, $0123$ is shorthand for $(\{0\},\{1\},\{2\},\{3\})$).}
  \label{fig:blowuptet}
\end{figure}

In short, by lifting to the blow-up $\tl T$, the blow-up Whitney forms become smooth including on the boundary, and they are dual to the faces via integration. So, we get the same story for blow-up Whitney forms that we had for the regular Whitney forms, just with $\tl T$ in place of $T$ and cellular cohomology in place of simplicial cohomology.

We proceed with some intuition, followed by the definition, and then followed by examples.

\begin{intuition}[via barycentric subdivision]
  The blow-up simplex $\tl T$ can be thought of as the configuration space of barycentric subdivisions of $T$, where degenerate subdivisions are allowed. Normally, one performs barycentric subdivision by choosing a point $x$ in the interior of $T$, and then constructing rays, half-planes, etc., that join the faces of $T$ and $x$. But what if we allow degenerate subdivisions, where $x$ is on the boundary of $T$, for example, at a vertex? Then the barycentric subdivision is generally no longer determined by $x$. On the other hand, for each face $K$ of $T$, a barycentric subdivision of $T$ also induces a barycentric subdivision of $K$, and hence gives a point $x^K$ in $K$. The collection of points $x^K$ determines the barycentric subdivision even in the degenerate case, but there are relations between the $x^K$ that must be accounted for.
\end{intuition}

\begin{intuition}[via Poisson processes]
  As in Section~\ref{sec:poisson}, consider a collection of Poisson processes with rates $\lambda_i$. Then, for example, the probability that we receive particle $0$ before particle $2$ is $\frac{\lambda_0}{\lambda_0+\lambda_2}$, and likewise the probability that we receive particle $0$ first among particles $0$, $1$, and $2$, is $\frac{\lambda_0}{\lambda_0+\lambda_1+\lambda_2}$. But what if we allow some rates to be infinitesimal relative to others? For example, perhaps source $0$ emits a particle every second on average, whereas source $1$ emits three particles every million years on average, and source $2$ emits two particles every million years on average. Then, effectively,
  \begin{equation}\label{eq:degeneraterates}
    \frac{\lambda_0}{\lambda_0+\lambda_1+\lambda_2}=1,\qquad\frac{\lambda_1}{\lambda_0+\lambda_1+\lambda_2}=0,\qquad\frac{\lambda_2}{\lambda_0+\lambda_1+\lambda_2}=0.
  \end{equation}
  On the other hand, if we consider only particles $1$ and $2$, it is clear that
  \begin{equation*}
    \frac{\lambda_1}{\lambda_1+\lambda_2}=\frac35.
  \end{equation*}
  However, as written above, $\frac{\lambda_1}{\lambda_1+\lambda_2}$ does not make sense because Equation~\eqref{eq:degeneraterates} implies $\lambda_1=\lambda_2=0$. The solution is to treat all of these fractions as independent variables, and then use constraint equations to enforce the relationships between them.
\end{intuition}

\begin{definition}\label{def:blowupbary}
  Given a simplex $T$, let $K\le T$ denote a face of $T$, and let $V_K$ denote the vertices of $K$. Consider then the high-dimensional first orthant
  \begin{equation*}
    \prod_{K\le T}\bR_{\ge0}^{V_K},
  \end{equation*}
  Denote the coordinates of this space by $\lambda_i^K$, where $K\le T$ and $i\in V_K$.

  Let $\tilde T$ be the subset of this space carved out by the equations
  \begin{equation*}
    \sum_{i\in V_K}\lambda_i^K=1
  \end{equation*}
  for all $K$ and  
  \begin{equation*}
    \lambda_i^H=\rho_K^H\lambda_i^K
  \end{equation*}
  for all $K\le H$ and $i\in V_K$, where $\rho_K^H=\sum_{i\in V_K}\lambda_i^H$. Note that the value of $\rho_K^H$ also follows by summing the above equation over $i\in V_K$, and that the equations are vacuous when $H=K$ since $\rho_K^K=1$.

  There is a natural projection $\pi\colon\tilde T\to T$ where we take the barycentric coordinates of the image point in $T$ to simply be $\lambda_i=\lambda_i^T$.
\end{definition}

\begin{example}
  Consider a two-dimensional triangle $T$ with vertices $\{0,1,2\}$. Let $P_i$ denote the zero-dimensional face corresponding to that vertex, and let $E_i$ denote the opposite edge. Letting $\lambda_i$ denote $\lambda_i^T$, the coordinates we are working with are
  \begin{gather*}
    (\lambda_0,\lambda_1,\lambda_2),\\
    (\lambda_1^{E_0},\lambda_2^{E_0}),(\lambda_0^{E_1},\lambda_2^{E_1}),(\lambda_0^{E_2},\lambda_1^{E_2}),\\
    (\lambda_0^{P_0}),(\lambda_1^{P_1}),(\lambda_2^{P_2}).
  \end{gather*}
  Since barycentric coordinates sum to one, we have that $\lambda_i^{P_i}=1$, so we do not really need to think about these coordinates. Consequently, taking $K=P_i$ in the constraint $\lambda_i^H=\rho_K^H\lambda_i^K$, we just have $\rho_K^H=\lambda_i^H$ and $\lambda_i^K=1$, so the constraint is vacuously satisfied. So, the only nontrivial constraints come from $E_i\le T$. For example, taking $K=E_0\le H=T$, we get
  \begin{align*}
    \lambda_1&=(\lambda_1+\lambda_2)\lambda_1^{E_0},&
                                                      \lambda_2&=(\lambda_1+\lambda_2)\lambda_2^{E_0}.
  \end{align*}
  So, we see that, provided that $\lambda_1+\lambda_2\neq0$, we just have
  \begin{align*}
    \lambda_1^{E_0}&=\frac{\lambda_1}{\lambda_1+\lambda_2},&\lambda_2^{E_0}&=\frac{\lambda_2}{\lambda_1+\lambda_2},
  \end{align*}
  consistent with the fact that any interior point of $T$ has only one preimage in $\tilde T$, so all coordinates should be determined by $\lambda_0$, $\lambda_1$, and $\lambda_2$. More precisely, $\lambda_1^{E_0}$ and $\lambda_2^{E_0}$ are the coordinates of the unique point on edge $E_0$ that is on the ray from $P_0$ through the point $x$ with coordinates $(\lambda_0,\lambda_1,\lambda_2)$. On the other hand, if $\lambda_1=\lambda_2=0$, namely, at vertex $P_0$, then the constraint equations are vacuously satisfied, so $\lambda_1^{E_0}$ and $\lambda_2^{E_0}$ are free to represent any point on the edge $E_0$, corresponding to the fact that if $x=P_0$, then the ray through $P_0$ and $x$ is undetermined.
\end{example}

\begin{example}
  Now consider a three-dimensional tetrahedron $T$ with vertices $\{0,1,2,3\}$. As before, we will use the notation $P_i$ for vertices and $F_i$ for the opposite faces, but now we must also use $E_{ij}$ for the edge joining $i$ and $j$. As before, we have constraint equations that give, for example, for $F_0\le T$, that
  \begin{equation*}
    \lambda_i^{F_0}=\frac{\lambda_i}{\lambda_1+\lambda_2+\lambda_3}
  \end{equation*}
  for $i\in\{1,2,3\}$ when the denominator is nonzero, and otherwise the constraint is vacuously satisfied. Likewise, we have, for example, for $E_{12}\le T$, that
  \begin{equation*}
    \lambda_i^{E_{12}}=\frac{\lambda_i}{\lambda_1+\lambda_2}
  \end{equation*}
  for $i\in\{1,2\}$ when the denominator is nonzero.

  But, now, we also have constraints coming from, for instance, $E_{12}\le F_0$, which yield
  \begin{equation}\label{eq:constraintEF}
    \lambda_i^{E_{12}}=\frac{\lambda_i^{F_0}}{\lambda_1^{F_0}+\lambda_2^{F_0}}.
  \end{equation}
  for $i\in\{1,2\}$, again provided the denominator is nonzero. On the interior of $T$, this equation is automatically satisfied: it is simply the statement that
  \begin{equation*}
    \frac{\lambda_i}{\lambda_1+\lambda_2}=\frac{\frac{\lambda_i}{\lambda_1+\lambda_2+\lambda_3}}{\frac{\lambda_1}{\lambda_1+\lambda_2+\lambda_3}+\frac{\lambda_2}{\lambda_1+\lambda_2+\lambda_3}}.
  \end{equation*}
  But what if $\lambda_1=\lambda_2=\lambda_3=0$, so we are at vertex $P_0$? Then, the constraint equations $\lambda_i=(\lambda_1+\lambda_2+\lambda_3)\lambda_i^{F_0}$ and $\lambda_i=(\lambda_1+\lambda_2)\lambda_i^{E_{12}}$ are vacuously satisfied, so the $\lambda_i^{F_0}$ and $\lambda_i^{E_{12}}$ are free to roam. So now Equation~\eqref{eq:constraintEF} imposes a nontrivial constraint, saying that the choice of $\lambda_i^{E_{12}}$ is determined by the $\lambda_i^{F_0}$. Unless, of course, we also have $\lambda_1^{F_0}=\lambda_2^{F_0}=0$, in which case we are free to choose $\lambda_i^{E_{12}}$.

  As before, the $\lambda_i^{F_0}$ can be thought of as representing a ray from $P_0$ by specifying where it hits the opposite face $F_0$. Likewise, the $\lambda_i^{E_{12}}$ can be thought of as representing a half-plane emanating from the edge $E_{03}$ by specifying where it hits the opposite edge $E_{12}$. One can check that the conditions imply that the ray is contained in the half-plane. The discussion above can be reinterpreted as saying that, most of the time, the half-plane is determined by the ray, unless $\lambda_1^{F_0}=\lambda_2^{F_0}=0$, so $\lambda_3^{F_0}=1$, so the ray from $P_0$ goes along edge $E_{03}$, in which case the half-plane is unconstrained.
\end{example}

We now make explicit the above claims.
\begin{proposition}
  The map $\pi\colon\tl T\to T$ is a diffeomorphism between the interiors of $\tl T$ and $T$.
\end{proposition}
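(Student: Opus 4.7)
The plan is to exhibit an explicit smooth inverse to $\pi$ on $\pi^{-1}(\mr T)$, which we identify with the interior of $\tl T$. The forward map $\pi$ is a restriction of a linear projection $\prod_{K\le T}\bR^{V_K}\to\bR^V$ to the subset $\tl T$, hence manifestly smooth. So it suffices to define a smooth map $\mr T\to\tl T$ and check that it is a two-sided inverse.

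Given $(\lambda_i)_{i\in V}\in\mr T$ (so all $\lambda_i>0$), the candidate inverse is the obvious one suggested by the examples in the excerpt:
\begin{equation*}
  \lambda_i^K := \frac{\lambda_i}{\rho_K},\qquad \rho_K := \sum_{j\in V_K}\lambda_j,
\end{equation*}
for every face $K\le T$ and every $i\in V_K$. Each $\rho_K$ is strictly positive on $\mr T$, so these are smooth (in fact rational) functions with strictly positive values. I would then verify the two defining relations of $\tl T$. The normalization $\sum_{i\in V_K}\lambda_i^K = \rho_K/\rho_K = 1$ is immediate. For the compatibility relation, given $K\le H$ and $i\in V_K$, compute $\rho_K^H = \sum_{i\in V_K}\lambda_i^H = \sum_{i\in V_K}\lambda_i/\rho_H = \rho_K/\rho_H$, and then $\rho_K^H\lambda_i^K = (\rho_K/\rho_H)(\lambda_i/\rho_K) = \lambda_i/\rho_H = \lambda_i^H$.

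Finally, I would check that the two composites are the identity. The composition $\mr T\to\tl T\to\mr T$ produces $\lambda_i^T = \lambda_i/\rho_T = \lambda_i$, since $\rho_T = 1$. For the other composite, starting from any point of $\pi^{-1}(\mr T)$, the constraint $\lambda_i^T = \rho_K^T\lambda_i^K$ summed over $i\in V_K$ gives $\rho_K^T = \rho_K > 0$, whence $\lambda_i^K = \lambda_i^T/\rho_K^T$, which is precisely the inverse formula applied at $\pi$ of that point. I do not anticipate any real obstacle; the statement is essentially the tautology that the blow-up construction only modifies behavior near the boundary, and the proof simply unwinds the defining equations of $\tl T$ on the locus where the relevant denominators are nonzero.
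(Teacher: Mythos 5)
Your proof is correct and follows essentially the same route as the paper: both exhibit the explicit inverse $\lambda_i^K=\lambda_i/\rho_K$ on $\mr T$ and verify the two defining constraints of $\tl T$. Your additional check that the constraints force $\lambda_i^K=\lambda_i^T/\rho_K^T$ on $\pi^{-1}(\mr T)$ (so the map is genuinely a two-sided inverse) is a small point the paper leaves implicit, but it is a welcome completion rather than a deviation.
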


\begin{proof}
  Given a point on the interior of $T$ with barycentric coordinates $\lambda_i$, we can invert $\pi$ by setting
  \begin{equation*}
    \lambda_i^K=\frac{\lambda_i}{\rho_K},
  \end{equation*}
  where $\rho_K=\rho_K^T=\sum_{i\in V_K}\lambda_i$. Observe that this map is smooth on the interior of $T$, since $\rho_K$ does not vanish there. The constraint $\sum_{i\in V_K}\lambda_i^K=1$ follows from the definition of $\lambda_i^K$. The constraint $\lambda_i^H=\rho_K^H\lambda_i^K$ reads
  \begin{equation*}
    \frac{\lambda_i}{\rho_H}=\rho_K^H\frac{\lambda_i}{\rho_K},
  \end{equation*}
  which is true because
  \begin{equation*}
    \rho_K^H=\sum_{i\in V_K}\lambda_i^H=\sum_{i\in V_K}\frac{\lambda_i}{\rho_H}=\frac{\rho_K}{\rho_H}.\qedhere
  \end{equation*}
\end{proof}

\begin{proposition}
  The blow-up Whitney forms $\psi_F$ can be pulled back to smooth forms $\tl\psi_F$ on $\tl T$.
\end{proposition}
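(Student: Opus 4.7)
The plan is to use the explicit decomposition $\psi_F=p_F\omega_F$ from Proposition~\ref{prop:psiexplicit} and show separately that the pullbacks $\pi^*\omega_F$ and $\pi^*p_F$ extend smoothly to $\tl T$.

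For $\omega_F$: for each $j\in J$, let $K_j$ denote the face of $T$ whose vertex set is $V_j$. The coordinate functions $\lambda_i^{K_j}$ for $i\in V_j$ are smooth on $\tl T$ and satisfy $\sum_{i\in V_j}\lambda_i^{K_j}=1$, so they assemble into a smooth map $M_j\colon\tl T\to T_{K_j}$. On the interior, the defining relation $\lambda_i^T=\rho_j\lambda_i^{K_j}$ combined with the dilation invariance of $\omega_{V_j}$ (Proposition~\ref{prop:quasisphere}) yields $\pi^*\omega_{V_j}=M_j^*\omega_{V_j}$. Since $\omega_{V_j}$ restricted to $T_{K_j}$ is the normalized volume form there (Proposition~\ref{prop:intwhitney}) and is in particular smooth, $M_j^*\omega_{V_j}$ extends smoothly to all of $\tl T$, and wedging over $j\in J$ gives smoothness of $\pi^*\omega_F$.

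For $p_F$: I would introduce the descending chain of faces $L_s$ with vertex set $V_s\cup\dotsb\cup V_{n-k}$, so that $L_{n-k}\le\dotsb\le L_1\le L_0=T$. Setting $\mu_s:=\rho_{L_s}^{L_{s-1}}$ for $1\le s\le n-k$ gives smooth functions $\tl T\to[0,1]$, and the multiplicativity $\rho_K^L=\rho_H^L\rho_K^H$ for $K\le H\le L$ yields $\rho_{L_s}^T=\mu_1\dotsm\mu_s$ on the interior. Consequently $\rho_j=\mu_1\dotsm\mu_j(1-\mu_{j+1})$ for $j<n-k$ and $\rho_{n-k}=\mu_1\dotsm\mu_{n-k}$. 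Invoking the Poisson process interpretation of $p_F$ and expanding by conditioning on each successive arrival as in Example~\ref{eg:poissonp}, $p_F$ becomes a finite sum, over ``relevant'' particle sequences of length at most $\abs V$, of products of transition probabilities $\rho_j/\rho_{L_s}^T$ with $s\le j$. Each such ratio equals $\mu_{s+1}\dotsm\mu_j(1-\mu_{j+1})$ or $\mu_{s+1}\dotsm\mu_{n-k}$---a polynomial in $\mu_1,\dotsc,\mu_{n-k}$---so $p_F$ is polynomial in the $\mu_s$ and hence smooth on $\tl T$.

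Combining the two parts, $\pi^*\psi_F=(\pi^*p_F)(\pi^*\omega_F)$ is smooth on $\tl T$. I expect the main obstacle to lie in justifying the polynomiality of $p_F$: while the examples in Section~\ref{sec:poisson} make the structure transparent, one must carefully verify that only finitely many arrival sequences contribute (``retiring'' a type after its count is met bounds the total arrivals by $\abs V$) and that each contributes a polynomial factor in the $\mu_s$, via the identity $\rho_j/\rho_{L_s}^T=\mu_{s+1}\dotsm\mu_j(1-\mu_{j+1})$.
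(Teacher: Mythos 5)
Your proof is correct. The $\omega_F$ half is essentially the paper's argument: the paper likewise observes that $n_j!\,i_{X_{\id}}(\d\lambda^{K}_{K})$ in the blow-up coordinates $\lambda_i^{K}$ is smooth on $\tl T$ and agrees with $\omega_{V_j}$ on the interior, via the same dilation computation as in Proposition~\ref{prop:quasisphere}; your phrasing in terms of the smooth map $M_j\colon\tl T\to T_{K_j}$ is just a cleaner packaging of that. For $p_F$ the two arguments genuinely diverge. The paper stays with the volume description: it rewrites the defining inequalities of $R_\brho$ as $\frac{r_j}{r_{j-1}+r_j}\ge\frac{\rho_j}{\rho_{j-1}+\rho_j}$, notes that the right-hand sides are (sums of) ambient blow-up coordinates $\rho^{H_j}_{K_j}$ with $V_{H_j}=V_{j-1}\cup V_j$, and concludes that $p_F$ is smooth as a function of these parameters --- leaving the smooth dependence of the volume on the thresholds implicit. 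You instead exploit the Poisson expansion to exhibit $p_F$ explicitly as a polynomial in the tail-sum ratios $\mu_s=\rho_{L_s}^{L_{s-1}}$, which are manifestly smooth on $\tl T$. Your accounting is right: validity of an arrival sequence forces the retired types to form a prefix $\{0,\dotsc,s-1\}$ at every stage, so every transition probability is of the form $\rho_j/\rho_{L_s}^T$ with $s\le j$, each such ratio is the polynomial $\mu_{s+1}\dotsm\mu_j(1-\mu_{j+1})$ (or $\mu_{s+1}\dotsm\mu_{n-k}$ when $j=n-k$), and there are finitely many sequences, each of length $\abs V$. This buys you more than the paper states --- polynomiality of $p_F$ in explicit blow-up coordinates, not merely smoothness --- at the cost of invoking the Poisson machinery rather than the elementary description of $R_\brho$.
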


\begin{proof}
  Let $F$ be a flag. Recall that, at a point $\blambda$, the coefficient $p_F$ of $\psi_F$ is the volume of the subset of the interior of $T$ defined by the equations
  \begin{equation*}
    \frac{r_{j-1}}{\rho_{j-1}}\le\frac{r_j}{\rho_j}.
  \end{equation*}
  Rearranging, adding one, and inverting, we obtain the equivalent inequality
  \begin{equation*}
    \frac{r_j}{r_{j-1}+r_j}\ge\frac{\rho_j}{\rho_{j-1}+\rho_j}=\rho^{H_j}_{K_j},
  \end{equation*}
  where $K_j$ has vertices $V_{j-1}$ and $H_j$ has vertices $V_{j-1}\cup V_j$. We conclude that $p_F$ is smooth in the $\rho^{H_j}_{K_j}$. The $\rho^{H_j}_{K_j}$ are not smooth on $T$, but they are smooth on $\tl T$, being just sums $\sum_{i\in V_K}\lambda_i^H$ of the coordinates of the ambient space. Hence, $p_F$ is smooth on $\tl T$.

  As for $\omega_F$, we show that $\omega_{V_j}$ is smooth on the blow-up. Let $K$ be the face with vertices $V_j$, so per our notation we have $V_K=V_j$ and $\rho_K=\rho_j$. As before, let $n_j=\dim K=\abs{V_j}-1$. Then the Whitney form in terms of the blow-up coordinates $\lambda_i^K$ is given by $n_j! i_{X_{\id}}(\d\lambda^K_K)$, where we extend to the orthant $\bR_{\ge0}^{V_K}$ and set $\d\lambda^K_K=\bigwedge_{i\in V_K}\d\lambda_i^K$. So, this form is smooth on the blow-up, and we will show that it is $\omega_{V_K}$ on the interior.

  To do so, we must use care when extending to the orthant. Essentially, we follow the proof of Proposition~\ref{prop:quasisphere}, with $V_K$ in place of $V$, $\rho_K$ in place of $\rho$, and $\lambda_i^K$ in place of $\theta_i$. So set $\sigma=\sum_{i\in V_K}\lambda_i^K$; in the end, we care about the set $\sigma=1$. But, while working on the extended space, in place of the equation $\lambda_i=\rho_K\lambda_i^K$ we will use the equation $\sigma\lambda_i=\rho_K\lambda_i^K$. Observe that $X_{\id}[\sigma]=\sigma$ and $X_{\id}[\rho_K]=\rho_K$, so $X_{\id}[\frac{\sigma}{\rho_K}]=0$. So then, using this fact along with $\lambda_i^K=\frac{\sigma}{\rho_K}\lambda_i$, we conclude that the above form is
  \begin{equation*}
    n_j!i_{X_{\id}}(\d\lambda^K_K)=n_j!\left(\tfrac\sigma{\rho_K}\right)^{\abs{V_K}}i_{X_{\id}}\d\lambda_K,
  \end{equation*}
  where $\d\lambda_K=\bigwedge_{i\in V_K}\d\lambda_i$. Restricting to $\sigma=1$ and recalling that $V_K=V_j$ and $\rho_K=\rho_j$, we see that we indeed have our previously-defined form $\omega_{V_j}$.

  So, both $p_F$ and $\omega_F$ are smooth on $\tl T$, and hence so is $\psi_F=p_F\omega_F$.
\end{proof}

\begin{proposition}\label{prop:faces}
  Each flag $F$ yields a $k$-dimensional face of $\tl T$. The subfaces of this face correspond to flags $F'$ that are formed by further subdividing the partition sets of $F$.

  Specifically, for a flag $F$, the face of $\tl T$ corresponding to the flag $F$ is given by the equations $\lambda_{i'}^{E_{ii'}}=0$ for every edge $E_{ii'}$ joining vertices $i$ and $i'$ with $i\in V_j$, $i'\in V_{j'}$ and $j<j'$.

  Moreover, this face is isomorphic to $\tl\Theta_F:=\prod_j\tl T_j$, where, as before $T_j$ denotes the simplex with vertex set $V_j$, and $\tl T_j$ is its blow up. In particular, the interior of the face is isomorphic to the interior of $\Theta_F$.
\end{proposition}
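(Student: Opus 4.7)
The plan is to construct an explicit isomorphism $\Phi\colon\prod_{j\in J}\tl T_j\to\tl T$ whose image is exactly the set cut out by the proposed face equations, and then deduce the subface correspondence by induction on $n$. For each face $K\le T$ let $j^*(K):=\min\{j\in J:V_K\cap V_j\ne\emptyset\}$, which is well-defined because $V=\bigsqcup_jV_j$. Given a tuple $(p_j)_{j\in J}$ with $p_j\in\tl T_j$ having coordinates $\mu_i^L$ for $L\le T_j$ and $i\in V_L$, define $\Phi((p_j)_j)$ by
\[
  \lambda_i^K:=\begin{cases}\mu_i^{K\cap V_{j^*(K)}}&\text{if }i\in V_{j^*(K)},\\ 0&\text{otherwise.}\end{cases}
\]
The subset $K\cap V_{j^*(K)}$ of $V_{j^*(K)}$ is nonempty by the definition of $j^*$, so it indexes a face of $T_{j^*(K)}$ and the formula makes sense.

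First I would verify the ambient constraints $\sum_{i\in V_K}\lambda_i^K=1$ and $\lambda_i^H=\rho_K^H\lambda_i^K$ for $K\le H$, $i\in V_K$. Since $V_K\subseteq V_H$ forces $j^*(K)\ge j^*(H)$, there are two cases. When $j^*(K)=j^*(H)=:j^*$, both sides involve only $i\in V_{j^*}$ and the relation reduces to the defining constraint of $\tl T_{j^*}$ applied to $K\cap V_{j^*}\le H\cap V_{j^*}$, using $\rho_K^H=\sum_{i\in V_K\cap V_{j^*}}\mu_i^{H\cap V_{j^*}}$. When $j^*(K)>j^*(H)$, every $i\in V_K$ lies outside $V_{j^*(H)}$, so both $\lambda_i^H$ and $\rho_K^H$ vanish and the constraint is trivially $0=0$. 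The face equations are also immediate: for $i\in V_j$, $i'\in V_{j'}$ with $j<j'$ one has $j^*(E_{ii'})=j$, so $i'\notin V_{j^*(E_{ii'})}$ and $\lambda_{i'}^{E_{ii'}}=0$ by construction.

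The key step is surjectivity of $\Phi$ onto the face, which I would prove via a propagation lemma: at any point of $\tl T$ satisfying the face equations, $\lambda_i^K=0$ whenever $i\in V_K\setminus V_{j^*(K)}$. Given such an $i$, pick any $i^*\in V_K\cap V_{j^*(K)}$; since $i^*$ lies in an earlier partition set than $i$, the face equation gives $\lambda_i^{E_{i^*i}}=0$, and the ambient constraint applied to $E_{i^*i}\le K$ forces $\lambda_i^K=\rho_{E_{i^*i}}^K\lambda_i^{E_{i^*i}}=0$. Consequently the only potentially nonzero coordinates at a face point are the $\lambda_i^K$ with $i\in V_K\cap V_{j^*(K)}$; these depend only on $K\cap V_{j^*(K)}$ and satisfy exactly the defining relations of $\tl T_{j^*(K)}$, so they reassemble into a tuple $(p_j)_j$ inverse to $\Phi$. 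Injectivity is clear since the coordinates of $\prod_j\tl T_j$ embed into those of $\tl T$. The dimension count $\sum_j(\abs{V_j}-1)=n+1-(n-k+1)=k$ and the interior statement (each $\pi\colon\tl T_j\to T_j$ restricts to a diffeomorphism on interiors) follow immediately.

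For the subface correspondence I would induct on $n$: by the inductive hypothesis, each face of $\tl T_j$ is cut out by a flag $F_j$ on $V_j$, so faces of $\prod_j\tl T_j$ correspond to tuples $(F_j)_{j\in J}$, and concatenating these in the order of $J$ yields exactly the flags refining $F$ (in the sense that each $V_j$ is split into an ordered sequence of subsets). Under $\Phi$, the face equations on $\tl T$ for the refined flag $F'$ match the factorwise face equations on $\prod_j\tl T_j$, closing the induction. The hard part is the propagation lemma: it is the only nonroutine input, and it is what makes the edge-based defining equations suffice to pin down the coordinates on every higher-dimensional subsimplex of the face.
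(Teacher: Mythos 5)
Your proposal is correct and follows essentially the same route as the paper: the same explicit construction of a point of $\tl T$ from a tuple in $\prod_j\tl T_j$ (setting $\lambda_i^K$ equal to the coordinate on $K\cap V_{j^*(K)}$ and zero otherwise), the same two-case verification of the ambient constraints, and the same key step --- your ``propagation lemma'' is exactly the paper's uniqueness argument, deducing $\lambda_{i'}^K=0$ from the edge equation $\lambda_{i'}^{E_{ii'}}=0$ via $\lambda_{i'}^K=\rho_{E_{ii'}}^K\lambda_{i'}^{E_{ii'}}$. The only cosmetic difference is that you establish the subface correspondence by induction on $n$ through the product structure, whereas the paper simply observes that refining the flag enlarges the set of defining equations.
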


\begin{proof}
  We begin with the claimed incidence relation, which is easy to show. Observe that we have a constraint for every pair of vertices that are in different sets of the partition. Therefore, if $F'$ is formed by subdividing a flag $F$, the set of constraints grows, and so the face corresponding to $F'$ is a subset of the face corresponding to $F$.

  For the remaining claims, we must show that the equations $\lambda_{i'}^{E_{ii'}}=0$, along with a point in the $k$-dimensional space $\tl\Theta_F$, determine a unique point in $\tl T$. Before we compute, we give some intuition, via the Poisson process framework. As before, we think of the $\lambda_i$ as rates of Poisson processes, but now the rates in later $V_j$ are ``infinitesimal'' when compared to the rates in earlier $V_j$. Nonetheless, within a $V_j$, we can compare the rates to each other. The quantity $\lambda_i^K$ is, as before, the probability that particle $i$ is received before any other particles in $V_K$. Previously, this would just be $\frac{\lambda_i}{\sum_{i'\in V_K}\lambda_{i'}}$, but now we consider rates infinitesimal relative to $\lambda_i$ to be zero, and we also consider the case that $\lambda_i$ is itself infinitesimal with respect to another $\lambda_{i'}$ for $i'\in V_K$, in which case the probability is zero.

  So, let $K$ be an arbitrary face. Let $j$ be the smallest value such that $V_j$ contains a vertex of $K$. Let $L$ be the face of $K$ given by the vertices $V_K\cap V_j$, and let $L'$ be the (potentially empty) face of $K$ given by the remaining vertices $V_K\setminus V_j$, so $V_K=V_L\sqcup V_{L'}$, with $V_L$ nonempty. Since $L\le T_j$, we have the value of $\lambda_i^L$ from the given point in $\tl\Theta_F:=\prod_j\tl T_j$. So, we set $\lambda_i^K:=\lambda_i^L$ for $i\in V_L$. Meanwhile, for $i'\in V_{L'}$, we set $\lambda_{i'}^K:=0$. We easily verify that $\lambda_i^K:=\lambda_i^L$ is consistent notation in the case where $K=L$ due to $K\le T_j$ for some $j$.

  We must show that this choice of $\lambda_i^K$ defines a valid point in $\tl T$ that satisfies the equations, and that this choice is unique. Working first on uniqueness, note that for $i\in V_L$ and $i'\in V_{L'}$, we have, by construction, that $i$ is in an earlier partition set compared to $i'$, so we have the constraint $\lambda_{i'}^{E_{ii'}}=0$. Since $K$ contains $E_{ii'}$, the definition of $\tl T$ forces $\lambda_{i'}^K=\rho_{E_{ii'}}^K\lambda_{i'}^{E_{ii'}}=0$. We conclude then that the constraints force $\lambda_{i'}^K=0$ for all $i'\in V_{L'}$. Note that the argument relied on $V_L$ being nonempty, but that it is possible for $V_{L'}$ to be empty, in which case the statement is vacuous. So then, using this fact and the definition of $\tl T$, we have that $1=\sum_{i\in V_K}\lambda_i^K=\sum_{i\in V_L}\lambda_i^K=\rho_L^K$. So then, the definition of $\tl T$ forces, for $i\in V_L$, that $\lambda_i^K=\rho_L^K\lambda_i^L=\lambda_i^L$.

  We now check that our choice of $\lambda_i^K$ satisfies all of the constraints. First, in the case where $K=E_{ii'}$ with $i\in V_j$, $i'\in V_{j'}$, and $j<j'$, we have that $L$ is the vertex $i$ and $L'$ is the vertex $i'$, and so we set $\lambda_{i'}^{E_{ii'}}=\lambda_{i'}^K=0$ as required.

  Next, we verify the constraints in the definition of $\tl T$. First, we have that $\sum_{i\in V_K}\lambda_i^K=\sum_{i\in V_L}\lambda_i^L=1$, using the fact that $\lambda_{i'}^K=0$ for $i'\notin V_L$ and the fact that the $\lambda_i^L$ come from a valid point of $\tl T_j$ and hence must sum to one.

  So now it remains to verify the constraint that $\lambda_i^H=\rho_K^H\lambda_i^K$ for all $i\in V_K$ when $K\le H$. So now, let $j$ be the smallest value such that $V_j$ contains a vertex of $H$. Then either $V_j$ also contains a vertex of $K$, or it does not. If not, then $\lambda_i^H=0$ for all $i\in V_K$, so $\rho_K^H=0$, and the constraint reads $0=0$. If so, then $j$ is also the smallest value such that $V_j$ contains a vertex of $K$. So, as before, let $L$ have vertices $V_K\cap V_j$ and $L'$ have vertices $V_K\setminus V_j$, and now let $M$ have vertices $V_H\cap V_j$ and $M'$ have vertices $V_H\setminus V_j$. For $i'\in V_{L'}\subseteq V_{M'}$, by construction we have $\lambda_{i'}^H=\lambda_{i'}^K=0$, so the constraint is satisfied. Meanwhile, for $i\in V_L\subseteq V_M$, because $L\le M\le T_j$ and we have a valid point of $\tl T_j$, we have that $\lambda_i^M=\rho_L^M\lambda_i^L$. By construction, we have $\lambda_i^H=\lambda_i^M$ and $\lambda_i^K=\lambda_i^L$. In particular, $\rho_K^H=\sum_{i\in V_K}\lambda_i^H=\sum_{i\in V_L}\lambda_i^H=\sum_{i\in V_L}\lambda_i^M=\rho_L^M$. So, substituting into $\lambda_i^M=\rho_L^M\lambda_i^L$ we obtain $\lambda_i^H=\rho_K^H\lambda_i^K$.
\end{proof}

\begin{proposition}\label{prop:dof}
  Let $F$ be a flag, let $\tl\psi$ be a $k$-form on $\tl T$ that is smooth up to and including the boundary, and let $\psi$ be the corresponding $k$-form on $T$, smooth on the interior but generally discontinuous at the boundary. Then evaluating the degree of freedom $\Psi_F(\psi)$ as defined in Definition~\ref{def:dof} is equivalent to integrating $\tl\psi$ over the face of $\tl T$ corresponding to $F$.
\end{proposition}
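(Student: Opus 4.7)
The plan is to lift the degree-of-freedom procedure from $T$ to the blow-up $\tl T$ via $\pi^{-1}$, and to identify the iterated limit in Definition~\ref{def:dof} with a limiting inclusion of $\mr\Theta_F$ into the face of $\tl T$ corresponding to $F$.

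First, by Proposition~\ref{prop:faces}, the face of $\tl T$ has interior isomorphic to $\mr\Theta_F$ via some inclusion $\iota_0\colon\mr\Theta_F\to\tl T$. Since the boundary of the face has measure zero and $\tl\psi$ is smooth up to the boundary, $\int_{\text{face}}\tl\psi=\int_{\Theta_F}\iota_0^*\tl\psi$. For each $\brho\in\mr R_F$, composing the inclusion $\{\brho\}\times\mr\Theta_F\hookrightarrow\mr T$ from Definition~\ref{def:dof} with $\pi^{-1}$ yields a smooth inclusion $\iota_\brho\colon\mr\Theta_F\to\mr{\tl T}$, and by functoriality $\iota_\brho^*\tl\psi=\psi_\brho$. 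Hence
\begin{equation*}
  \Psi_F(\psi)=\int_{\Theta_F}\lim_{\rho_1\to0}\dotsb\lim_{\rho_{n-k}\to0}\iota_\brho^*\tl\psi,
\end{equation*}
so it suffices to show that the iterated limit equals $\iota_0^*\tl\psi$ pointwise on $\mr\Theta_F$.

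For the convergence, I would compute the ambient blow-up coordinates of $\iota_\brho(\btheta)$ in $\tl T$ directly. For a face $K\le T$ and $i\in V_K$, the quasi-cylindrical relation $\lambda_i=\rho_{j(i)}\theta_i$ (with $j(i)$ the index satisfying $i\in V_{j(i)}$) yields
\begin{equation*}
  \lambda_i^K\circ\iota_\brho(\btheta)=\frac{\rho_{j(i)}\theta_i}{\sum_{j'}\rho_{j'}\sum_{i'\in V_K\cap V_{j'}}\theta_{i'}}.
\end{equation*}
Letting $j$ be the smallest index with $V_K\cap V_j\neq\emptyset$ and setting $L=V_K\cap V_j$, the iterated limit $\rho_{n-k}\to0,\dotsc,\rho_1\to0$ drives every denominator term with $j'>j$ to zero, leaves a single factor of $\rho_j$ in the denominator to cancel the $\rho_{j(i)}$ in the numerator when $i\in V_L$, and sends the numerator to zero before the denominator when $i\in V_K\setminus V_L$. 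The resulting limits $\theta_i/\sum_{i'\in V_L}\theta_{i'}$ for $i\in V_L$ and $0$ for $i\in V_K\setminus V_L$ match exactly the face-point coordinates described in the proof of Proposition~\ref{prop:faces}. An analogous bookkeeping for the partials $\partial(\lambda_i^K\circ\iota_\brho)/\partial\theta_{i'}$ shows that $d\iota_\brho|_\btheta$ converges to $d\iota_0|_\btheta$ under the same iterated limits.

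Since $\tl\psi$ is smooth on $\tl T$ up to and including the boundary, the value of $\iota_\brho^*\tl\psi$ at $\btheta$ depends continuously on the zeroth- and first-order jets of $\iota_\brho$ at $\btheta$; the pointwise convergence of these jets therefore yields $\iota_\brho^*\tl\psi\to\iota_0^*\tl\psi$ pointwise on $\mr\Theta_F$, completing the reduction in the first paragraph. The hardest part will be that the limits are sequential rather than joint: the coordinate expressions above involve cross-ratios $\rho_{j'}/\rho_{j''}$ whose joint limits are ill-defined, and the prescribed ordering $\rho_{n-k}\to0,\dotsc,\rho_1\to0$ is precisely the one along which each such ratio has its numerator driven to zero before its denominator. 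One must check term by term that this cancellation is uniform enough to survive passage through the pullback, both for $\iota_\brho$ itself and for its differential.
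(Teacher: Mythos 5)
Your proposal is correct and follows essentially the same route as the paper's proof: lift $\psi$ to $\tl T$ via $\pi^{-1}$, identify the interior of the face with $\mr\Theta_F$ via Proposition~\ref{prop:faces}, and show that the sequential limits $\rho_{n-k}\to0,\dotsc,\rho_1\to0$ at fixed $\btheta$ carry the lifted points into the corresponding point of that face, so that smoothness of $\tl\psi$ on $\tl T$ gives $\lim\psi_\brho=\iota_0^*\tl\psi$. The only difference is one of bookkeeping: the paper verifies convergence just for the edge coordinates $\lambda_{i'}^{E_{ii'}}$ (which by Proposition~\ref{prop:faces} already pin down the face) and leaves the convergence of the differential implicit, whereas you track all coordinates $\lambda_i^K$ and the first-order jet explicitly --- a harmless, slightly more detailed version of the same argument.
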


\begin{proof}
  We first note the equivalence of the $\theta_i$ from the quasi-cylindrical coordinate system and the $\lambda_i^{T_j}$ from the definition of the blow-up. Indeed, the defining equations of the $\theta_i$ are $\lambda_i=\rho_j\theta_i$ for all $i\in V_j$, where $\rho_j=\sum_{i\in V_j}\lambda_i$. As shown above, the isomorphism on the interior between $T$ and $\tl T$ is given by $\lambda_i=\lambda_i^T$. Finally, the defining equations of $\tl T$ yield $\lambda_i^T=\rho_{T_j}^T\lambda_i^{T_j}$, where $\rho_{T_j}^T=\sum_{i\in V_j}\lambda_i^T$.

  So, the $\Theta_F$ discussed in the quasi-cylindrical coordinate section and the $\Theta_F$ discussed in the blow-up simplex section are the same. In particular, integrating over the face of $\tl T$ corresponding to $F$ is equivalent to integrating over $\tl\Theta_F$, which is equivalent to integrating over its interior, which is the same as integrating over the interior of $\Theta_F$.

  It remains to analyze the limit procedure in Definition~\ref{def:dof}. When we take the limit as $\rho_{n-k}\to0$ while leaving $\lambda_i$ for $i\notin V_{n-k}$ nonzero, the effect is to send $\lambda_{i'}^{E_{ii'}}=\frac{\lambda_{i'}}{\lambda_i+\lambda_{i'}}$ to zero whenever $i\notin V_{n-k}$ and $i'\in V_{n-k}$. Similarly, when we take the limit as $\rho_{j'}\to0$ while keeping $\lambda_i$ nonzero for $i\in V_j$ with $j<j'$, the effect is to send $\lambda_{i'}^{E_{ii'}}$ to zero whenever $i\in V_j$ with $j<j'$ and $i'\in V_{j'}$. So, the limiting procedure precisely lands us in the face of $\tl T$ corresponding to $F$. Moreover, the limiting procedure in Definition~\ref{def:dof} occurs pointwise for a fixed $\btheta\in\mr\Theta_F$; consequently, we land at the point in the interior of the face of $\tl T$ corresponding to this value of $\btheta$. Per the above isomorphism between the interior of the face of $\tl T$ corresponding to $F$ and $\mr\Theta_F$, we conclude that every point in the interior of the face is attained by this limit procedure.
\end{proof}

\begin{theorem}
  The cohomology of the complex of blow-up Whitney forms is zero except in degree zero, where it is generated by the constants.
\end{theorem}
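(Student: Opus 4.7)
The plan is to exhibit an isomorphism of cochain complexes between the complex of blow-up Whitney forms on $T$ and the cellular cochain complex of the blow-up $\tilde T$, and then to observe that $\tilde T$ is contractible.

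First, define a map $I$ from blow-up Whitney $k$-forms to cellular $k$-cochains on $\tilde T$ by sending a form $\psi$ to the cochain whose value on the face corresponding to a flag $F$ is $\Psi_F(\psi)$. By Proposition~\ref{prop:dof}, this is the same as the integral of $\tilde\psi$ over the corresponding face of $\tilde T$. By Proposition~\ref{prop:faces}, the $k$-dimensional faces of $\tilde T$ are in bijection with flags $F$ of partition length $n+1-k$, which in turn index a basis of $b\mathcal{P}_1^-\Lambda^k(T)$. Unisolvence then tells us that $I$ is a degree-wise isomorphism of vector spaces, since $\{\psi_F\}$ is the basis dual to integration over these faces.

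Next, I would check that $I$ is a cochain map, i.e.\ that it intertwines $d$ with the cellular coboundary $\delta$. This follows from Stokes's theorem on $\tilde T$: for any blow-up Whitney form $\psi$ and any flag $F'$ of length $n-k$, the pullback $\tilde\psi$ is smooth up to the boundary, so
\begin{equation*}
\int_{\Sigma_{F'}} d\tilde\psi = \int_{\partial\Sigma_{F'}} \tilde\psi = \sum_F [\Sigma_{F'}:\Sigma_F]\int_{\Sigma_F}\tilde\psi,
\end{equation*}
where $\Sigma_F$ denotes the face of $\tilde T$ corresponding to $F$ and $[\Sigma_{F'}:\Sigma_F]$ is the incidence number. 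By Proposition~\ref{prop:faces}, the $k$-dimensional boundary faces of $\Sigma_{F'}$ are exactly the $\Sigma_F$ for flags $F$ refining $F'$ by splitting one partition set in two, which matches the indexing in Theorem~\ref{thm:d} (merging two consecutive partition elements of $F$ yields $F'$). Thus $I\circ d=\delta\circ I$, and the signs take care of themselves because they come from orientations on $\tilde T$ in both expressions.

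Finally, I would compute the cellular cohomology of $\tilde T$. The blow-up $\tilde T$ is combinatorially a permutahedron and, more importantly, it is homeomorphic to $T$ itself via the projection $\pi\colon\tilde T\to T$ (at least topologically, it is a manifold with corners obtained from $T$ by iterated real blow-ups, which does not change the homotopy type of an already-contractible space). Hence $\tilde T$ is contractible, and its cellular cohomology vanishes except in degree zero, where it is one-dimensional and generated by the cochain that assigns $1$ to every vertex of $\tilde T$. Under $I^{-1}$, this corresponds precisely to the constant function $1=\sum_F\psi_F$ on $T$, completing the proof.

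The main obstacle I expect is verifying that the signs in Theorem~\ref{thm:d} really do agree with the cellular coboundary signs coming from the chosen orientations of the faces of $\tilde T$; Stokes's theorem on $\tilde T$ is the natural tool, but one needs to set up the orientations on the $\Sigma_F$ consistently with the orientations used to define $\psi_F$ (via the quasi-cylindrical identification $\mr R_F\times\mr\Theta_F\cong\mr T$). A secondary subtlety is asserting contractibility of $\tilde T$, which is geometrically clear from the blow-up construction but which one may wish to justify explicitly by deformation-retracting $\tilde T$ onto a point inside the interior.
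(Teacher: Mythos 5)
Your proposal is correct and follows essentially the same route as the paper: integration over the faces of $\widetilde{T}$ defines a map to cellular cochains, which is a degree-wise isomorphism by unisolvence together with Proposition~\ref{prop:dof}, commutes with $d$ by Stokes's theorem, and therefore identifies the cohomology with the cellular cohomology of the ball $\widetilde{T}$. The extra care you flag about matching the signs in Theorem~\ref{thm:d} with the cellular coboundary, and about justifying contractibility, are reasonable elaborations of points the paper treats more briefly, not a different argument.
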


\begin{proof}
  We view $\tl T$ as a cell complex, where the cells are the faces of $\tl T$ corresponding to the flags $F$, as discussed above. Viewing blow-up Whitney forms as smooth $k$-forms on $\tl T$, the integration pairing on $k$-cells yields a map between the space of blow-up Whitney $k$-forms and the space of cellular $k$-cochains. By unisolvence and the preceding proposition, this map is an isomorphism. Moreover, this map commutes with $d$, since Stokes's theorem tells us that $d$ is dual to the boundary operator $\partial$, which is the definition of $d$ on cellular cochains. Therefore, we have an isomorphism of cochain complexes, so we have an isomorphism of cohomology. Since $\tl T$ is topologically a ball, its cellular cohomology vanishes, except in degree zero, where it is generated by the constants.
\end{proof}

\subsection{Speculation on global cohomology of triangulations}
\subsubsection{Informal discussion}\label{subsubsec:informal}
In the preceding section, we showed that the complex of blow-up Whitney forms has the correct cohomology on a single simplex. For ordinary Whitney forms, we know that the complex has the right cohomology not only on a single simplex but also on a simplicial triangulation with many elements. Is the same true of blow-up Whitney forms? Answering this question is beyond the scope of this paper, but it turns out that even \emph{asking} the right question is subtle in the context of blow-up Whitney forms.

Constructing finite element spaces involves two ingredients. The first ingredient is understanding the space on a single element; it is this first ingredient that we have understood for blow-up Whitney forms. The second ingredient is imposing continuity conditions between elements, or, equivalently, understanding how to properly identify degrees of freedom on adjacent elements. For ordinary Whitney forms, doing so is easy: the degrees of freedom correspond to faces (of any dimension), so if two elements share a face, we identify those degrees of freedom. Equivalently, we insist that if elements $T$ and $T'$ share a face $K$, then restriction of $\psi$ on $T$ to $K$ should be equal to the restriction of $\psi$ on $T'$ to $K$. So, then, the complex of Whitney forms is once again isomorphic to the complex of simplicial cochains, this time globally, and so the cohomology of the Whitney complex is equal to the simplicial cohomology of the full space.

So then, how should we relate degrees of freedom for blow-up Whitney forms? Consider the case of scalar fields in two dimensions. If we identify all the degrees of freedom associated to a vertex, we will, in particular, identify degrees of freedom within a single simplex. As a result, we will obtain the usual Lagrange elements, and our theory will end up reducing to the ordinary Whitney complex.

So then, as discussed in the introduction, we could identify degrees of freedom corresponding to flags $P<E<T$ and $P<E<T'$, where $T$ and $T'$ are adjacent elements that share edge $E$; see Figure~\ref{fig:glue}, top left. (Recall from the introduction that each flag given by $(V_0,V_1,\dots,V_{n-k})$ is identified with the increasing sequence of faces $T_{V_0} < T_{V_0 \cup V_1} < \dots < T_{V_0 \cup V_1 \cup \dots \cup V_{n-k}}$.) We see that there are no identifications within a triangle, so we still have the blow-up space on each triangle. Moreover, being in the kernel of $d$ means being constant on each triangle, and these identifications are enough to enforce constancy on each connected component of the triangulation, so we have the right zeroth cohomology.

What about first cohomology? Intuitively, if we join blown up simplices along shared edges, we will still have ``holes'' at the vertices, so we expect to get the wrong first cohomology.  See Figure~\ref{fig:holes} for an illustration. To solve this problem, we need not only to identify degrees of freedom corresponding to flags $E<T$ and $E<T'$, but also declare that, for each vertex $P$, the degrees of freedom associated to $P<T$ must sum to zero over the triangles $T$ containing $P$. Intuitively, we are saying that the integral of the one-form around the ``hole'' is zero. Note that the gradient of a scalar field will always have zero integrals over these closed loops, so we still have a complex. But now we prevent these holes from contributing to the cohomology.

\begin{figure}
  \begin{tikzpicture}[scale=0.3]
    \begin{pgfonlayer}{nodelayer}
      \node [style=none] (0) at (0, 0) {};
      \node [style=none] (1) at (1, 0) {};
      \node [style=none] (2) at (1, 1) {};
      \node [style=none] (3) at (0, 1) {};
      \node [style=none] (4) at (0, 5) {};
      \node [style=none] (5) at (0, 6) {};
      \node [style=none] (6) at (1, 6) {};
      \node [style=none] (7) at (1, 7) {};
      \node [style=none] (8) at (0, 7) {};
      \node [style=none] (9) at (5, 0) {};
      \node [style=none] (10) at (6, 0) {};
      \node [style=none] (11) at (7, 0) {};
      \node [style=none] (12) at (7, 1) {};
      \node [style=none] (13) at (6, 1) {};
      \node [style=none] (14) at (5, 5) {};
      \node [style=none] (15) at (6, 5) {};
      \node [style=none] (16) at (7, 6) {};
      \node [style=none] (17) at (6, 6) {};
      \node [style=none] (18) at (5, 6) {};
      \node [style=none] (19) at (6, 7) {};
      \node [style=none] (20) at (7, 7) {};
      \node [style=none] (21) at (11, 0) {};
      \node [style=none] (22) at (12, 0) {};
      \node [style=none] (23) at (13, 0) {};
      \node [style=none] (24) at (13, 1) {};
      \node [style=none] (25) at (12, 1) {};
      \node [style=none] (26) at (17, 0) {};
      \node [style=none] (27) at (18, 1) {};
      \node [style=none] (28) at (18, 0) {};
      \node [style=none] (29) at (11, 5) {};
      \node [style=none] (30) at (12, 5) {};
      \node [style=none] (31) at (12, 6) {};
      \node [style=none] (32) at (11, 6) {};
      \node [style=none] (33) at (12, 7) {};
      \node [style=none] (34) at (13, 7) {};
      \node [style=none] (35) at (13, 6) {};
      \node [style=none] (36) at (17, 5) {};
      \node [style=none] (37) at (18, 5) {};
      \node [style=none] (38) at (18, 6) {};
      \node [style=none] (39) at (17, 6) {};
      \node [style=none] (40) at (18, 7) {};
      \node [style=none] (41) at (0, 11) {};
      \node [style=none] (42) at (1, 12) {};
      \node [style=none] (43) at (1, 13) {};
      \node [style=none] (44) at (0, 13) {};
      \node [style=none] (45) at (0, 12) {};
      \node [style=none] (46) at (0, 17) {};
      \node [style=none] (47) at (0, 18) {};
      \node [style=none] (48) at (1, 18) {};
      \node [style=none] (49) at (6, 11) {};
      \node [style=none] (50) at (5, 11) {};
      \node [style=none] (51) at (5, 12) {};
      \node [style=none] (52) at (6, 12) {};
      \node [style=none] (53) at (6, 13) {};
      \node [style=none] (54) at (7, 13) {};
      \node [style=none] (55) at (7, 12) {};
      \node [style=none] (56) at (11, 11) {};
      \node [style=none] (57) at (12, 11) {};
      \node [style=none] (58) at (12, 12) {};
      \node [style=none] (59) at (11, 12) {};
      \node [style=none] (60) at (13, 12) {};
      \node [style=none] (61) at (13, 13) {};
      \node [style=none] (62) at (12, 13) {};
      \node [style=none] (63) at (17, 11) {};
      \node [style=none] (64) at (18, 11) {};
      \node [style=none] (65) at (18, 12) {};
      \node [style=none] (66) at (17, 12) {};
      \node [style=none] (67) at (18, 13) {};
      \node [style=none] (68) at (5, 18) {};
      \node [style=none] (69) at (5, 17) {};
      \node [style=none] (70) at (6, 17) {};
      \node [style=none] (71) at (6, 18) {};
      \node [style=none] (72) at (7, 18) {};
      \node [style=none] (73) at (11, 17) {};
      \node [style=none] (74) at (11, 18) {};
      \node [style=none] (75) at (12, 18) {};
      \node [style=none] (76) at (12, 17) {};
      \node [style=none] (77) at (13, 18) {};
      \node [style=none] (78) at (17, 18) {};
      \node [style=none] (79) at (17, 17) {};
      \node [style=none] (80) at (18, 17) {};
      \node [style=none] (81) at (18, 18) {};
    \end{pgfonlayer}
    \begin{pgfonlayer}{edgelayer}
      \draw[thick] (3.center) to (2.center);
      \draw[thick] (2.center) to (14.center);
      \draw[thick] (14.center) to (18.center);
      \draw[thick] (18.center) to (6.center);
      \draw[thick] (6.center) to (4.center);
      \draw[thick] (4.center) to (3.center);
      \draw[thick] (2.center) to (1.center);
      \draw[thick] (1.center) to (9.center);
      \draw[thick] (9.center) to (13.center);
      \draw[thick] (13.center) to (15.center);
      \draw[thick] (15.center) to (14.center);
      \draw[thick] (13.center) to (12.center);
      \draw[thick] (12.center) to (29.center);
      \draw[thick] (29.center) to (32.center);
      \draw[thick] (32.center) to (16.center);
      \draw[thick] (16.center) to (15.center);
      \draw[thick] (12.center) to (11.center);
      \draw[thick] (11.center) to (21.center);
      \draw[thick] (21.center) to (25.center);
      \draw[thick] (25.center) to (30.center);
      \draw[thick] (30.center) to (29.center);
      \draw[thick] (25.center) to (24.center);
      \draw[thick] (24.center) to (36.center);
      \draw[thick] (36.center) to (39.center);
      \draw[thick] (39.center) to (35.center);
      \draw[thick] (35.center) to (30.center);
      \draw[thick] (24.center) to (23.center);
      \draw[thick] (23.center) to (26.center);
      \draw[thick] (26.center) to (27.center);
      \draw[thick] (27.center) to (37.center);
      \draw[thick] (37.center) to (36.center);
      \draw[thick] (8.center) to (7.center);
      \draw[thick] (7.center) to (50.center);
      \draw[thick] (50.center) to (51.center);
      \draw[thick] (51.center) to (42.center);
      \draw[thick] (42.center) to (41.center);
      \draw[thick] (41.center) to (8.center);
      \draw[thick] (7.center) to (6.center);
      \draw[thick] (18.center) to (19.center);
      \draw[thick] (19.center) to (49.center);
      \draw[thick] (49.center) to (50.center);
      \draw[thick] (49.center) to (55.center);
      \draw[thick] (55.center) to (59.center);
      \draw[thick] (59.center) to (56.center);
      \draw[thick] (56.center) to (20.center);
      \draw[thick] (20.center) to (19.center);
      \draw[thick] (20.center) to (16.center);
      \draw[thick] (32.center) to (33.center);
      \draw[thick] (33.center) to (57.center);
      \draw[thick] (57.center) to (56.center);
      \draw[thick] (33.center) to (34.center);
      \draw[thick] (34.center) to (63.center);
      \draw[thick] (63.center) to (66.center);
      \draw[thick] (66.center) to (60.center);
      \draw[thick] (60.center) to (57.center);
      \draw[thick] (34.center) to (35.center);
      \draw[thick] (39.center) to (40.center);
      \draw[thick] (40.center) to (64.center);
      \draw[thick] (64.center) to (63.center);
      \draw[thick] (44.center) to (43.center);
      \draw[thick] (43.center) to (69.center);
      \draw[thick] (69.center) to (68.center);
      \draw[thick] (68.center) to (48.center);
      \draw[thick] (48.center) to (46.center);
      \draw[thick] (46.center) to (44.center);
      \draw[thick] (43.center) to (42.center);
      \draw[thick] (51.center) to (53.center);
      \draw[thick] (53.center) to (70.center);
      \draw[thick] (70.center) to (69.center);
      \draw[thick] (53.center) to (54.center);
      \draw[thick] (54.center) to (73.center);
      \draw[thick] (73.center) to (74.center);
      \draw[thick] (74.center) to (72.center);
      \draw[thick] (72.center) to (70.center);
      \draw[thick] (54.center) to (55.center);
      \draw[thick] (59.center) to (62.center);
      \draw[thick] (62.center) to (76.center);
      \draw[thick] (76.center) to (73.center);
      \draw[thick] (62.center) to (61.center);
      \draw[thick] (61.center) to (79.center);
      \draw[thick] (79.center) to (78.center);
      \draw[thick] (78.center) to (77.center);
      \draw[thick] (77.center) to (76.center);
      \draw[thick] (61.center) to (60.center);
      \draw[thick] (66.center) to (67.center);
      \draw[thick] (67.center) to (80.center);
      \draw[thick] (80.center) to (79.center);
    \end{pgfonlayer}
  \end{tikzpicture}
  \caption{If we glue together blown up triangles (see Figure~\ref{fig:blowuptri}) along shared edges, ``holes'' appear at the vertices.} \label{fig:holes}
\end{figure}
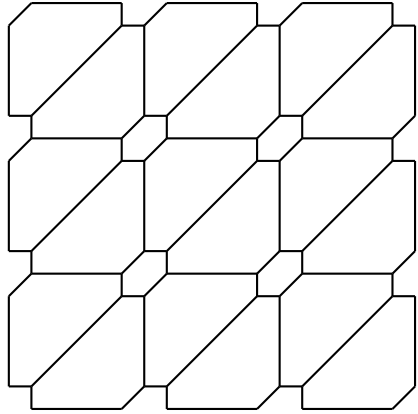

While so far, the identifications appear ad hoc, they do fit into a general framework. Around each vertex $P$, we have a topological circle $S^1$, and we insisted that integrals over it be zero. But, around each edge $E$, we have a topological zero-sphere $S^0$, a two-point set, where the two points have opposite sign. So, our identification for scalar fields can be equivalently viewed as insisting that the integral of the scalar field over this two-point set is zero. Our identification for one-forms at edges can also be viewed this way, but since we are integrating a one-form over a zero-dimensional set, we must view it in the context of Fubini's theorem, where the result of integration is itself a one-form, which we insist must be zero. Of course, going through the definitions, this condition just yields the usual requirement that the restrictions to the edge from both sides are equal.

\subsubsection{General framework for continuity conditions}
These observations suggest the following generalization. For each $d$-dimensional face $K$ with $d<n$, we obtain an $(n-d-1)$-dimensional sphere going around it, which we can denote $S_K$. Note that, within each element $T$, we can give coordinates to the portion of the sphere in $T$ by using $\lambda_i^{K^c}$, where $K^c$ is the face opposite to $K$, that is $V_{K^c}$ is the complement of $V_K$ in $V_T$. Thus, in the language of simplicial complexes, this $(n-d-1)$-dimensional sphere $S_K$ is the link of $K$, denoted $\Lk(K)$. Notably, the fact that the link is topologically a sphere is an explicit condition in the definition of piecewise linear (PL) manifolds; subtle counterexamples arise if this condition is not met.

In any case, if an element $T$ contains the face $K$, then considering the two-subset flag where $V_0=K$ and $V_1=K^c$, we have shown in Proposition~\ref{prop:faces} that $\tl T$ has an $(n-1)$-dimensional boundary face isomorphic to $\tl K\times\tl K^c$. We can call the disjoint union of these $\tl K\times\tl S_K$, because $\tl S_K$ is the blow-up of $S_K$. Of course, for the purposes of integration, we can ignore measure zero sets, so for these purposes we can think of this ``neighborhood'' of $K$ as $K\times S_K$. 

So, then, to construct the global complex of blow-up Whitney forms on a triangulation, we start with the discontinuous space by taking the direct sum over the elements of the triangulation of the complexes on each element, and then we impose the following continuity constraints. For each $d$-dimensional face $K$ with $d<n$, we can consider $\tl K\times \tl S_K$. Integrating a $k$-form over $\tl S_K$, we obtain a $k-(n-d-1)$-form on $\tl K$, and we insist that this $(k+d+1-n)$-form on $\tl K$ be zero. Explicitly, let $\psi$ be a discontinuous $k$-form, that is, an element of the direct sum of the blow-up complexes on each element. On each element $T$ containing $K$, we look at the boundary component of $\tl T$ that is isomorphic to $\tl K\times\tl K^c$. Assuming an orientation of the manifold and hence $T$ and $\tl T$, we have the boundary orientation of $\tl K\times\tl K^c$; assuming additionally a globally defined orientation of $K$, we obtain an orientation of $\tl K^c$ as well. We integrate $\psi$ over $\tl K^c$, obtaining a $(k+d+1-n)$-form on $\tl K$. Then we sum over all $T$ containing $K$. Our continuity condition is that the sum is zero.

\subsubsection{Continuity conditions in terms of degrees of freedom}
We expect that the above continuity condition is also what we will need for the higher-order generalizations, but for the blow-up Whitney forms, we can be more explicit in terms of the degrees of freedom. Fix a face $K$ of the triangulation of dimension $d<n$, and fix a flag $F$ on $V_K$ with $n-k$ subsets (one fewer than usual). Note that this requires that $n-k\le d+1$. For each element $T$ containing $K$, construct a flag $F_T$ by appending $V_T\setminus V_K$ to the flag $F$; this flag now has our usual $n-k+1$ subsets. Note that $V_T\setminus V_K$ is the vertex set of what we called $K^c$ above, and note that this operation on flags is the reverse of the dimension-reduction operation in Proposition~\ref{prop:reducedim}.

So then, with appropriate signs/orientations, we insist that the sum over $T$ of the degrees of freedom $\Psi_{F_T}$ is zero. If, as above, we hold fixed the orientations of all faces of the triangulation, then we can take the sign to be the sign of the ordering of $V_T$ given by a positive ordering of $V_K$ followed by a positive ordering of $V_T\setminus V_K$. We then impose this condition for every face $K$ of the triangulation and every flag $F$ on $V_K$.

We will now give explicit examples for how to implement these continuity conditions, matching the informal discussion of \ref{subsubsec:informal}.

\begin{example}[Scalar fields]
  Recall that the continuity conditions are vacuous unless $n-k\le d+1$. For scalar fields, $k=0$, so we only get continuity conditions for faces $K$ of dimension $d=n-1$. So then $S_K$ is the $0$-sphere, and so, as in the two-dimensional discussion above, the continuity condition is equivalent to simply identifying the degrees of freedom for matching flags $K_0<K_1<\dotsb<K_{n-1}=K<T$ and $K_0<K_1<\dotsb<K_{n-1}=K<T'$ of the two adjacent elements $T$ and $T'$ on either side of hyperface $K$. In summary, we have
  \begin{itemize}
  \item One degree of freedom per incidence $K_0<K_1<\dotsb<K_{n-1}$ in the triangulation, where each $K_j$ has dimension $j$.
  \end{itemize}
\end{example}

\begin{example}[Two dimensions]
  For the purposes of this discussion, we will use the letters $P$, $E$, and $T$ to refer to faces of dimension $0$, $1$, and $2$, respectively, of the original global triangulation. As discussed above, for scalar fields, we have one degree of freedom per incidence $P<E$.

  Moving on to one-forms, we have $k=1$, so we get continuity conditions for flags on $K$ with $n-k=1$ subset. So, the flag is determined by $K$, and there are two distinct possibilities, one where $K=E$, and one where $K=P$.

  The case $K=E$ corresponds to the long edges in Figure~\ref{fig:holes}. So then $S_K$ is the $0$-sphere, and the continuity condition amounts to identifying degrees of freedom $E<T$ and $E<T'$ for the two elements on either side of $E$.

  The more interesting case $K=P$ corresponds to the short edges around a hole in Figure~\ref{fig:holes}. So then $S_K$ is the $1$-sphere, informally, the circle around the hole. Each of these edges is given by $P<T$ where $T$ is an element containing the vertex $P$, and the condition is that the sum of the degrees of freedom on each of these edges is zero, that is, $\sum_{T>P}\Psi_{P<T}=0$. As a result of this condition, for the global complex, we have \emph{one less} degree of freedom than the number of edges around a vertex. In terms of implementation, to each vertex $P$, assign a \emph{base element} $T_P$. We then have a degree of freedom per every incidence $P<T$ where $T\neq T_P$, and then we compute $\Psi_{P<T_P}$ using the condition $\sum_{T>P}\Psi_{P<T}=0$.

  To summarize:
  \begin{itemize}
  \item For zero-forms, one degree of freedom per incidence of the form $P<E$.
  \item For one-forms:
    \begin{itemize}
    \item One degree of freedom per edge $E$, and
    \item One degree of freedom per incidence $P<T$ for $T\neq T_P$, where
      \begin{itemize}
      \item for each vertex $P$, $T_P$ is an arbitrarily assigned \emph{base element} containing $P$, and
      \item when computing the shape function on $T_P$, we set
        \begin{equation*}
          \Psi_{P<T_P}=-\sum_{\substack{T>P\\T\neq T_P}}\Psi_{P<T}.
        \end{equation*}
      \end{itemize}
    \end{itemize}
  \item For two-forms, one degree of freedom per element $T$.
  \end{itemize}
\end{example}

\begin{example}[Three dimensions]
  For the purposes of this discussion, we will use the letters $P$, $E$, $F$, and $T$, to refer to faces of dimension $0$, $1$, $2$, and $3$, respectively, of the original global triangulation, so, for scalar fields, we have one degree of freedom per incidence $P<E<F$.

  There are now three types of one-form degrees of freedom, corresponding to the three types of edges in the blow-up tetrahedron. Referring to Figure~\ref{fig:blowuptet}, let us refer to the three types of edges in the blow-up of a tetrahedron as big edges, small edges, and tiny edges. We now have $n-k=2$, so we are looking at flags on $K$ with $2$ subsets.

  The big edges correspond to flags of the form $E<F=K$. In this case, $S_K$ is the $0$-sphere, and the continuity condition amounts to identifying degrees of freedom $E<F<T$ and $E<F<T'$ for the two elements on either side of $F$. The small edges are similar. They correspond to flags of the form $P<F=K$, and once again we identify degrees of freedom $P<F<T$ and $P<F<T'$.

  The tiny edges are more interesting. They correspond to flags of the form $P<E=K$, so now $S_K$ is the $1$-sphere, and the condition is that $\sum_{T>E}\Psi_{P<E<T}=0$, summing over all elements containing the edge $E$. Informally, if we perform the gluing illustrated in Figure~\ref{fig:holes} in three dimensions using blow-up tetrahedra (Figure~\ref{fig:blowuptet}), we will have cylindrical holes around each edge. Each end of this cylinder is a loop, and we require that degrees of freedom on tiny edges sum to zero around each loop. As above, in terms of implementation, we assign a base element $T_E$ to each edge $E$, and then we leave out the degrees of freedom of the form $P<E<T_E$, instead computing $\Psi_{P<E<T_E}$ from the continuity condition.

  Finally, let us consider two-forms. There are three types, corresponding to the three kinds of faces in Figure~\ref{fig:blowuptet}, which we will refer to as big hexagons, rectangles, and small hexagons. Here $n-k=1$, so the flags are the single face $K$.

  The big hexagons correspond to $K=F$, and there we simply identify degrees of freedom $F<T$ and $F<T'$ correpsonding to the two elements on either side of $F$.

  The rectangles correspond to $K=E$, and the condition is $\sum_{T>E}\Psi_{E<T}=0$, summing over all elements containing the edge $E$. In terms of the cylindrical holes, this condition states that the degrees of freedom of the rectangular faces that make up each cylinder sum to zero. We can implement the global degrees of freedom by leaving out $E<T_E$ as before.

  The small hexagons correspond to $K=P$. Now $S_K$ is a $2$-sphere. Informally, if we collapse the cylindrical holes discussed above, we still are left with spherical holes around each vertex. As a result of this collapse, the small hexagons become small triangles, and these small triangles make up these spheres around each vertex. The condition that $\sum_{T>P}\Psi_{P<T}=0$ states that, for each sphere, the degrees of freedom on these small hexagons/triangles sum to zero.

  In summary:
  \begin{itemize}
  \item For zero-forms, one degree of freedom per incidence $P<E<F$.
  \item For one-forms:
    \begin{itemize}
    \item One degree of freedom per incidence $E<F$,
    \item One degree of freedom per incidence $P<F$, and
    \item One degree of freedom per incidence $P<E<T$ for $T\neq T_E$, where
      \begin{itemize}
      \item for each edge $E$, $T_E$ is an arbitrarily assigned base element containing $E$, and
      \item when computing the shape function on $T_E$, we set
        \begin{equation*}
          \Psi_{P<E<T_E}=-\sum_{\substack{T>E\\T\neq T_E}}\Psi_{P<E<T}.
        \end{equation*}
      \end{itemize}
    \end{itemize}
  \item For two-forms:
    \begin{itemize}
    \item One degree of freedom per face $F$,
    \item One degree of freedom per incidence $E<T$ for $T\neq T_E$, where
      \begin{itemize}
      \item as before, for each edge $E$, $T_E$ is an arbitrarily assigned base element containing $E$, and
      \item when computing the shape function on $T_E$, we set
        \begin{equation*}
          \Psi_{E<T_E}=-\sum_{\substack{T>E\\T\neq T_E}}\Psi_{E<T}.
        \end{equation*}
      \end{itemize}
    \item One degree of freedom per incidence $P<T$ for $T\neq T_P$, where
      \begin{itemize}
      \item for each vertex $P$, $T_P$ is an arbitrarily assigned base element containing $P$, and
      \item when computing the shape function on $T_P$, we set
        \begin{equation*}
          \Psi_{P<T_P}=-\sum_{\substack{T>P\\T\neq T_P}}\Psi_{P<T}.
        \end{equation*}
      \end{itemize}
    \end{itemize}
  \item For three-forms, one degree of freedom per element $T$.
  \end{itemize}
\end{example}

\begin{remark}
  In the examples above, it appears that we do something different when $K$ has codimension one, since there is no need to choose a base element. But we could choose a base element if we wanted to. For example, for two-forms in three dimensions with face $F$ between elements $T$ and $T'$, we can choose $T'$ to be the base element. Since $T$ is the only non-base element containing $F$, there is just one degree of freedom, which is assigned to the incidence $F<T$. Then we set $\Psi_{F<T'}=-\Psi_{F<T}$. The sign is appropriate because $F$ is positively oriented with respect to one of $T$ and $T'$ and negatively oriented with respect to the other. We can see that the end result is equivalent to simply assigning a degree of freedom to $F$, as before.
\end{remark}

\begin{conjecture}
  Consider a PL manifold with a given triangulation. With the aforementioned continuity conditions, the space of blow-up Whitney forms on this triangulation is a differential complex whose cohomology matches the simplicial cohomology of the manifold.
\end{conjecture}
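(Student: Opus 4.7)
The plan is to extend the isomorphism from Section~\ref{sec:cohomology} (the single-simplex case) to the global setting by constructing a cell complex $\tilde{\mathcal T}$ from the individual blow-ups $\tilde T$, showing $\tilde{\mathcal T} \simeq M$, and then matching the cellular cochain complex of $\tilde{\mathcal T}$ with the complex of blow-up Whitney forms under the specified continuity conditions.

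First, I would construct $\tilde{\mathcal T}$ in two stages. Starting with $\bigsqcup_{T} \tilde T$, for each codimension-$1$ face $K = T \cap T'$ of $\mathcal T$, identify the boundary component of $\tilde T$ corresponding to the flag ending in $V_{T \setminus K}$ with the corresponding boundary component of $\tilde{T'}$; both are canonically isomorphic to $\tilde K$. Call this intermediate space $\tilde{\mathcal T}_0$. For each face $K$ of codimension $c \geq 2$, the remaining unglued boundary components of the $\tilde T$ with $T > K$ assemble into a copy of $\tilde K \times \Lk(K)$, where $\Lk(K) \cong S^{c-1}$ by the PL manifold hypothesis. Fill each such boundary with a product $\tilde K \times D^c$, a cone on the link. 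Since each fill is contractible in the link direction, the result $\tilde{\mathcal T}$ is homotopy equivalent to $M$, so its cellular cohomology equals the simplicial cohomology of $M$.

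Next, I would define a chain map $\Pi \colon V^{\bullet} \to C^{\bullet}(\tilde{\mathcal T})$ from the global complex of blow-up Whitney forms (with continuity conditions) to the cellular cochain complex by setting $\Pi(\psi)(\sigma) = \int_{\sigma} \psi$ on cells inherited from the $\tilde T$, and $\Pi(\psi)(\sigma) = 0$ on the fill cells. Verifying the chain-map property on non-fill cells uses Stokes' theorem together with Proposition~\ref{prop:dof}. On a fill cell $\sigma_{K,F}$ whose boundary is $\sum_{T > K} \pm F_T$, the coboundary evaluates to $\Pi(d\psi)(\sigma_{K,F}) = \sum_{T > K} \pm \Psi_{F_T}(\psi)$, which vanishes precisely by the continuity condition imposed on $V^{\bullet}$.

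The hardest part will be showing that $\Pi$ is a quasi-isomorphism. Since $\Pi$ is not a bijection (fill cells contribute generators to $C^{\bullet}(\tilde{\mathcal T})$ that have no counterpart in $V^{\bullet}$), a direct dimension-count argument is unavailable. I would instead filter both complexes by the codimension of the underlying face of $\mathcal T$ and compare the associated spectral sequences. At each face $K$, the local isomorphism from the single-simplex case (Proposition~\ref{prop:dof} and the surrounding results), combined with the local structure $\tilde K \times \Lk(K)$ coming from the fillings, should yield matching $E_1$-pages; the spectral sequences then agree from $E_2$ onward, and combined with $\tilde{\mathcal T} \simeq M$ this identifies the cohomology of the blow-up Whitney complex with the simplicial cohomology of $M$. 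The delicate points will be verifying all sign conventions so that the boundaries of the fill cells really encode the continuity conditions, and ruling out pathologies in the spectral sequences (which should be finite and collapse early given the strongly constrained local structure).
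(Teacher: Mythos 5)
The statement you are trying to prove is stated in the paper as a \emph{conjecture}: the authors explicitly say that answering this question is beyond the scope of the paper, so there is no proof to compare against. What you have written is in fact very close to the strategy the authors informally sketch (glue the blow-ups $\tl T$ along codimension-one faces, observe that ``holes'' of the form $\tl K\times\Lk(K)$ appear around each face $K$ of codimension at least two, and use the continuity conditions to kill the spurious cohomology of those holes). Your observation that the continuity condition at $(K,F)$ is exactly the statement that $\delta(\Pi\psi)$ vanishes on the new top cells of the fillings $\tl K\times D^{n-d}$ is correct and is the right way to see both that $\Pi$ is a chain map and that $d$ preserves the continuity conditions (the latter then follows from $\delta^2=0$). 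So as a proposal this is sensible; but it is not a proof, and two steps carry essentially all of the remaining content.

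First, the homotopy equivalence $\tl{\mathcal T}\simeq M$ is asserted, not proved. You need to verify that the attaching map of each filling $\tl K\times D^{n-d}$ actually lands in the space already built: the sphere $\tl S_K$ is assembled from the pieces $\tl{K^c_T}$ over all $T>K$, and these are only glued to one another through the codimension-one identifications, so one must check that this produces a genuine CW structure on a sphere (here the PL hypothesis that $\Lk(K)\cong S^{n-d-1}$ is essential), that fillings at different faces are compatible, and that the resulting space collapses to $M$. Second, and more seriously, the quasi-isomorphism step is where your argument stops being an argument: ``should yield matching $E_1$-pages'' is not a computation, and you have not identified what the local contribution of a filling is (note that $H^\bullet(D^{n-d},S^{n-d-1})$ is \emph{not} zero, so the naive ``the fills are contractible'' intuition does not by itself explain why the extra cells contribute nothing). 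A cleaner route than spectral sequences is available: $\Pi$ is injective by elementwise unisolvence, and its image is cut out inside $C^\bullet(\tl{\mathcal T})$ by the vanishing on fill cells together with the continuity functionals; since each continuity functional at $(K,F)$ and each fill-cell evaluation involves a set of cells disjoint from those of every other such functional, the quotient complex $C^\bullet/\Pi(V^\bullet)$ decomposes as a direct sum of two-term complexes, one for each fill cell, and you should prove directly that each of these is acyclic (this is where the signs and orientations you defer must actually be checked). Only after both steps are carried out does the long exact sequence give $H^\bullet(V^\bullet)\cong H^\bullet(\tl{\mathcal T})\cong H^\bullet(M)$. As it stands, your proposal is a plausible plan for attacking an open conjecture, not a proof of it.
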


\subsection{Towards higher order complexes}\label{subsec:higherorder}
In finite element exterior calculus \cite{arnold2006finite}, the Whitney form complex $\cP_1^-\Lambda^k$ generalizes to complexes of differential forms with coefficients that are higher degree polynomials, namely the $\cP_r^-\Lambda^k$ and $\cP_{r-k}\Lambda^k$ complexes. Like the complex of Whitney forms, these complexes have the right cohomology. Also like Whitney forms, the degrees of freedom, that is, the basis elements of the dual space, are associated to faces of the triangulation. However, for these higher order spaces, there are generally multiple degrees of freedom per face, made explicit via a duality relationship between the $\cP$ and $\cP^-$ spaces.

We would like to similarly generalize the $b\cP_1^-$ complex of blow-up Whitney forms to higher degree. As discussed in the introduction, we expect that our two new perspectives, namely the Poisson process ensemble and the blow-up simplex $\tl T$, to be key. As with blow-up Whitney forms, we expect that the basis functions of these higher order spaces to be expressed as probabilities of sequences of arrival times. Likewise, we expect that the degrees of freedom for blow-up finite element exterior calculus will be associated to faces of the blow-up simplex $\tl T$, in contrast to ordinary finite element exterior calculus, where they are associated to faces of $T$. We expect that these degrees of freedom will be glued together as speculated in the previous section.

As a first step towards this goal, we focus on the scalar case $k=0$ and propose a candidate for $b\cP_r\Lambda^0$, the space of blow-up scalar fields of ``degree'' $r$, which specializes to the shadow form scalar space $b\cP_1\Lambda^0=b\cP_1^-\Lambda^0$ of this paper when $r=1$.

\begin{definition}
  As before, let $V$ be a set, and consider a collection of $\abs V$ Poisson particle sources, each with rate $\lambda_i$. A \emph{degree $r$ arrival experiment} is performed as follows.

  Receive the first $r$ particles, and record the number of particles received from each Poisson source. Then, silence/ignore the sources from which at least one particle was received. Then receive another $r$ particles. Repeat until all sources have been silenced.

  We call a possible outcome of such an experiment an \emph{arrival sequence}.
\end{definition}

\begin{definition}
  The probability of a particular arrival sequence is a function of the $\lambda_i$. We let $b\cP_r\Lambda^0$ be the span of these functions.
\end{definition}

\begin{conjecture}
  These probability functions are linearly independent and hence form a basis for $b\cP_r\Lambda^0$.
\end{conjecture}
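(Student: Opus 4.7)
The plan is to prove the stronger statement of unisolvence—that there exists a dual basis of degrees of freedom indexed by arrival sequences—which immediately yields linear independence. The approach generalizes the paper's unisolvence proof for the $r=1$ case (Definition~\ref{def:dof} and the theorem that $\Psi_F$ is dual to $\psi_F$).

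First, I would extend the iterated limit $\Lambda_F$ of Definition~\ref{def:dof} to the higher-order setting. Given a flag $F=(V_1,\dotsc,V_m)$, $\Lambda_F$ is the pointwise iterated limit that collapses the later $\rho_j$'s to zero relative to earlier ones while preserving the $\theta$-coordinates within each $V_j$. A direct computation from the batch-probability formula shows that if an arrival sequence $\mathbf{n}$ has flag $F$ and batch compositions $(\alpha^{(j)})$, and writing $P_{\mathbf{n}}$ for its probability function, then
\[
\Lambda_F(P_{\mathbf{n}}) = \prod_j \binom{r}{\alpha^{(j)}} \prod_{i \in V_j} \theta_i^{n_i^{(j)}},
\]
a product of degree-$r$ Bernstein polynomials on $\prod_j T_{V_j}$: in the limit each $\rho_j$ reduces to $\sum_{i \in V_j}\lambda_i$ so $\lambda_i/\rho_j$ tends to $\theta_i$ for $i \in V_j$.

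Second, I would analyze the cross-interactions $\Lambda_F(P_{\mathbf{n}'})$ for arrival sequences $\mathbf{n}'$ with flag $F'\neq F$. When the ordered partitions of $F'$ and $F$ are incompatible (for instance, when neither refines the other), $\Lambda_F(P_{\mathbf{n}'})=0$ by the same rate-collapse mechanism as in the paper's existing unisolvence argument. When $F'$ properly refines $F$, however, $\Lambda_F(P_{\mathbf{n}'})$ is generally not polynomial: it is a rational function on $\prod_j T_{V_j}$ carrying the inter-sub-batch probabilities within each $V_j$ as extra factors of ratios of sub-sums of the $\theta_i$. To disentangle this, I would iterate, applying sub-limits corresponding to the refinement $F'/F$ to further simplify and isolate the contribution of each arrival sequence. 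I would then define the degree of freedom $\Psi_{\mathbf{n}}$ by composing $\Lambda_{F_{\mathbf{n}}}$ with the dual Bernstein-basis functional on $\prod_j T_{V_j}$ corresponding to the composition data $(\alpha^{(j)})$ of $\mathbf{n}$, extended appropriately (for example by a secondary limit) to handle the non-polynomial remainders.

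Linear independence would then follow by induction on the refinement partial order of flags: suppose $\sum c_{\mathbf{n}}P_{\mathbf{n}}=0$, apply $\Psi_{\mathbf{n}_0}$ for an arrival sequence $\mathbf{n}_0$ whose flag is coarsest among those with $c_{\mathbf{n}_0}\neq 0$, and conclude $c_{\mathbf{n}_0}=0$, contradicting the choice. The main obstacle is precisely the non-polynomial behavior described above. Unlike the $r=1$ case, where each limit produces either zero or a single Bernstein scalar, for $r>1$ the limits produce rational functions encoding refined sub-batch probabilities, and the naive pairing $(\Psi_{\mathbf{n}},P_{\mathbf{n}'})$ is not cleanly $\delta_{\mathbf{n},\mathbf{n}'}$. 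Organizing the cascade of iterated sub-limits so as to realize the dual pairing as at least triangular with respect to the refinement poset is the core technical challenge; presumably this is why the statement is advanced as a conjecture rather than a theorem.
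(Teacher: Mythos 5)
The paper does not prove this statement at all---it is advanced as a conjecture---so there is no proof of record to compare against; what matters is whether your argument closes the gap, and it does not. The parts you compute are correct: the batch structure gives $P_{\mathbf n}=\prod_j\binom{r}{\alpha^{(j)}}\rho_{\ge j}^{-r}\prod_{i\in V_j}\lambda_i^{n_i^{(j)}}$ with $\rho_{\ge j}$ the sum of the not-yet-silenced rates, the iterated limit $\Lambda_F$ sends $P_{\mathbf n}$ (for $\mathbf n$ with flag $F$) to the product of Bernstein monomials $\prod_j\binom{r}{\alpha^{(j)}}\prod_{i\in V_j}\theta_i^{n_i^{(j)}}$, and $\Lambda_F(P_{\mathbf n'})=0$ whenever the flag $F'$ of $\mathbf n'$ does not refine $F$ as an ordered partition. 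But the step you flag as the ``core technical challenge'' is a genuine gap, not a detail: when $F'$ strictly refines $F$, the limit $\Lambda_F(P_{\mathbf n'})$ is a rational function on $\prod_j T_{V_j}$ whose denominators involve proper partial sums $1-\sum_{i\in W}\theta_i$, so it lies outside the Bernstein span; the ``dual Bernstein functional'' is therefore not defined on it, the ``secondary limits'' meant to repair this are named but never constructed, and the asserted triangularity of the pairing is essentially a restatement of the conjecture. As written, the inductive step does not close, because choosing $\mathbf n_0$ with \emph{coarsest} flag among the nonzero coefficients is exactly the choice that forces you to evaluate $\Psi_{\mathbf n_0}$ on the problematic refining cross-terms.

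There is, however, a repair suggested by your own vanishing computation: run the induction from the \emph{finest} flags downward rather than from the coarsest upward. Suppose $\sum_{\mathbf n}c_{\mathbf n}P_{\mathbf n}=0$ and let $F$ be maximal in the refinement order among the flags of sequences with $c_{\mathbf n}\neq 0$. Since $\Lambda_F(P_{\mathbf n'})=0$ unless $F'$ refines $F$, and no $F'$ occurring with nonzero coefficient strictly refines $F$, applying $\Lambda_F$ leaves only the sequences with flag exactly $F$, whose limits are distinct Bernstein monomials $\prod_{j,i}\theta_i^{n_i^{(j)}}$ (distinct because the composition data determines the exponent vector and the $V_j$ are disjoint) and hence linearly independent on $\mathring\Theta_F$; this forces those $c_{\mathbf n}$ to vanish, a contradiction. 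With this reversal the rational cross-terms never need to be evaluated, and the whole argument reduces to a careful proof of the vanishing lemma, which is an asymptotic computation of the same character as the paper's $r=1$ unisolvence estimate~\eqref{eq:limfracrho}. That lemma still has to be written down rigorously (in particular, the batch-by-batch asymptotics of the normalizers $\lambda_{V\setminus(V'_1\cup\dotsb\cup V'_{j'-1})}$ under $\Lambda_F$, pointwise on the interior of $\Theta_F$), so your proposal is best read as a plausible program with one essential lemma missing and one structural choice that should be inverted.
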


\begin{definition}
  To each arrival sequence, we associate a flag by recording the sets of sources silenced at each step.
\end{definition}

The example of $b\cP_3\Lambda^0$ in two dimensions (with $V=\{0,1,2\}$) is presented in Table~\ref{tab:r3}. Similarly to $\cP_3\Lambda^0$, there is one degree of freedom per vertex, two per edge, and one per face. However, for $\cP_3\Lambda^0$, these are vertices, edges, and faces of $T$, whereas for $b\cP_3\Lambda^0$, these are vertices, edges, and faces of $\tl T$. As a result, there are $6+2\cdot6+1=19$ degrees of freedom total.

\begin{table}
  \begin{tabular}{c|c|c}
    flag / face of $\tl T$ & arrival sequence & probability / basis function\\
    \hline & & \\[-0.8em]
    $012$ & $000\mid111\mid222$ & $\displaystyle\left(\frac{\lambda_0^3}{\lambda_{012}^3}\right)\left(\frac{\lambda_1^3}{\lambda_{12}^3}\right)\left(\frac{\lambda_2^3}{\lambda_2^3}\right)$ \\[1em]
    $\{01\}2$ & $001\mid222$ & $\displaystyle3\left(\frac{\lambda_0^2\lambda_1}{\lambda_{012}^3}\right)\left(\frac{\lambda_2^3}{\lambda_2^3}\right)$ \\[1em]
    $\{01\}2$ & $011\mid222$ & $\displaystyle3\left(\frac{\lambda_0\lambda_1^2}{\lambda_{012}^3}\right)\left(\frac{\lambda_2^3}{\lambda_2^3}\right)$ \\[1em]
    $0\{12\}$ & $000\mid112$ & $\displaystyle3\left(\frac{\lambda_0^3}{\lambda_{012}^3}\right)\left(\frac{\lambda_1^2\lambda_2}{\lambda_{12}^3}\right)$ \\[1em]
    $0\{12\}$ & $000\mid122$ & $\displaystyle3\left(\frac{\lambda_0^3}{\lambda_{012}^3}\right)\left(\frac{\lambda_1\lambda_2^2}{\lambda_{12}^3}\right)$ \\[1em]
    $\{012\}$ & $012$     & $\displaystyle6\left(\frac{\lambda_0\lambda_1\lambda_2}{\lambda_{012}^3}\right)$ \\[1em]
  \end{tabular}
  \caption{A proposed space $b\cP_3\Lambda^0$ of blow-up scalars of degree $r=3$ in dimension $n=2$. We list the basis functions associated to four of the flags on $\{0,1,2\}$; the other flags can be obtained by permuting the indices. As before $\rr{12}$ and $\rr{012}$ are shorthand for $\lambda_1+\lambda_2$ and $\lambda_0+\lambda_1+\lambda_2$, respectively. We may set $\lambda_{012}=1$ if we want to interpret the Poisson rates $\lambda_i$ as barycentric coordinates associated with the vertices of the triangle. Note also that the last factor in the first three rows is equal to $1$, but we include it for completeness.}\label{tab:r3}
\end{table}

It is very easy to show that the proposed higher order space of blow-up scalar fields contains the usual polynomial scalar fields.

\begin{proposition}
  The higher-order blow-up space $b\cP_r\Lambda^0$ contains the usual space $\cP_r\Lambda^0$ of polynomial scalar fields.
\end{proposition}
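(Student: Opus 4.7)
The plan is to reduce to showing that every monomial $\lambda^a := \lambda_0^{a_0}\cdots\lambda_n^{a_n}$ with $\sum_{i\in V} a_i = r$ lies in $b\cP_r\Lambda^0$. Since $\cP_r\Lambda^0$ consists of polynomials of degree at most $r$, and each such polynomial can be homogenized to degree exactly $r$ by multiplying lower-degree terms by powers of $\rho := \sum_i\lambda_i = 1$, these homogeneous monomials span $\cP_r\Lambda^0$.

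Fix a tuple $a=(a_i)_{i\in V}$ of nonnegative integers with $\sum_i a_i = r$, and consider the event $E_a$ that the first block of the degree $r$ arrival experiment consists of exactly $a_i$ particles from source $i$ for every $i$. This event is the disjoint union of the arrival sequences whose first block has counts $a$: such sequences differ only in their later blocks (the sub-experiment run on the complementary set $\{i : a_i = 0\}$ of sources not silenced in the first round), so they are genuine distinct outcomes and together exhaust $E_a$. Therefore
\begin{equation*}
  P(E_a) \;=\; \sum_{\substack{\text{arrival sequences} \\ \text{starting with first block } a}} P(\text{sequence}),
\end{equation*}
which exhibits $P(E_a)$ as a linear combination of basis functions of $b\cP_r\Lambda^0$, so $P(E_a) \in b\cP_r\Lambda^0$.

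Next I would compute $P(E_a)$ directly using the standard fact about superposed Poisson processes: if we pool independent sources of rates $\lambda_i$ into one process of rate $\rho$, each arrival independently comes from source $i$ with probability $\lambda_i/\rho$. The first $r$ arrivals thus form an i.i.d.\ sample from this distribution on $V$, so the multinomial formula gives
\begin{equation*}
  P(E_a) \;=\; \binom{r}{a_0,\dotsc,a_n}\prod_{i\in V}\left(\frac{\lambda_i}{\rho}\right)^{a_i} \;=\; \binom{r}{a_0,\dotsc,a_n}\,\frac{\lambda^a}{\rho^r}.
\end{equation*}
Specializing to the barycentric normalization $\rho = 1$ and dividing by the nonzero multinomial coefficient yields $\lambda^a \in b\cP_r\Lambda^0$, which completes the proof.

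Since each step is essentially a routine probabilistic bookkeeping, there is no serious obstacle; the only point to be careful about is the partition-of-$E_a$ claim, namely that every arrival sequence beginning with the prescribed first block appears exactly once. This is immediate from the definition of an arrival sequence, since the full sequence is determined by its first-block data together with the outcome of the degree $r$ arrival sub-experiment on $\{i : a_i = 0\}$.
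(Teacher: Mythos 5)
Your proposal is correct and follows essentially the same route as the paper: both group the arrival sequences by the composition of the first block of $r$ particles, observe that the resulting sum of basis probabilities is the multinomial probability $\binom{r}{a_0,\dotsc,a_n}\prod_i(\lambda_i/\rho)^{a_i}$, and recognize these as (scalar multiples of) the Bernstein basis of $\cP_r\Lambda^0$. The only cosmetic difference is that you carry the factor $\rho^{-r}$ explicitly and normalize at the end, whereas the paper works with the homogeneous form directly.
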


\begin{proof}
  Group the arrival sequences according to the first set of $r$ particles received. The sum of the corresponding probabilities / basis functions is the probability that a particular set of $r$ particles was received first. For given integers $r_i$ summing to $r$, the probability that the first set of $r$ particles received had $r_i$ particles from each source $i$ is
  \begin{equation*}
    r!\prod_{i\in V}\frac1{r_i!}\lambda_i^{r_i}.
  \end{equation*}
  Up to a constant factor, these functions are precisely the standard monomial/Bernstein basis for the space $\cP_r\Lambda^0$ of polynomials of degree at most $r$ on the simplex $T$, or, equivalently, the space of homogeneous polynomials of degree $r$ on $\bR^V$.
\end{proof}

Also, we have a blow-up analogue of the geometric decomposition of \cite{arnold2009geometric}. Namely, the flag $F$ associated to an arrival sequence is the unique minimal face of $\tl T$ on which the associated probability does not vanish.

\begin{proposition}
  Consider an arrival sequence with associated probability $p$ and flag $F$. Consider a face of $\tl T$ with associated flag $F'$. Then $p$ vanishes on the face $F'$ unless the face $F$ is a (not necessarily proper) subface of $F'$.
\end{proposition}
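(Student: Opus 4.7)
The plan is to compute $p$ as an explicit product of blow-up coordinates on $\tl T$, use the description of the face corresponding to $F'$ from Proposition~\ref{prop:faces} to determine when each coordinate vanishes, and then read off the constraint this places on the flag $F$ of the arrival sequence. Write $F=(V_0,\ldots,V_{n-k})$, where $V_\ell$ is the set of sources silenced after batch $\ell$; set $S_\ell:=V\setminus(V_0\cup\cdots\cup V_{\ell-1})$ for the set of sources active during batch $\ell$; and let $m_\ell^i\in\mathbb Z_{\ge0}$ be the number of particles received from source $i$ in that batch, so that $V_\ell=\{i:m_\ell^i>0\}$ and $\sum_i m_\ell^i=r$. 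A direct Poisson-process computation in the style of Section~\ref{sec:poisson} (batches are independent once conditioned on the active set) gives
\[
  p \;=\; \prod_\ell \biggl( \frac{r!}{\prod_i m_\ell^i!} \prod_{i\in V_\ell} \Bigl( \frac{\lambda_i}{\lambda_{S_\ell}} \Bigr)^{m_\ell^i} \biggr),
\]
where $\lambda_{S_\ell}:=\sum_{i''\in S_\ell}\lambda_{i''}$. Crucially, each ratio $\lambda_i/\lambda_{S_\ell}$ is precisely the blow-up coordinate $\lambda_i^{K_\ell}$, where $K_\ell\le T$ is the face with vertex set $S_\ell$. Hence $p$ is a monomial in the blow-up coordinates and extends smoothly to $\tl T$.

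Next, restrict to the face of $\tl T$ associated to $F'=(V'_0,\ldots,V'_{n-k'})$. For each batch $\ell$, set $j_0(\ell):=\min\{j':S_\ell\cap V'_{j'}\ne\varnothing\}$. Applying the explicit description of this face from the proof of Proposition~\ref{prop:faces} (the face is isomorphic to $\prod_{j'}\tl T_{V'_{j'}}$, and the blow-up coordinate $\lambda_i^K$ for any $K\le T$ either reduces to a coordinate on $\tl T_{V'_{j_0}}$ or is forced to $0$), the restriction of $\lambda_i^{K_\ell}$ to the face equals a generically nonzero coordinate on the factor $\tl T_{V'_{j_0(\ell)}}$ when $i\in V'_{j_0(\ell)}$, and vanishes identically when $i$ lies in a strictly later block $V'_{j'}$ with $j'>j_0(\ell)$. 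Therefore $p$ vanishes identically on the face of $F'$ as soon as some batch $\ell$ contains a source $i\in V_\ell$ outside $V'_{j_0(\ell)}$.

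Assume then that $p$ does not vanish on the face, i.e., $V_\ell\subseteq V'_{j_0(\ell)}$ for every $\ell$. A simple induction on $\ell$ now shows that the consecutive blocks of $F$ cluster into runs whose cumulative unions are precisely $V'_0,\,V'_0\cup V'_1,\ldots,V$: so long as $V_0\cup\cdots\cup V_\ell$ is a proper subset of $V'_0$ we have $j_0(\ell+1)=0$ and the next block stays inside $V'_0$; once $V'_0$ is exhausted, $j_0$ advances to $1$, and so on. Thus $F$ refines $F'$, which by Proposition~\ref{prop:faces} is exactly the statement that the face of $F$ is a subface of the face of $F'$. The main technical obstacle is the step unpacked in the second paragraph: using the constraint equations defining $\tl T$ to verify that $\lambda_i^{K_\ell}$ genuinely vanishes on the face of $F'$ whenever $i$ is in a strictly later block of $F'$ than the earliest one meeting $S_\ell$. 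Once that is in hand, the rest is combinatorial bookkeeping about the order in which Poisson sources get silenced.
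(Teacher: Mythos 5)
Your proof is correct, and it rests on the same core idea as the paper's: interpret membership in the face of $F'$ through the Poisson rates, and deduce that a source in a later block of $F'$ cannot contribute a particle until all earlier-block sources are silenced. The execution, however, is genuinely different in a way worth noting. The paper argues heuristically, invoking the ``infinitesimal rates'' intuition ($\lambda_{i_1}\gg\lambda_{i_2}$ whenever $i_1<_{F'}i_2$) and concluding directly that $i_1<_{F'}i_2$ forces $i_1<_Fi_2$; it never exhibits $p$ as a function on $\tl T$. You instead compute $p$ explicitly as a product over batches of multinomial probabilities in the ratios $\lambda_i/\lambda_{S_\ell}$, recognize each ratio as the blow-up coordinate $\lambda_i^{K_\ell}$ with $K_\ell=T_{S_\ell}$, and then read off vanishing on the face of $F'$ from the coordinate description in Proposition~\ref{prop:faces} ($\lambda_i^{K}$ restricts to zero exactly when $i$ lies in a strictly later block of $F'$ than the earliest block meeting $V_K$). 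This buys two things the paper's argument does not deliver: it proves that $p$ extends smoothly (indeed as a monomial in blow-up coordinates) to $\tl T$, so that ``vanishes on the face'' is a precise statement rather than a limit heuristic, and it yields the converse for free --- if $V_\ell\subseteq V'_{j_0(\ell)}$ for every batch then every factor is a generically nonzero coordinate on the face, so $p$ does \emph{not} vanish there. Your closing induction showing that $V_\ell\subseteq V'_{j_0(\ell)}$ for all $\ell$ forces $F$ to refine $F'$ is the same combinatorial bookkeeping the paper compresses into the observation that $<_{F'}$ implies $<_F$. The one step you flag as the technical obstacle --- verifying the vanishing of $\lambda_i^{K_\ell}$ on the face --- is already done in the uniqueness part of the paper's proof of Proposition~\ref{prop:faces}, so nothing further is needed.
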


Note that we are abusing notation by interchangeably using $F$ for the face of $\tl T$ and for its associated flag. So, to be proper, per Proposition~\ref{prop:faces}, we should say that $p$ vanishes on the face unless the ordered partition $F$ is a subdivision of the ordered partition $F'$ (including the trivial subdivision where $F=F'$).

\begin{proof}
  For $i_1,i_2\in V$, we will say $i_1<_F i_2$ if $i_1\in V_{j_1}$, $i_2\in V_{j_2}$, and $j_1< j_2$. In other words, an ordered partition of $V$ defines a weak ordering on $V$, where vertices in subsets of the partition further to the left are smaller than vertices in subsets of the partition further to the right. We take similar notation for flag $F'$.

  We recall that, in terms of Poisson rates, the statement that we are on face $F'$ can be interpreted as saying that $\lambda_{i_1}\gg\lambda_{i_2}$ whenever $i_1<_{F'}i_2$, that is, rates later in the flag are infinitesimal relative to earlier rates; see the intuition discussion in the proof of Proposition~\ref{prop:faces}. If so, then we must silence/ignore source $i_1$ before we can receive a particle from source $i_2$, which means that $i_1<_Fi_2$ per the definition of the flag associated to an arrival sequence. In other words, presuming that $p$ does not vanish on $F'$, we have that $i_1<_{F'}i_2$ implies $i_1<_Fi_2$. Noting that it remains possible for $i_1\sim_{F'}i_2$ and $i_1<_Fi_2$, we see that the flag $F$ is a subdivision of the flag $F'$, as desired.
\end{proof}

We conjecture that these higher-order blow-up spaces for scalar fields can be generalized to differential forms in a way that preserves the key properties of the finite element exterior calculus spaces.

\begin{conjecture}
  On a simplex $T$, the shadow forms / blow-up Whitney form complex $b\cP_1^-\Lambda^k$ and the higher-order blow-up scalar fields $b\cP_r\Lambda^0$ can be generalized to a differential complex $b\cP_r^-\Lambda^k$ that is exact except at $k=0$.

  Moreover, these spaces can be \emph{blow-up geometrically decomposed} in the sense of \cite{arnold2009geometric}, that is, for $r\ge1$ we can construct bases where each $k$-form in the basis has a unique minimal face on which it does not vanish, but now this condition is about faces of the blow-up $\tl T$, not $T$.

  Moreover, for each flag/face $F$ of $\tl T$, consider the space $b\cP_r^-\Lambda^k(F)$ of the restriction of the full space to that face, and consider its subspace $b\mr{\cP}_r^-\Lambda^k(F)$ of forms that vanish on $\partial F$. Then the trace-vanishing blow-up spaces $b\mr{\cP}_r^-\Lambda^k(F)$ are isomorphic to the corresponding classical spaces $\mr{\cP}_r^-\Lambda^k(F)$. Here, the $\mr{\cP}_r^-\Lambda^k(F)$ are defined using the isomorphism $\mr F\cong\prod_j\mr T_j$ (see Proposition~\ref{prop:faces}), so the classical FEEC spaces $\mr{\cP}_r^-\Lambda^k(T_j)$ yield $\mr{\cP}_r^-\Lambda^k(F)$ via the tensor product construction \cite{arnold2015tensor}.
\end{conjecture}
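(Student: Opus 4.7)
The plan is to imitate the $r=1$ development, using the Poisson process and blow-up perspectives as organizing principles, with adjustments appropriate to higher order. First, to extend Section~\ref{sec:blowupWhitney} to higher order, I would define the basis of $b\cP_r^-\Lambda^k$ by attaching a $k$-form to each degree $r$ arrival experiment whose associated flag $F$ has $n-k+1$ partition subsets. Specifically, given such an experiment with probability $p$, the candidate basis form is $p \cdot \omega_F$, where $\omega_F = \brho^{-F}\phi_F$ is the homogenized product of Whitney forms from Section~\ref{sec:blowupWhitney}. When $r=1$, each experiment silences one source per round, the flag is completely determined by the arrival order, and we recover exactly the shadow forms $\psi_F$; when $k=0$, every flag has $n+1$ subsets and $\omega_F = 1$, recovering $b\cP_r\Lambda^0$. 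Linear independence and a dimension count matching the $k$-cells of $\tilde T$ (decorated by appropriate polynomial data on each product factor $T_j$) would verify that this is a basis.

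Next, to establish the complex property and exactness, I would run the Stokes's-theorem argument of Theorem~\ref{thm:d} on a higher-order arrival-time variety generalizing $A_F$ in Definition~\ref{def:af}. Each component of the boundary corresponding to merging adjacent partition subsets $V_{j-1}, V_j$ produces a term in $d\psi$; components where a ``decorating'' extra particle hits a singular time should vanish by construction, and components at $\rho_{n-k}=0$ or $t_{n-k}\to\infty$ vanish as before. For exactness, I would then pull back to $\tilde T$: as in the $r=1$ case, the probability factor $p$ is a polynomial in the smooth blow-up coordinates $\rho_K^H$, so each $p\,\omega_F$ lifts to a form smooth on $\tilde T$. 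The degrees of freedom should be integration of higher moments against appropriate test forms over the $k$-faces of $\tilde T$. Since each such face is a product $\prod_j \tilde T_j$ of blow-up simplices (Proposition~\ref{prop:faces}), one obtains an isomorphism to a tensor-product cellular complex built from classical FEEC complexes on the $T_j$; exactness then follows from classical FEEC exactness on each factor plus contractibility of $\tilde T$, exactly as in Section~\ref{sec:cohomology}.

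The blow-up geometric decomposition follows almost for free: the minimal-face principle already proved for $b\cP_r\Lambda^0$ generalizes directly, because the argument only uses that a basis element's probability vanishes whenever the Poisson rate degeneration forces an impossible arrival pattern. The unique minimal flag $F'$ containing $F$ is still characterized by the weak ordering $i_1 <_F i_2$ encoded in the arrival sequence. For the final claim, the isomorphism $\mr F \cong \prod_j \mr T_j$ from Proposition~\ref{prop:faces} reduces the identification $b\mr{\cP}_r^-\Lambda^k(F) \cong \mr{\cP}_r^-\Lambda^k(F)$ to showing that the blow-up basis forms supported in the interior of $F$ (those whose arrival-sequence flag equals $F$) restrict to the tensor product basis of $\bigotimes_j \mr{\cP}_r^-\Lambda^{k_j}(T_j)$ over compositions $\sum_j k_j = k$. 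This should follow from directly inspecting the arrival sequences associated with $F$: within each subset $V_j$, the pattern of receiving and silencing particles in rounds of $r$ is combinatorially independent of the analogous choice for other $j$, matching the tensor product structure of \cite{arnold2015tensor}.

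The main obstacle is Step 1, namely the correct definition of the basis. For $k=0$, the arrival experiment is specified; for $k>0$, one must decide how to ``decorate'' the arrival sequence so that the resulting $k$-forms have the right dimension count, the right behavior under $d$, the right restrictions to faces of $\tilde T$, and the right specialization at $r=1$ and $k=0$. The candidate $p\,\omega_F$ is the simplest guess and is forced by the $r=1$ case, but it is not obvious a priori that this gives enough forms at higher $r$ and higher $k$ to match the expected dimensions, nor that $d$ closes on the span. Getting this definition right --- perhaps by allowing arrival experiments with additional structure such as distinguished particles indexing extra Whitney factors --- is the crux on which the whole program depends.
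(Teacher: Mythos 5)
This statement is a \emph{conjecture} in the paper; the authors offer no proof, only the scalar-case preliminaries ($b\cP_r\Lambda^0$, its containment of $\cP_r\Lambda^0$, and the minimal-face property) together with heuristics about Poisson processes and the blow-up $\tl T$ --- heuristics your proposal largely echoes. So the relevant question is whether your sketch closes the gap, and it does not: it is a research program rather than a proof, as you yourself concede in your final paragraph.

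Beyond the admitted incompleteness, the one concrete definitional choice you do make is already inconsistent with the known $r=1$ case. You propose to index the basis of $b\cP_r^-\Lambda^k$ by degree-$r$ arrival experiments whose associated flag has $n-k+1$ subsets, with basis form $p\,\omega_F$. But in the paper's degree-$r$ arrival experiment, each round silences exactly the sources that emitted among the first $r$ particles of that round; at $r=1$ every round silences exactly one source, so every associated flag has $n+1$ singleton subsets and only $k=0$ is attainable. Your construction therefore yields $b\cP_1^-\Lambda^k=0$ for $k\ge1$, rather than recovering the shadow $k$-forms. The actual $r=1$ construction of $\psi_F$ for $k\ge1$ (Section~\ref{sec:poisson}) uses a \emph{different} probabilistic setup: an ensemble of $n+1-k$ processes with rates $\rho_j=\sum_{i\in V_j}\lambda_i$, waiting for the $\abs{V_j}$th arrival of each. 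Reconciling that grouped-process construction with the round-based degree-$r$ experiment is precisely the unresolved crux, and your candidate does not do it. Everything downstream --- the Stokes argument on an undefined higher-order arrival-time variety, unisolvence for unspecified higher moments, exactness via the cellular complex of $\tl T$, and the tensor-product identification of the trace-vanishing spaces --- is contingent on a definition that has not been correctly pinned down. Note also that even the $k=0$ ingredient you build on (linear independence of the degree-$r$ probability functions) is itself only conjectured in the paper.
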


One important note about the above conjecture is that, whereas the faces of $T$ are all simplices, the faces of $\tl T$, or, more precisely, their interiors, are \emph{products} of simplices. Products of simplices include products of intervals, namely cubes. So, as a welcome side effect, this conjecture may also yield to a unified perspective that encompasses finite element exterior calculus not only on simplicial meshes but also on \emph{cubical} meshes.

\section*{Acknowledgments}
Evan Gawlik was supported by NSF grants DMS-2012427 and DMS-2411208 and the Simons Foundation award MPS-TSM-00002615.  Yakov Berchenko-Kogan was supported by NSF grant DMS-2411209. We would also like to thank the anonymous referees for their feedback.

\printbibliography

\end{document}